\pdfoutput=1
\documentclass[reqno]{amsart}
\usepackage{enumerate}
\usepackage{tabto}
\usepackage[mathscr]{euscript}
\usepackage{xcolor}
\usepackage{layout}
\usepackage{fancyhdr}
\usepackage{array}
\usepackage{amsfonts}
\usepackage{amsmath}
\usepackage{amssymb}
\usepackage{mathtools}
\usepackage{graphicx}
\usepackage{bm}
\usepackage{enumitem}
\usepackage{caption}
\usepackage{color}
\usepackage{csquotes}
\usepackage{bookmark}
\usepackage{float}
\usepackage{multirow}
\usepackage[square,numbers,sort&compress]{natbib}
\usepackage{hyperref}
\hypersetup{colorlinks=true,linkcolor=blue,citecolor=red}
\allowdisplaybreaks

\def\Xint#1{\mathchoice
{\XXint\displaystyle\textstyle{#1}}%
{\XXint\textstyle\scriptstyle{#1}}%
{\XXint\scriptstyle\scriptscriptstyle{#1}}%
{\XXint\scriptscriptstyle\scriptscriptstyle{#1}}%
\!\int}
\def\XXint#1#2#3{{\setbox0=\hbox{$#1{#2#3}{\int}$ }
\vcenter{\hbox{$#2#3$ }}\kern-.6\wd0}}

\def\dashint{\Xint-}

\newtheorem{theorem}{Theorem}[section]
\newtheorem{lemma}[theorem]{Lemma}

\newtheorem{remark}[theorem]{Remark}
\theoremstyle{definition}
\newtheorem{definition}[theorem]{Definition}

\numberwithin{equation}{section}

\newcommand{ \mr }{ \mathbb{R} }

\newcommand{\iints}[1]{{\int\hspace{-0.28cm}\int_{#1}}}
\newcommand{\iintss}{{\int\hspace{-0.28cm}\int}}
\newcommand{ \miints }{{\iintss\hspace{-0.56cm} -\hspace{-0.15cm}-}}
\newcommand{\miint}[1]{{\miints_{\hspace{-0.13cm}#1}}}

\begin{document}
\title[The H\"older interpolative gap bound]{The H\"older interpolative gap bound for degenerate parabolic double phase problems}

\author{Jehan Oh}\address{Department of Mathematics, Kyungpook National University, Daegu, 41566, Republic of Korea} \email{jehan.oh@knu.ac.kr}

\subjclass[2020]{Primary 35B65; Secondary 35K65, 35K55, 35D30}
\date{\today.}
\keywords{higher integrability, gradient estimates, $(p,q)$-growth, intrinsic scaling, H\"older continuous solutions}

\begin{abstract}
We study weak solutions to degenerate parabolic double phase equations with growth exponents $2\leq p<q$ and a modulating coefficient that is H\"older continuous with exponent $\alpha$. If the solution itself is H\"older continuous with exponent $\gamma$, we prove higher integrability of its gradient under the gap condition $q\leq p+\alpha/(1-\gamma)$ together with $q<p+1$. This is the first gap bound of H\"older interpolative type for parabolic double phase problems, and it is the parabolic counterpart of the corresponding bound for elliptic problems. For $\alpha<1$ it allows exponents beyond the range $q\leq p+\alpha$ known for bounded solutions.
\end{abstract}
\maketitle

\section{\bf Introduction}\label{section 1}
We study parabolic double phase equations of the form
\begin{equation}    \label{eq : main equation}
    u_t - \operatorname{div} \mathcal{A}(z,Du)=0 \qquad \text{in} \ \, \Omega_T\coloneq \Omega\times (0,T),
\end{equation}
where $\Omega \subset \mr^n$ ($n\geq 2$) is a bounded open set and $\mathcal{A}:\Omega_T \times \mr^{n}\rightarrow \mr^{n}$ is a Carath\'{e}odory vector field satisfying the following coercivity and growth bounds: there exist constants $0<\nu\leq L <\infty$ such that
\begin{equation}    \label{cond : double phase bounded condition of integrand}
    \mathcal{A}(z,\xi)\cdot \xi\geq \nu (|\xi|^p+a(z)|\xi|^q)\quad \text{and}\quad |\mathcal{A}(z,\xi)|\leq L(|\xi|^{p-1}+a(z)|\xi|^{q-1})
\end{equation}
for all $z \in \Omega_T$ and $\xi \in \mr^n$, where $2\leq p <q <\infty$.
We note that the corresponding model equation is
\begin{equation}    \label{eq : model equation}
u_t-\operatorname{div} \left(|Du|^{p-2}Du + a(z)|Du|^{q-2}Du\right)=0 \qquad \text{in} \ \, \Omega_T.
\end{equation}

The parabolic equation \eqref{eq : model equation} is the evolutionary counterpart of the elliptic double phase functional
$$
W^{1,1}(\Omega) \ni w \mapsto \mathcal{P}(w,\Omega)\coloneq \int_\Omega \left[\frac{1}{p}|Dw|^p+\frac{1}{q}a(x)|Dw|^q\right] dx,
$$
where $1<p\leq q$ and $0\leq a(\cdot)\in C^{\alpha}(\Omega)$ for some $\alpha\in(0,1]$. The elliptic double phase model was originally introduced in \cite{Zhikov1986,Zhikov1993,Zhikov1995,Zhikov1997} as an example exhibiting the Lavrentiev phenomenon and illustrating homogenization in strongly anisotropic materials, and variants of the double phase problem arise naturally in applied sciences, including transonic flows \cite{Bahrouni2019}, quantum physics \cite{Benci2000}, stationary reaction-diffusion systems \cite{cherfils2005stationary}, image denoising and processing \cite{Kbiri2014,Charkaoui2024,Chen2006,Harjulehto2013,Harjulehto2021,Fang2010}, and heat diffusion in materials with heterogeneous thermal properties \cite{Arora2023}.

For the elliptic problem, which falls into the class of problems with $(p,q)$-growth initiated in the seminal papers of Marcellini \cite{Marcellini1989,Marcellini1991}, regularity of weak solutions is governed by conditions linking the closeness of $p$ and $q$ with the H\"{o}lder exponent $\alpha$ of the modulating coefficient $a(\cdot)$, see \cite{Colombo2015a,Mingione2021}. Under the gap bound condition
\begin{equation}\label{cond : dimension gap bound condition in elliptic double phase problem}
\frac{q}{p}\leq 1+\frac{\alpha}{n},
\end{equation}
it has been shown in \cite{Baroni2018,Colombo2015,Esposito2004} that a weak solution $u$ and its gradient $Du$ are H\"{o}lder continuous. When $u$ satisfies
\begin{equation}\label{cond : bounded gap bound condition in elliptic double phase problem}
u\in L^\infty(\Omega)\quad \text{and}\quad q\leq p +\alpha,
\end{equation}
the same conclusion has been established in \cite{Baroni2018,Colombo2015a}. Moreover, Baroni-Colombo-Mingione \cite{Baroni2018} have proved that if
\begin{equation}\label{cond : Holder gap bound condition in elliptic double phase problem}
u\in C^{0,\gamma}(\Omega) \quad \text{and}\quad q<p +\frac{\alpha}{1-\gamma}\ \, \text{with}\ \, \gamma\in(0,1),
\end{equation}
then the gradient of $u$ is H\"{o}lder continuous. The inequality in this gradient regularity result is strict, while the non-strict bound appears in the result on the absence of the Lavrentiev phenomenon in \cite[Theorem 4]{Baroni2018}. This shows that by imposing stronger regularity assumptions on $u$, one may relax the gap bound condition while still ensuring regularity results for $u$. In \cite{Baroni2018} this interpolative type phenomenon was in fact conjectured to be a general principle for problems with $(p,q)$-growth. In the same spirit, an interpolation of the gap bounds with respect to integrability assumptions $u\in L^\gamma_{\operatorname{loc}}(\Omega)$ was obtained in \cite{Ok2020}. Furthermore, under either \eqref{cond : dimension gap bound condition in elliptic double phase problem} or \eqref{cond : bounded gap bound condition in elliptic double phase problem}, various regularity results have been established, including Harnack inequalities and H\"older continuity \cite{Baroni2015,Ok2017,Ok2020} as well as Calder\'{o}n-Zygmund type estimates \cite{Baasandorj2020,DeFilippis2019,Colombo2016}; see also \cite{Byun2021,Byun2017,Byun2020,Byun2021a,Haestoe2022,Haestoe2022a}.

In the parabolic setting, Kim-Kinnunen-Moring \cite{2023_Gradient_Higher_Integrability_for_Degenerate_Parabolic_Double-Phase_Systems} and Kim-Kinnunen-S\"{a}rki\"{o} \cite{Wontae2023a} established gradient higher integrability and energy estimates for weak solutions under the conditions
\begin{equation}\label{cond : s=2 gap bound condition in parabolic double phase problem}
    u\in C(0,T;L^2(\Omega)),\quad a(\cdot) \in C^{\alpha,\frac{\alpha}{2}}(\Omega_T) \quad \text{and}\quad q\leq p + \frac{2\alpha}{n+2}.
\end{equation}
Here, $a(\cdot) \in C^{\alpha,\frac{\alpha}{2}}(\Omega_T)$ means that $a(\cdot)\in L^\infty (\Omega_T)$ and there exists a H\"{o}lder constant $[a]_\alpha\coloneq [a]_{\alpha,\frac{\alpha}{2};\Omega_T}>0$ such that
$$
|a(x_1,t_1)-a(x_2,t_2)|\leq [a]_\alpha \max\left\{|x_1-x_2|^\alpha,|t_1-t_2|^\frac{\alpha}{2}\right\}
$$
for all $x_1,\,x_2\in\Omega$ and $t_1,\,t_2\in(0,T)$. The existence theory of weak solutions to the above problem is addressed in \cite{Chlebicks2019}, with further discussions in \cite{Wontae2023a,Singer2016}. Kim \cite{Wontae2023b} and Kim-S\"{a}rki\"{o} \cite{Wontae2024} have investigated higher integrability results and Calder\'{o}n-Zygmund type estimates for singular parabolic double phase systems, Buryachenko-Skrypnik \cite{Buryachenko2022} have established local continuity and Harnack's inequality for the parabolic double phase equations, and regularity results for parabolic multi-phase problems have been studied in \cite{Kim2024,Kim2025,Sen2025}.

Recently, in the companion work of Kim and the author \cite{KimOh2026}, the gradient higher integrability result was proved for bounded weak solutions under the dimensionless gap condition
\begin{equation}\label{cond : bounded gap bound condition in parabolic double phase problem}
u\in L^\infty(\Omega_T)\quad \text{and}\quad q\leq p+\alpha,
\end{equation}
together with the interpolative family of conditions $u\in C(0,T;L^s(\Omega))$ and $q\leq p+\frac{s\alpha}{n+s}$ for $s\in[2,\infty)$, which connects \eqref{cond : s=2 gap bound condition in parabolic double phase problem} and \eqref{cond : bounded gap bound condition in parabolic double phase problem}. Chlebicka-Garain-Kim \cite{Chlebicka2025} obtained the same range for bounded solutions to parabolic double phase systems. We also mention that Kim-Moring-S\"{a}rki\"{o} \cite{Wontae2025} have established that, under \eqref{cond : bounded gap bound condition in parabolic double phase problem}, bounded weak solutions are locally H\"{o}lder continuous.

In contrast with the elliptic theory, all gap bounds available in the parabolic setting so far are of integrability type: the conditions of \cite{2023_Gradient_Higher_Integrability_for_Degenerate_Parabolic_Double-Phase_Systems,Wontae2023a,KimOh2026,Chlebicka2025} interpolate between $q\leq p+\frac{2\alpha}{n+2}$ and the dimensionless bound $q\leq p+\alpha$, and no gap condition in which the admissible range of $q$ grows with the regularity of the solution beyond boundedness has been known. In particular, the H\"older interpolative bound \eqref{cond : Holder gap bound condition in elliptic double phase problem} had no parabolic counterpart. This is more than a gap in the literature: already in \cite[Remark 2]{Baroni2018} the parabolic equation \eqref{eq : model equation} was singled out as posing non-trivial additional difficulties, as it generates new double phase intrinsic geometries for which new methods must be developed.

The purpose of this paper is to establish the first gap bound of H\"older interpolative type in the parabolic setting. We prove that if
$$
u\in C^{\gamma,\frac{\gamma}{2}}(\Omega_T) \quad \text{and}\quad q\leq p+\frac{\alpha}{1-\gamma}\ \, \text{with}\ \, \gamma\in(0,1),
$$
together with the restriction $q<p+1$ discussed in Remark \ref{rem : role of q<p+1}, then the gradient of a weak solution to \eqref{eq : main equation} enjoys higher integrability. To the best of our knowledge, this is the first gap bound condition for the parabolic double phase problem that exploits H\"older continuity of the solution. We therefore regard the identification of the H\"older interpolative gap bound under the parabolic intrinsic geometries, rather than the higher integrability estimate itself, as the main novelty of this paper: it confirms that the interpolation principle conjectured in \cite{Baroni2018} persists in the parabolic setting, and it extends the parabolic family of gap bounds initiated in \cite{2023_Gradient_Higher_Integrability_for_Degenerate_Parabolic_Double-Phase_Systems,KimOh2026}, see Remark \ref{rem : ladder}. Higher integrability is here the natural first regularity result to establish under the new condition, being the pivotal starting point of the double phase theory both in the elliptic case \cite{Esposito2004} and in the parabolic case \cite{2023_Gradient_Higher_Integrability_for_Degenerate_Parabolic_Double-Phase_Systems,KimOh2026}.

We say that $u\in C^{\gamma,\frac{\gamma}{2}}(\Omega_T)$ for $\gamma\in(0,1)$ if $u\in L^\infty(\Omega_T)$ and there exists a H\"older constant $[u]_\gamma\coloneq [u]_{\gamma,\frac{\gamma}{2};\Omega_T}>0$ such that
$$
|u(x_1,t_1)-u(x_2,t_2)|\leq [u]_\gamma \max\left\{|x_1-x_2|^\gamma,|t_1-t_2|^\frac{\gamma}{2}\right\}
$$
for all $x_1,\,x_2\in\Omega$ and $t_1,\,t_2\in(0,T)$, and we write $\|u\|_{C^{\gamma,\frac{\gamma}{2}}(\Omega_T)}\coloneq \|u\|_{L^\infty(\Omega_T)}+[u]_\gamma$.

We now introduce the definition of a weak solution for the parabolic double phase problem.
\begin{definition}
    A function $u:\Omega_T\rightarrow \mr$ with
    $$
    u\in C(0,T;L^2(\Omega))\cap L^q(0,T;W^{1,q}(\Omega))
    $$
    is a weak solution to \eqref{eq : main equation} if
    $$
    \iints{\Omega_T} -u\cdot \varphi_t +\mathcal{A}(z,Du)\cdot D\varphi\, dz=0
    $$
    for every $\varphi\in C_0^\infty(\Omega_T)$.
\end{definition}

The theorem below uses the weak-solution class in the preceding definition, including the assumption $u\in L^q(0,T;W^{1,q}(\Omega))$. The natural energy class would instead require
$$
u\in C(0,T;L^2(\Omega))\cap L^1(0,T;W^{1,1}(\Omega)),\qquad
\iints{\Omega_T} (|Du|^p+a(z)|Du|^q)\,dz<\infty.
$$
Parabolic Lipschitz truncation provides energy estimates in this class under the gap assumptions of \cite{Wontae2023a}. Those assumptions do not cover the entire H\"older interpolative range considered here. Extending the present theorem to all solutions in the natural energy class requires a separate justification of the energy estimates in that wider range and is not asserted in this paper.

Throughout this paper we assume that the non-negative coefficient function $a:\Omega_T\rightarrow \mr^+$ and a weak solution $u$ satisfy
\begin{equation}    \label{cond : main assumption with gamma}
    u\in C^{\gamma,\frac{\gamma}{2}}(\Omega_T),\quad a(\cdot) \in C^{\alpha,\frac{\alpha}{2}}(\Omega_T),\quad q\leq p+\frac{\alpha}{1-\gamma}\quad \text{and}\quad q<p+1
\end{equation}
for some $\gamma\in(0,1)$ and $\alpha\in(0,1]$. In what follows, $Q_r(z_0)\coloneq B_r(x_0)\times (t_0-r^2,t_0+r^2)$ denotes a parabolic cylinder, where $z_0=(x_0,t_0)$. For simplicity, we write the collection of parameters as
$$
\operatorname{data}_\gamma \coloneq (n,p,q,\alpha,\gamma,\nu,L,\operatorname{diam}(\Omega),[a]_\alpha,\|u\|_{C^{\gamma,\frac{\gamma}{2}}(\Omega_T)}),
$$
and we denote $H(z,\varkappa) \coloneq \varkappa^p +a(z)\varkappa^q$ for $\varkappa\geq 0$ and $z\in\Omega_T$.

Our main result is the following gradient higher integrability theorem.
\begin{theorem}\label{thm : main theorem with gamma}
    Assume that \eqref{cond : main assumption with gamma} is satisfied and let $u$ be a weak solution to \eqref{eq : main equation} with \eqref{cond : double phase bounded condition of integrand}. Then there exist constants $\varepsilon_0=\varepsilon_0(\operatorname{data}_\gamma)>0$ and $c=c(\operatorname{data}_\gamma,$ $\|a\|_{L^\infty(\Omega_T)})>1$ such that
    $$
    \begin{aligned}
        &\miint{Q_r(z_0)} H(z,|Du|)^{1+\varepsilon}\, dz\leq c\left(\miint{Q_{2r}(z_0)} H(z,|Du|)\,dz\right)^{1+\frac{\varepsilon q}{2}}+c
    \end{aligned}
    $$
    for every $Q_{2r}(z_0)\subset \Omega_T$ and $\varepsilon\in (0,\varepsilon_0)$.
\end{theorem}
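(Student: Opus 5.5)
The plan is to follow the scheme of Kim--Kinnunen--Moring and of \cite{KimOh2026}: intrinsic cylinders adapted to the two phases, reverse H\"older inequalities on each of them, a stopping-time/Vitali covering argument, and finally a Gehring-type lemma. The only place where H\"older continuity of $u$ enters is the comparison of the two phases, i.e.\ the estimate for the oscillation of $a$ on an intrinsic cylinder. Everything else is a careful repetition of the existing machinery.

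\textbf{Step 1: Caccioppoli and Poincar\'e on intrinsic cylinders.} For $\lambda\ge 1$ I will use two families of cylinders. The $p$-intrinsic cylinders are
$$Q_\rho^\lambda(z_0)=B_\rho(x_0)\times(t_0-\lambda^{2-p}\rho^2,t_0+\lambda^{2-p}\rho^2).$$
The $(p,q)$-intrinsic cylinders $G_\rho^\lambda$ have time scale $\lambda^2\rho^2/H(z_0,\lambda)$. On these I prove energy estimates of the form
$$\sup_t\dashint \frac{|u-(u)|^2}{\lambda^{2-p}\rho^2}+\miint{} H(z,|Du|)\le c\miint{}\Big[H\Big(z,\tfrac{|u-(u)|}{\rho}\Big)+\dots\Big],$$
together with gluing-lemma parabolic Poincar\'e inequalities. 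These are derived from the weak formulation using Steklov averages; this is where $u\in L^q(W^{1,q})$ is used.

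\textbf{Step 2: phase separation using H\"older continuity.} This is the step I expect to be the main obstacle. In the $p$-phase, where $a(z_0)\le K\lambda^{p-q}$, I must bound the term $\sup_{Q} a\,(|u-(u)|/\rho)^q$ by $c\lambda^p$. Write
$$a\le a(z_0)+[a]_\alpha\rho^\alpha \quad\text{and}\quad |u-(u)|\le c[u]_\gamma\rho^{\gamma}$$
on $Q_\rho^\lambda$. The time side length is $\lambda^{(2-p)/2}\rho\le\rho$ because $p\ge2$, so the H\"older bound applies there too. Then
$$\rho^\alpha\big(|u-(u)|/\rho\big)^{q}\le \rho^{\alpha}\big(|u-(u)|/\rho\big)^{p}\,(c\rho^{\gamma-1})^{q-p}=c\rho^{\alpha-(1-\gamma)(q-p)}\big(|u-(u)|/\rho\big)^p ,$$
and the exponent $\alpha-(1-\gamma)(q-p)$ is nonnegative exactly when $q\le p+\alpha/(1-\gamma)$. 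This replaces the boundedness interpolation of \cite{KimOh2026}.

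For the stopping-time condition I also need the comparability $a(z)\approx a(z_0)$ on $(p,q)$-cylinders, i.e.\ $[a]_\alpha\rho^\alpha\lesssim a(z_0)$ whenever $a(z_0)\ge K\lambda^{p-q}$. That requires bounding $\rho$ in terms of $\lambda$. From the stopping time, $\lambda^p\lesssim\miint{}H(z,|Du|)$. The energy estimate plus H\"older continuity gives $\miint{}|Du|^p\lesssim\rho^{(\gamma-1)p}\times$(time ratio). Hence $\rho^{1-\gamma}\lesssim\lambda^{-1}$ roughly, so $\rho^\alpha\lesssim\lambda^{-\alpha/(1-\gamma)}\le\lambda^{p-q}$. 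The time scaling $\lambda^{2-p}$ spoils this clean inequality, and the extra condition $q<p+1$ is what I expect to absorb the discrepancy. Specifically, it is needed when the parabolic H\"older estimate in time, applied on cylinders of length $\lambda^{2-p}\rho^2$, is converted to spatial scale. I would first try to do this with the bound $|u-(u)|\le c\rho^\gamma$ directly inside the Caccioppoli inequality, taking time-slice sup terms into account. If the exponents then fail, I would fall back on a case split $\lambda\le\rho^{-1}$ versus $\lambda>\rho^{-1}$, using $q<p+1$ in the second case.

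\textbf{Step 3: reverse H\"older, covering and Gehring.} With Steps 1--2, on each stopping cylinder I obtain
$$\miint{} H(z,|Du|)\le c\Big(\miint{} H(z,|Du|)^{\theta}\Big)^{1/\theta}+c$$
for some $\theta<1$. A Vitali covering of the super-level sets $\{H(z,|Du|)>\Lambda\}$, with $\Lambda\gtrsim(\miint{Q_{2r}}H)^{q/2}$ coming from the scaling deficit, yields a level-set inequality. Multiplying it by $\Lambda^{\varepsilon-1}$ and integrating (Fubini) then gives the stated estimate. The exponent $1+\varepsilon q/2$ arises from that scaling choice, and the additive $c$ absorbs the cases $\lambda\le1$ and the $\|a\|_\infty$ dependence.
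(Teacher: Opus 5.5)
Your overall architecture matches the paper's, and two of your ingredients are exactly the ones used there. The first is the oscillation bound $|u-u_Q|\le 2[u]_\gamma\rho^\gamma$ on intrinsic cylinders, which holds because their time length is at most $2\rho^2$ when $\lambda\ge1$ and $p\ge2$. The second is the factorisation $\rho^\alpha(|u-u_Q|/\rho)^q\le c\rho^{\alpha-(1-\gamma)(q-p)}(|u-u_Q|/\rho)^p$ for the H\"older remainder of $a$ in the $p$-phase.

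The gap is in the one genuinely new step, the exclusion of the degenerate phase. You leave it at ``roughly'', and the mechanism you propose for it is wrong.

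First, $\rho\lesssim\lambda^{-1/(1-\gamma)}$ is not available unconditionally from Caccioppoli. The stopping level $\lambda^p=\miint{Q^\lambda_\rho(w)}H(z,|Du|)\,dz$ contains $a|Du|^q$, so Caccioppoli produces $\miint{Q^\lambda_{2\rho}(w)}a\,|u-u_Q|^q/\rho^q\,dz$. Without smallness of $a$, this only gives $\lambda^p\lesssim\|a\|_\infty\rho^{-(1-\gamma)q}$, which is too weak. You must argue by contradiction, assuming that comparability fails: $\sup_{Q_{10\rho}(w)}a\le 4[a]_\alpha(10\rho)^\alpha$. Then that term is at most $c\rho^{\alpha-(1-\gamma)(q-p)}\rho^{-(1-\gamma)p}\le c\rho^{-(1-\gamma)p}$, by $q\le p+\alpha/(1-\gamma)$ and $\rho\le\operatorname{diam}(\Omega)$.

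Second, the time scaling spoils nothing. The time term of Caccioppoli on $Q^\lambda_{2\rho}(w)$ is $c\lambda^{p-2}\rho^{-2(1-\gamma)}$. Young's inequality with exponents $\frac{p}{p-2},\frac p2$ bounds it by $\frac12\lambda^p+c\rho^{-(1-\gamma)p}$, and the $\frac12\lambda^p$ is absorbed. Hence $\rho\le c_\gamma\lambda^{-1/(1-\gamma)}$ with $c_\gamma=c_\gamma(\operatorname{data}_\gamma)$ independent of $K$. No case split $\lambda\le\rho^{-1}$ versus $\lambda>\rho^{-1}$ is needed, and $q<p+1$ is not used.

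The contradiction then reads
$K\lambda^p\le\sup a\,\lambda^q\le40[a]_\alpha\rho^\alpha\lambda^q\le40c_\gamma[a]_\alpha\lambda^{q-\alpha/(1-\gamma)}\le40c_\gamma[a]_\alpha\lambda^p$, using $\lambda\ge1$. So $K$ must be fixed larger than $40c_\gamma[a]_\alpha$. This is the only place where the new gap bound enters the phase analysis, and it makes $K$, hence $\kappa$ and all covering constants, depend on $\operatorname{data}_\gamma$.

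Because of this misdiagnosis, the hypothesis $q<p+1$ also sits in the wrong place in your plan. In the paper it is needed only in the $(p,q)$-intrinsic Poincar\'e inequality. There the gluing-lemma drift inserts $\lambda^{2-p}\big(\miint{G^\lambda_\sigma(z_0)}|Du|^{q-1}dz\big)^{(p-1)/(q-1)}$ into $H_{z_0}$. Its unweighted $p$-component can be controlled by $\miint{G^\lambda_\sigma(z_0)}|Du|^{\theta p}dz$ only through H\"older's inequality with exponent $\theta p/(q-1)\ge1$. Since the reverse H\"older inequality and the Gehring step need $\theta<1$, this forces $q<p+1$. Your Step 3 asserts a reverse H\"older inequality with some $\theta<1$ in both phases without this restriction, so its $(p,q)$-phase part is not justified as stated.

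A smaller point: in the final Gehring step you should truncate before absorbing. Work with $\iints{Q_s(z_0)}f\min\{f,N\}^\varepsilon\,dz$, then absorb the term $\frac{C\varepsilon}{1-\theta}$ times the same quantity on a larger cylinder via the iteration lemma, and let $N\to\infty$. This is needed because $H(z,|Du|)\in L^{1+\varepsilon}$ is not known a priori.
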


\begin{remark}\label{rem : ladder}
    The admissible range in Theorem \ref{thm : main theorem with gamma} is
    $$
    0<q-p\leq\frac{\alpha}{1-\gamma},\qquad q-p<1.
    $$
    For $0<\alpha<1$, this contains the bounded-solution range $q\leq p+\alpha$ and permits additional exponents $q>p+\alpha$. For $\alpha=1$, however, the restriction $q<p+1$ prevents any enlargement and excludes the endpoint $q=p+1$ allowed by the bounded-solution result. As $\gamma\searrow0$, the numerical bound tends to $p+\alpha$. This is only a formal comparison, since the theorem still assumes H\"older continuity of $u$.

    Thus the family of parabolic bounds progresses from $q\leq p+\frac{2\alpha}{n+2}$ for $u\in C(0,T;L^2)$, through $q\leq p+\frac{s\alpha}{n+s}$ for $u\in C(0,T;L^s)$ and $q\leq p+\alpha$ for bounded solutions, to the above H\"older interpolative range, with its additional cap $q<p+1$. The quantity $p+\alpha/(1-\gamma)$ agrees with the elliptic interpolative threshold in \eqref{cond : Holder gap bound condition in elliptic double phase problem}. The cited elliptic gradient H\"older theorem requires a strict inequality, whereas our higher integrability estimate also allows equality at this threshold when $\alpha/(1-\gamma)<1$.
\end{remark}

\begin{remark}\label{rem : role of q<p+1}
    Let us comment on the last condition $q<p+1$ in \eqref{cond : main assumption with gamma}. It is needed at exactly one point of the proof: in the $(p,q)$-intrinsic parabolic Poincar\'{e} inequalities of Section \ref{section 4}, the mean drift coming from the gluing lemma is inserted into the structure function $H_{z_0}$, and the unweighted $p$-component of the resulting expression carries the integral average of $|Du|^{q-1}$, which can be absorbed into the $\theta$-reduced energy only through H\"{o}lder's inequality with exponent $\frac{\theta p}{q-1}\geq 1$. This forces $\theta\geq\frac{q-1}{p}$, while the reverse H\"{o}lder inequalities of Section \ref{section 5} require $\theta<1$, whence $q<p+1$. On the other hand, the $p$-phase estimates of Section \ref{section 4} hold for every $\theta\in(\frac{q-1}{q},1]$ and do not require this condition. We emphasize that the $(p,q)$-phase estimates do not involve the assumption \eqref{cond : main assumption with gamma}$_1$ on $u$, so that the condition $q<p+1$ is inherited from the underlying machinery of \cite{2023_Gradient_Higher_Integrability_for_Degenerate_Parabolic_Double-Phase_Systems,KimOh2026} rather than caused by the H\"{o}lder continuity assumption. In \cite{2023_Gradient_Higher_Integrability_for_Degenerate_Parabolic_Double-Phase_Systems} it holds automatically, since $q\leq p+\frac{2\alpha}{n+2}<p+1$. In the elliptic case \cite{Baroni2018} no such restriction appears. We do not know whether it can be removed in the parabolic setting.
\end{remark}

Let us briefly describe the strategy of the proof and the main new ingredients. In \cite{Baroni2018} the elliptic result under \eqref{cond : Holder gap bound condition in elliptic double phase problem} is obtained through a blow-up argument relying on a nonlinear harmonic type approximation lemma, and no parabolic version of this machinery adapted to the double phase intrinsic geometries is available at present. We follow instead the intrinsic scaling approach of \cite{2023_Gradient_Higher_Integrability_for_Degenerate_Parabolic_Double-Phase_Systems} in the version developed in \cite{KimOh2026}, which builds on the intrinsic geometry of DiBenedetto \cite{1993_Degenerate_parabolic_equations_DiBenedetto} and its use in the gradient higher integrability results of \cite{Acerbi2004,Kinnunen2000}: in the stopping time argument of Section \ref{section 3}, we distinguish the $p$-intrinsic and $(p,q)$-intrinsic cases by imposing
$$
K\lambda^p\geq \sup a(\cdot) \lambda^q \quad \text{and}\quad K\lambda^p \leq \sup a(\cdot) \lambda^q,
$$
respectively, for some $K>1$, and we must exclude the borderline third case in which $K\lambda^p\leq \sup a(\cdot)\lambda^q$ while $\sup a(\cdot)\lesssim \rho^\alpha$. The key observation of the present paper is that on every intrinsic cylinder appearing in this scheme the time scale is at most $\rho^2$, since $\lambda\geq 1$ and $p\geq 2$, so that the H\"older continuity of $u$ yields the oscillation estimate
$$
\sup_{z\in Q^\lambda_\rho(z_0)}\Big|u(z)-u_{Q^\lambda_\rho(z_0)}\Big|\leq 2[u]_\gamma \rho^\gamma,
$$
see Lemma \ref{lem : oscillation on intrinsic cylinders}. Wherever the argument for bounded solutions in \cite{KimOh2026} invokes $\|u\|_{L^\infty(\Omega_T)}$, we invoke this oscillation bound instead and gain a factor $\rho^\gamma$ for each factor of $u$. In the exclusion of the third case this leads to the estimate
$$
\varrho_w\leq c\,\lambda_w^{-\frac{1}{1-\gamma}},
$$
which improves the decay $\varrho_w\lesssim \lambda_w^{-1}$ of the bounded case and is compatible with the gap condition $q\leq p+\frac{\alpha}{1-\gamma}$, see Lemma \ref{lem : no occurence with gamma}. In the reverse H\"{o}lder inequalities of Section \ref{section 5}, the same oscillation bound handles the term carrying the factor $\rho^\alpha$ directly, so that, in contrast to the interpolative $L^s$-track of \cite{KimOh2026}, no Gagliardo-Nirenberg argument with case distinctions in $s$ is needed for this term. The remaining machinery, that is, the parabolic Sobolev-Poincar\'{e} inequalities of Section \ref{section 4}, the $(p,q)$-phase estimates and the Vitali covering argument of Section \ref{section 6}, does not involve the assumption on $u$ and is taken from \cite{2023_Gradient_Higher_Integrability_for_Degenerate_Parabolic_Double-Phase_Systems,KimOh2026}.

The paper is organized as follows. In Section \ref{section 2}, we fix notation, prove the oscillation estimate on intrinsic cylinders and recall three energy lemmas from \cite{2023_Gradient_Higher_Integrability_for_Degenerate_Parabolic_Double-Phase_Systems}. Section \ref{section 3} contains the stopping time argument and shows that the third case never occurs. In Section \ref{section 4}, we collect parabolic Sobolev-Poincar\'{e} type inequalities in the $p$-phase and the $(p,q)$-phase, and Section \ref{section 5} is devoted to reverse H\"{o}lder inequalities in both phases. Finally, Section \ref{section 6} contains a Vitali type covering argument and the proof of Theorem \ref{thm : main theorem with gamma}.

\section{\bf Preliminaries}\label{section 2}
We first set up notation.
For a fixed point $z_0 \in \Omega_T$, we denote
\begin{equation}\label{def : definition of H with a fixed center z_0}
    H_{z_0}(\varkappa)\coloneq \varkappa^p+a(z_0)\varkappa^q \qquad \text{for } \varkappa\geq 0.
\end{equation}
We set intrinsic cylinders
\begin{equation}\label{eq : definition of p-intrinsic cylinder}
Q_\rho^\lambda(z_0)\coloneq  B_\rho(x_0)\times(t_0-\lambda^{2-p}\rho^2,t_0+\lambda^{2-p}\rho^2)\eqcolon B_\rho(x_0)\times I_{\rho}^\lambda(t_0)
\end{equation}
and
\begin{equation}\label{eq : definition of p,q-intrinsic cylinder}
G_\rho^\lambda(z_0)\coloneq  B_\rho(x_0)\times\left(t_0-\frac{\lambda^2}{H_{z_0}(\lambda)}\rho^2,t_0+\frac{\lambda^2}{H_{z_0}(\lambda)}\rho^2\right)\eqcolon B_\rho(x_0)\times J_\rho^\lambda(t_0).
\end{equation}
Next, we denote the super-level set by
\begin{equation}\label{def : definition of Psi}
    \Psi(\Lambda)\coloneq \{z\in\Omega_T:H(z,|Du(z)|)>\Lambda\}.
\end{equation}

For any measurable set $E\subset \Omega_T$ with $0<|E|<\infty$ and integrable function $f\in L^1(\Omega_T)$, we denote the integral average of $f$ over $E$ by
$$
f_E=\frac{1}{|E|}\iints{E} f\, dz=\miint{E} f \, dz,
$$
where $|E|$ means $(n+1)$-dimensional Lebesgue measure of the set $E$.

The following elementary oscillation estimate on intrinsic cylinders is the main new ingredient of this paper. Note that the time length of $Q^\lambda_\rho(z_0)$ is $2\lambda^{2-p}\rho^2$, which does not exceed $2\rho^2$ whenever $\lambda\geq 1$, since $p\geq 2$. The same is true for $G^\lambda_\rho(z_0)$, since $H_{z_0}(\lambda)\geq \lambda^p$ implies $J^\lambda_\rho(t_0)\subset I^\lambda_\rho(t_0)$.
\begin{lemma}\label{lem : oscillation on intrinsic cylinders}
    Let $u\in C^{\gamma,\frac{\gamma}{2}}(\Omega_T)$ for some $\gamma\in(0,1)$ and let $\lambda\geq 1$. Then, for every intrinsic cylinder $Q^\lambda_\rho(z_0)\subset \Omega_T$ as in \eqref{eq : definition of p-intrinsic cylinder}, we have
    $$
    \sup_{z\in Q^\lambda_\rho(z_0)}\Big|u(z)-u_{Q^\lambda_\rho(z_0)}\Big|\leq 2[u]_\gamma\, \rho^\gamma.
    $$
    The same estimate holds with $G^\lambda_\rho(z_0)$ in place of $Q^\lambda_\rho(z_0)$.
\end{lemma}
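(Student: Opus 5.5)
The plan is to bound the oscillation of $u$ over the cylinder pointwise and then pass to the average. First compare two points of $Q^\lambda_\rho(z_0)$: for $z_1=(x_1,t_1)$ and $z_2=(x_2,t_2)$ in the cylinder we have $|x_1-x_2|<2\rho$ and $|t_1-t_2|<2\lambda^{2-p}\rho^2$. Since $\lambda\geq 1$ and $p\geq 2$, we have $\lambda^{2-p}\leq 1$, so $|t_1-t_2|<2\rho^2$. The H\"older seminorm assumption then gives
$$
|u(z_1)-u(z_2)|\leq [u]_\gamma\max\left\{(2\rho)^\gamma,(2\rho^2)^{\frac{\gamma}{2}}\right\}=[u]_\gamma\max\left\{2^\gamma,2^{\frac{\gamma}{2}}\right\}\rho^\gamma\leq 2[u]_\gamma\rho^\gamma,
$$
where the last step uses $2^\gamma\leq 2$ for $\gamma\in(0,1)$.

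To pass from pointwise differences to the deviation from the mean, fix $z\in Q^\lambda_\rho(z_0)$ and write $u(z)-u_{Q^\lambda_\rho(z_0)}$ as the average over $w\in Q^\lambda_\rho(z_0)$ of $u(z)-u(w)$. Bounding the integrand by the estimate above gives $|u(z)-u_{Q^\lambda_\rho(z_0)}|\leq 2[u]_\gamma\rho^\gamma$ for every $z$, and taking the supremum yields the claim. The average is well defined because $u\in L^\infty(\Omega_T)$, and the cylinder is contained in $\Omega_T$, so the seminorm applies to all pairs of points.

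For $G^\lambda_\rho(z_0)$ the argument is identical once we note that $H_{z_0}(\lambda)\geq\lambda^p$. This gives $\lambda^2/H_{z_0}(\lambda)\leq\lambda^{2-p}\leq 1$, so again $J^\lambda_\rho(t_0)\subset I^\lambda_\rho(t_0)$, any two times differ by less than $2\rho^2$, and the spatial diameter is still at most $2\rho$.

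There is no genuine obstacle here; the only point needing care is the constant. The parabolic time scaling makes the time contribution $(2\rho^2)^{\gamma/2}=2^{\gamma/2}\rho^\gamma$, so both the spatial and the temporal terms are controlled by $2\rho^\gamma$.
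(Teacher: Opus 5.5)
Your proposal is correct and follows essentially the same argument as the paper. You bound pairwise differences using $\lambda^{2-p}\leq 1$ and the constant $\max\{2^\gamma,2^{\gamma/2}\}\leq 2$, then average over the second point. For $G^\lambda_\rho(z_0)$ you use the same inclusion $J^\lambda_\rho(t_0)\subset I^\lambda_\rho(t_0)$, which follows from $H_{z_0}(\lambda)\geq\lambda^p$.
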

\begin{proof}
    Let $z_1=(x_1,t_1),\,z_2=(x_2,t_2)\in Q_\rho^\lambda(z_0)$. Then $|x_1-x_2|<2\rho$ and, since $\lambda\geq 1$ and $p\geq 2$,
    $$
    |t_1-t_2|<2\lambda^{2-p}\rho^2\leq 2\rho^2.
    $$
    Hence
    $$
    |u(z_1)-u(z_2)|\leq [u]_\gamma \max\left\{(2\rho)^\gamma, (2\rho^2)^{\frac{\gamma}{2}}\right\}=[u]_\gamma \max\left\{2^\gamma,2^{\frac{\gamma}{2}}\right\}\rho^\gamma\leq 2[u]_\gamma \rho^\gamma.
    $$
    Averaging over $z_2\in Q^\lambda_\rho(z_0)$ yields the claim. Since $J^\lambda_\rho(t_0)\subset I^\lambda_\rho(t_0)$, the same argument applies to $G^\lambda_\rho(z_0)$.
\end{proof}

To prove the main theorem, we refer to three energy lemmas from \cite{2023_Gradient_Higher_Integrability_for_Degenerate_Parabolic_Double-Phase_Systems}. The following lemma provides a Caccioppoli type inequality, see \cite[Lemma 2.3]{2023_Gradient_Higher_Integrability_for_Degenerate_Parabolic_Double-Phase_Systems}.
\begin{lemma}\label{lem : Caccioppoli inequality}
    Let $u$ be a weak solution to \eqref{eq : main equation}. Then there exists a positive constant $c=c(n,p,q,\nu,L)$ such that
    $$
    \begin{aligned}
        &\sup_{t\in (t_0-\tau,t_0+\tau)}\dashint_{B_r(x_0)} \frac{|u-u_{Q_{r,\tau}(z_0)}|^2}{\tau}\, dx+\miint{Q_{r,\tau}(z_0)} H(z,|Du|)\, dz\\
        &\quad \leq c \miint{Q_{R, \ell}\left(z_0\right)}\left[\frac{\left|u-u_{Q_{R, \ell}\left(z_0\right)}\right|^p}{(R-r)^p}+a(z) \frac{\left|u-u_{Q_{R, \ell}\left(z_0\right)}\right|^q}{(R-r)^q}\right] dz \\
        &\qquad+c \miint{Q_{R, \ell}\left(z_0\right)} \frac{\left|u-u_{Q_{R, \ell}\left(z_0\right)}\right|^2}{\ell-\tau} \, dz
    \end{aligned}
    $$
    for every $Q_{R, \ell}\left(z_0\right)=B_R(x_0) \times(t_0-\ell, t_0+\ell) \subset \Omega_T$, with $R, \ell>0,\, r \in[R / 2, R)$ and $\tau \in [\ell / 2^2, \ell )$.
\end{lemma}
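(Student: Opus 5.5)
The statement just given is the Caccioppoli inequality of Lemma~\ref{lem : Caccioppoli inequality}, quoted from \cite[Lemma 2.3]{2023_Gradient_Higher_Integrability_for_Degenerate_Parabolic_Double-Phase_Systems}. The plan is to reproduce the standard energy argument: test the weak formulation with $\varphi=(u-u_{Q_{R,\ell}(z_0)})\eta^q\zeta$. Here $\eta\in C_0^\infty(B_R(x_0))$ is a spatial cutoff with $\eta\equiv1$ on $B_r(x_0)$ and $|D\eta|\le 2/(R-r)$. The function $\zeta$ is a piecewise linear time cutoff. It vanishes for $t\le t_0-\ell$, equals $1$ on $(t_0-\tau,t_0+\tau)$, and satisfies $|\zeta'|\le 1/(\ell-\tau)$. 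It is further truncated at a time $s\in(t_0-\tau,t_0+\tau)$ so that the $\sup$ in time can be extracted.

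Since $u$ is only weak in time, the test function is not admissible directly. I would therefore work with Steklov averages $[u]_h$, or equivalently an exponential time mollification. The time term $\int [u]_{h,t}\,\varphi$ is then integrated by parts, and one passes to $h\to0$. This passage uses $u\in C(0,T;L^2)\cap L^q(0,T;W^{1,q})$, which guarantees that $\mathcal A(z,Du)\in L^{q'}$ and that all terms converge. The outcome is the identity
\[
\tfrac12\int_{B_R}|u-u_Q|^2\eta^q\zeta(\cdot,s)\,dx+\iint \mathcal A(z,Du)\cdot Du\,\eta^q\zeta
=\tfrac12\iint|u-u_Q|^2\eta^q\zeta_t-q\iint \eta^{q-1}\zeta(u-u_Q)\,\mathcal A(z,Du)\cdot D\eta .
\]

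Next I would estimate each side using \eqref{cond : double phase bounded condition of integrand}. The coercivity bound makes the left-hand side at least $\nu\iint H(z,|Du|)\eta^q\zeta$. The first term on the right is bounded by $\frac{1}{\ell-\tau}\iint_{Q_{R,\ell}}|u-u_Q|^2$. In the last term the growth bound yields two pieces, $|Du|^{p-1}\eta^{q-1}|u-u_Q|/(R-r)$ and $a(z)|Du|^{q-1}\eta^{q-1}|u-u_Q|/(R-r)$. To each I apply Young's inequality with a small parameter, using exponents $(p/(p-1),p)$ and $(q/(q-1),q)$ respectively. Because $\eta^{q-1}\le\eta^{q(p-1)/p}$, the gradient parts become $\epsilon\eta^q|Du|^p$ and $\epsilon a\eta^q|Du|^q$, which are absorbed into the left-hand side. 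What remains is $|u-u_Q|^p/(R-r)^p+a|u-u_Q|^q/(R-r)^q$, with a constant depending only on $n,p,q,\nu,L$.

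Finally I take the supremum over $s$ and add it to the estimate with $s=t_0+\tau$. Restricting the left-hand side to $Q_{r,\tau}$ and dividing by $|Q_{r,\tau}|\simeq 2^{-(n+2)}|Q_{R,\ell}|$ converts all integrals into averages; this step uses $r\ge R/2$ and $\tau\ge\ell/4$. The sup term becomes $\sup_t\dashint_{B_r}|u-u_Q|^2/\tau$ after dividing by $|B_r|\cdot 2\tau$. On the left the sup term carries the mean $u_{Q_{R,\ell}}$ rather than $u_{Q_{r,\tau}}$. To pass to $u_{Q_{r,\tau}}$ I would not use the minimizing property of the mean, since the mean of the slice at time $t$ is not the right object. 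Instead I would note that for each $t$, $\dashint_{B_r}|u-u_{Q_{r,\tau}}|^2\le 2\dashint_{B_r}|u-u_{Q_R}|^2+2|u_{Q_r}-u_{Q_R}|^2$. The constant term is controlled by $\miint{Q_{r,\tau}}|u-u_{Q_R}|^2$, which in turn is at most the sup bound. The only delicate point is the rigorous time regularization; the remaining steps are routine.
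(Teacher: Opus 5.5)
Your proposal is correct. You test with $(u-u_{Q_{R,\ell}(z_0)})\eta^q\zeta$, regularized in time by Steklov averages, and absorb the gradient terms by Young's inequality using $\eta^{q-1}\le\eta^{q(p-1)/p}$. You then shift the mean via $|u_{Q_{r,\tau}(z_0)}-u_{Q_{R,\ell}(z_0)}|^2\le\sup_t\dashint_{B_r(x_0)}|u-u_{Q_{R,\ell}(z_0)}|^2\,dx$, and this is the standard energy argument. The paper gives no proof of its own and simply quotes \cite[Lemma 2.3]{2023_Gradient_Higher_Integrability_for_Degenerate_Parabolic_Double-Phase_Systems}, which rests on this standard argument.
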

The next lemma is a gluing lemma, see \cite[Lemma 2.4]{2023_Gradient_Higher_Integrability_for_Degenerate_Parabolic_Double-Phase_Systems}. For this, we denote the spatial integral average of $u$ over $B_R(x_0)$ by
$$
(u)_{B_R(x_0)}(t)=\dashint_{B_R(x_0)} u(x,t)\, dx.
$$
\begin{lemma}\label{lem : gluing lemma}
    Let $u$ be a weak solution to \eqref{eq : main equation}, and let $\eta \in C_0^{\infty}(B_R(x_0))$ be a function such that
    $$
    \eta \geq 0, \quad \dashint_{B_R(x_0)} \eta \, dx=1 \quad \text { and } \quad \|\eta\|_{L^{\infty}}+R\|D \eta\|_{L^{\infty}} \leq c(n).
    $$
    Then there exists a positive constant $c=c(n, L)$ such that
\begin{multline*}
    \sup_{t_1, t_2 \in (t_0-\ell, t_0+\ell)} |(u \eta)_{B_R(x_0)}(t_2)-(u \eta)_{B_R(x_0)}(t_1)| \\
    \leq c \frac{\ell}{R} \miint{Q_{R, \ell}(z_0)}\left[|D u|^{p-1}+a(z)|D u|^{q-1}\right] dz
\end{multline*}
    for every $Q_{R, \ell}(z_0) \subset \Omega_T$ with $R, \ell>0$.
\end{lemma}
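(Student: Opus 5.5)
The gluing lemma is quoted from \cite[Lemma 2.4]{2023_Gradient_Higher_Integrability_for_Degenerate_Parabolic_Double-Phase_Systems}, so strictly it can be invoked directly. The plan below reconstructs its proof by testing the weak formulation with a function that isolates the time-dependence of the weighted spatial mean $(u\eta)_{B_R(x_0)}(t)$.

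Fix $t_1<t_2$ in $(t_0-\ell,t_0+\ell)$. For small $h>0$, let $\chi_h$ be the piecewise linear function of time that equals $1$ on $[t_1+h,t_2-h]$, vanishes outside $(t_1,t_2)$, and is linear in between. Take the test function $\varphi(x,t)=\eta(x)\chi_h(t)$. It is Lipschitz with compact support in $Q_{R,\ell}(z_0)$, so it is admissible after a standard mollification in time. This is legitimate because $u\in C(0,T;L^2(\Omega))$ and $\mathcal{A}(z,Du)\in L^1$, the latter following from the growth bound in \eqref{cond : double phase bounded condition of integrand} and $u\in L^q(0,T;W^{1,q}(\Omega))$. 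The weak formulation then gives
$$
\frac1h\int_{t_2-h}^{t_2}\!\!\int_{B_R(x_0)} u\eta\,dx\,dt-\frac1h\int_{t_1}^{t_1+h}\!\!\int_{B_R(x_0)} u\eta\,dx\,dt=-\iints{Q_{R,\ell}(z_0)}\mathcal{A}(z,Du)\cdot D\eta\,\chi_h\,dz .
$$
Since $u\in C(0,T;L^2)$, the map $t\mapsto\int_{B_R(x_0)} u\eta\,dx$ is continuous. Letting $h\to0$, the left-hand side tends to $|B_R|\big((u\eta)_{B_R(x_0)}(t_2)-(u\eta)_{B_R(x_0)}(t_1)\big)$.

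For the right-hand side I will use $0\le\chi_h\le1$, the bound $|D\eta|\le c(n)/R$, and the growth bound in \eqref{cond : double phase bounded condition of integrand}. This yields
$$
\left|\iints{Q_{R,\ell}(z_0)}\mathcal{A}(z,Du)\cdot D\eta\,\chi_h\,dz\right|\le \frac{c(n)L}{R}\iints{Q_{R,\ell}(z_0)}\big[|Du|^{p-1}+a(z)|Du|^{q-1}\big]dz.
$$
Dividing by $|B_R|$ and writing $|Q_{R,\ell}|=2\ell|B_R|$ converts the integral into $2\ell$ times the mean over $Q_{R,\ell}(z_0)$. This produces the stated factor $c\,\ell/R$ with $c=c(n,L)$. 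Taking the supremum over $t_1,t_2$ completes the argument.

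The only delicate step is the admissibility of the test function. The definition uses $C_0^\infty$ test functions, whereas $\chi_h$ is merely Lipschitz in time. I would handle this by approximating $\chi_h$ with smooth cutoffs $\chi_{h,\delta}$, noting that $\partial_t\chi_{h,\delta}$ converges to the difference of the two averaging kernels. The time-continuity of $u$ into $L^2$ then justifies passing to the limit in the term $\iint u\,\eta\,\partial_t\chi$.
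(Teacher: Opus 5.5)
Your proof is correct and follows the standard argument behind this lemma. The paper does not prove it but cites \cite[Lemma 2.4]{2023_Gradient_Higher_Integrability_for_Degenerate_Parabolic_Double-Phase_Systems}, whose proof also tests the equation with $\eta(x)\chi_h(t)$ for a piecewise linear time cutoff, lets $h\to0$ using $u\in C(0,T;L^2(\Omega))$, and bounds the flux term via $R\|D\eta\|_{L^\infty}\le c(n)$ and the growth condition (your constant $2c(n)L$ is right).
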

The above lemma yields a parabolic Poincar\'{e} inequality, see \cite[Lemma 2.5]{2023_Gradient_Higher_Integrability_for_Degenerate_Parabolic_Double-Phase_Systems}.
\begin{lemma}\label{lem : semi-Parabolic Poincare inequality}
    Let $u$ be a weak solution to \eqref{eq : main equation}. Then there exists a positive constant $c=c(n, m, L)$ such that
\begin{multline*}
        \miint{Q_{R, \ell}(z_0)} \frac{|u-u_{Q_{R, \ell}(z_0)}|^{\theta m}}{R^{\theta m}} \,dz \\
        \leq c \miint{Q_{R, \ell}(z_0)}|D u|^{\theta m} \,dz+c\left(\frac{\ell}{R^2} \miint{Q_{R, \ell}\left(z_0\right)} \left[|D u|^{p-1}+a(z)|D u|^{q-1}\right] d z\right)^{\theta m}
\end{multline*}
    for every $Q_{R, \ell}(z_0)=B_R(x_0) \times(t_0-\ell, t_0+\ell) \subset \Omega_T$ with $R, \ell>0, m \in(1, q]$ and $\theta \in(1 / m, 1]$.
\end{lemma}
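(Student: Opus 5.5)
The plan is to replace the mean value $u_{Q_{R,\ell}(z_0)}$ by a more convenient constant, and then to split the deviation of $u$ from that constant into a spatial part and a temporal part. The spatial part is handled slice by slice by the Poincar\'e inequality on $B_R(x_0)$. The temporal part is handled by the gluing lemma, Lemma \ref{lem : gluing lemma}.

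\emph{Step 1: reduction to a weighted constant.} Fix $\eta$ as in Lemma \ref{lem : gluing lemma} and set $I=(t_0-\ell,t_0+\ell)$. Define
$$
c_0\coloneq \dashint_{I}(u\eta)_{B_R(x_0)}(t)\,dt .
$$
For any constant $k$ and any exponent $s\geq 1$ we have
$$
\miint{Q_{R,\ell}(z_0)}|u-u_{Q_{R,\ell}(z_0)}|^s\,dz\leq 2^s\miint{Q_{R,\ell}(z_0)}|u-k|^s\,dz,
$$
by the triangle inequality and Jensen's inequality. Here $s=\theta m>1$, since $\theta>1/m$. It therefore suffices to estimate the left-hand side of the lemma with $u_{Q_{R,\ell}(z_0)}$ replaced by $c_0$. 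Pointwise we split
$$
|u(x,t)-c_0|\leq |u(x,t)-(u\eta)_{B_R(x_0)}(t)|+\sup_{t_1,t_2\in I}|(u\eta)_{B_R(x_0)}(t_2)-(u\eta)_{B_R(x_0)}(t_1)| .
$$

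\emph{Step 2: spatial term.} For a.e.\ $t$ we have $u(\cdot,t)\in W^{1,q}(B_R(x_0))$ by the assumption $u\in L^q(0,T;W^{1,q}(\Omega))$. Since $\dashint_{B_R}\eta\,dx=1$ and $\|\eta\|_{L^\infty}\leq c(n)$,
$$
|(u)_{B_R}(t)-(u\eta)_{B_R}(t)|=\Big|\dashint_{B_R}(u-(u)_{B_R}(t))\eta\,dx\Big|\leq c(n)\dashint_{B_R}|u-(u)_{B_R}(t)|\,dx .
$$
Combining this with the triangle inequality, H\"older's inequality and the Poincar\'e inequality on $B_R(x_0)$ with exponent $\theta m\in(1,q]$ gives
$$
\dashint_{B_R}\frac{|u-(u\eta)_{B_R}(t)|^{\theta m}}{R^{\theta m}}\,dx\leq c\dashint_{B_R}|Du(\cdot,t)|^{\theta m}\,dx .
$$
The constant $c$ depends only on $n$ and $m$, uniformly in $\theta$, because $\theta m$ ranges over the bounded interval $(1,m]$. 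Averaging over $t\in I$ produces the first term on the right-hand side of the lemma.

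\emph{Step 3: temporal term.} By Lemma \ref{lem : gluing lemma}, the supremum in Step 1 is bounded by
$$
c(n,L)\,\frac{\ell}{R}\miint{Q_{R,\ell}(z_0)}\big[|Du|^{p-1}+a(z)|Du|^{q-1}\big]\,dz .
$$
Dividing by $R$ and raising to the power $\theta m$ gives exactly the second term of the lemma. Since this bound is independent of $(x,t)$, its average over $Q_{R,\ell}(z_0)$ is the same quantity.

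Collecting Steps 1--3, with constants depending only on $n,m,L$, yields the claim. The only point requiring care is the uniformity of the Poincar\'e constant with respect to the exponent $\theta m\in(1,m]$ in Step 2. The rest is a routine triangle-inequality argument, and the weight $\eta$ appears solely because the gluing lemma controls weighted slice means rather than plain ones.
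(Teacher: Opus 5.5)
Your proof is correct and is essentially the standard argument behind this lemma. The paper does not prove it but imports it from \cite[Lemma 2.5]{2023_Gradient_Higher_Integrability_for_Degenerate_Parabolic_Double-Phase_Systems}, whose argument is the same three moves: replace the mean by the time average of the weighted slice means $(u\eta)_{B_R(x_0)}(t)$, control the spatial deviation slicewise by Poincar\'e's inequality, and bound the temporal oscillation of the weighted means by Lemma \ref{lem : gluing lemma}, with constants tracked uniformly in $\theta m\in(1,m]$ to give $c=c(n,m,L)$.
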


\section{\bf Stopping time argument}\label{section 3}
We put
\begin{equation}\label{def : lambda_0 and Lambda_0}
\lambda_0^2\coloneq \miint{Q_{2r}(z_0)} \left[H(z,|Du|)+1\right] dz\quad \text{and}\quad \Lambda_0\coloneq \lambda_0^p+\sup_{Q_{2r}(z_0)} a(\cdot)\lambda_0^q,
\end{equation}
where $Q_{2r}(z_0)=B_{2r}(x_0)\times (t_0-(2r)^2,t_0+(2r)^2)$. Let
\begin{equation}\label{def : definition of K and kappa}
    K\coloneq 1+80c_\gamma[a]_\alpha \quad \text{and}\quad \kappa\coloneq 10K,
\end{equation}
where $c_\gamma$ will be defined in Lemma \ref{lem : no occurence with gamma}.
For $\Psi(\Lambda)$ as in \eqref{def : definition of Psi} and $\varrho\in [r,2r]$, we write
$$
\Psi(\Lambda,\varrho)\coloneq  \Psi(\Lambda)\cap Q_\varrho(z_0)=\{z\in Q_\varrho(z_0):H(z,|Du(z)|)>\Lambda\}.
$$

We now apply a stopping time argument. Let $r\leq r_1<r_2\leq 2r$ and
$$
\Lambda>\left(\frac{4\kappa r}{r_2-r_1}\right)^{\frac{q(n+2)}{2}}\Lambda_0,
$$
where $\kappa$ is as in \eqref{def : definition of K and kappa}. For any parabolic Lebesgue point $w\in \Psi(\Lambda,r_1)$ of $H(\cdot,|Du|)$, we choose $\lambda_w>0$ such that
\begin{equation}\label{cond : Lambda=H(lambda)}
\Lambda=\lambda_w^p+a(w)\lambda_w^q=H_w(\lambda_w),
\end{equation}
where $H_{w}$ denotes the function defined in \eqref{def : definition of H with a fixed center z_0} with $z_0$ replaced by $w$. We note that, since $H_w$ is increasing and
$$
H_w(\lambda_w)=\Lambda>\Lambda_0\geq \lambda_0^p+a(w)\lambda_0^q=H_w(\lambda_0),
$$
we have
\begin{equation}\label{cond : lambda_w geq 1}
    \lambda_w\geq \lambda_0\geq 1,
\end{equation}
so that Lemma \ref{lem : oscillation on intrinsic cylinders} is available on every intrinsic cylinder with scaling parameter $\lambda_w$. All super-level sets in the stopping-time and covering arguments may be restricted to these Lebesgue points, since the omitted set has measure zero. Put $d=r_2-r_1$. The choice of $\Lambda$ and the inequality
$$
H_w(t\lambda_0)\leq t^q\Lambda_0\qquad(t\geq1)
$$
give $\lambda_w>(4\kappa r/d)^{(n+2)/2}\lambda_0$. Since $Q_s^{\lambda_w}(w)\subset Q_{2r}(z_0)$ for $s\leq d$, for $d/(2\kappa)\leq s\leq d$ we have
$$
\miint{Q_s^{\lambda_w}(w)}H(z,|Du|)\,dz
\leq \lambda_w^{p-2}\left(\frac{2r}{s}\right)^{n+2}\lambda_0^2
<\lambda_w^p.
$$
At a Lebesgue point $w\in\Psi(\Lambda,r_1)$ the small-radius limit is larger than $\Lambda\geq\lambda_w^p$. Continuity of the integral average in the radius therefore gives a largest crossing radius $\varrho_w\in(0,d/(2\kappa))$ such that
\begin{equation}\label{cond : integral of H in =varrho}
\miint{Q_{\varrho_w}^{\lambda_w}(w)} H(z,|Du|)\, dz =\lambda_w^p
\end{equation}
and
\begin{equation}\label{cond : integral of H in >varrho}
\miint{Q_{\varrho}^{\lambda_w}(w)} H(z,|Du|)\, dz <\lambda_w^p
\end{equation}
for any $\varrho\in(\varrho_w,r_2-r_1)$. Moreover, we obtain from \cite[Subsection 5.1]{2023_Gradient_Higher_Integrability_for_Degenerate_Parabolic_Double-Phase_Systems} that
\begin{equation}\label{cond : relation of lambda_xi and lambda_0}
    \lambda_w\leq \left(\frac{2r}{\varrho_w}\right)^\frac{n+2}{2}\lambda_0.
\end{equation}

For $K>1$ as in \eqref{def : definition of K and kappa}, we consider the following three cases:
\begin{enumerate}
    \item\label{case : p-phase} $\displaystyle K\lambda_w^p\geq \sup_{Q_{10\varrho_w}(w)}a(\cdot)\lambda_w^q$,
    \item\label{case : p,q-phase} $\displaystyle K\lambda_w^p\leq \sup_{Q_{10\varrho_w}(w)}a(\cdot)\lambda_w^q\qquad$ and $\qquad\displaystyle \sup_{Q_{10\varrho_w}(w)}a(\cdot)\geq 4[a]_\alpha (10\varrho_w)^\alpha$,
    \item\label{case : no occurence} $\displaystyle K\lambda_w^p\leq \sup_{Q_{10\varrho_w}(w)}a(\cdot)\lambda_w^q\qquad$ and $\qquad\displaystyle \sup_{Q_{10\varrho_w}(w)}a(\cdot)\leq 4[a]_\alpha (10\varrho_w)^\alpha$.
\end{enumerate}
If the first two cases overlap, we assign $w$ to the $p$-phase.
\textbf{Case \eqref{case : p-phase}}: By using \eqref{cond : integral of H in =varrho} and \eqref{cond : integral of H in >varrho} and replacing the center point $w$, radius $\varrho_w$ and $\lambda_w$ with $z_0$, $\rho$ and $\lambda$, respectively, we obtain
\begin{equation}\label{cond : p-phase condition}
    \left\{\begin{aligned}
        &K\lambda^p\geq \sup_{Q_{10\rho}(z_0)} a(\cdot)\lambda^q,\\
        &\miint{Q_\sigma^\lambda (z_0)} H(z,|Du|)\, dz <\lambda^p \quad \text{for any }\sigma\in(\rho,2\kappa\rho],\\
        &\miint{Q_\rho^\lambda(z_0)} H(z,|Du|)\, dz = \lambda^p.
    \end{aligned}\right.
\end{equation}
\textbf{Case \eqref{case : p,q-phase}}: We obtain from \eqref{case : p,q-phase}$_2$ that
$$
4[a]_\alpha (10\varrho_w)^\alpha\leq \sup_{Q_{10\varrho_w}(w)} a(\cdot)\leq \inf_{Q_{10\varrho_w}(w)}a(\cdot)+2[a]_\alpha (10\varrho_w)^\alpha,
$$
and hence
$$
\sup_{Q_{10\varrho_w}(w)}a(\cdot)\leq \inf_{Q_{10\varrho_w} (w)} a(\cdot) +2[a]_\alpha (10\varrho_w)^\alpha\leq 2\inf_{Q_{10\varrho_w} (w)} a(\cdot).
$$
Therefore, we get
\begin{equation}    \label{cond : comparison in p,q-phase}
    \frac{a(w)}{2}\leq a(\tilde{w})\leq 2a(w) \quad \text{for every } \tilde{w}\in Q_{10\varrho_w} (w).
\end{equation}
It follows from \eqref{case : p,q-phase} and \eqref{cond : comparison in p,q-phase} that $a(w)>0$ and $G_\sigma^{\lambda_w}(w) \varsubsetneq Q_\sigma^{\lambda_w}(w)$. Moreover, $G_s^{\lambda_w}(w)\subset Q_s^{\lambda_w}(w)$ and
$$
\frac{|Q_s^{\lambda_w}(w)|}{|G_s^{\lambda_w}(w)|}=\frac{H_w(\lambda_w)}{\lambda_w^p}.
$$
Thus the average over $G_s^{\lambda_w}(w)$ is at most $H_w(\lambda_w)$ at $s=\varrho_w$ and is strictly smaller for $s\in(\varrho_w,r_2-r_1)$. Taking the largest crossing radius for the $G$-cylinders yields $\varsigma_w\in(0,\varrho_w]$ such that
\begin{equation}\label{cond : integral of H in =varsigma in p,q-phase}
    \miint{G^{\lambda_w}_{\varsigma_w}(w)} H(z,|Du|)\, dz=H_w(\lambda_w)
\end{equation}
and
\begin{equation}\label{cond : integral of H in >varsigma in p,q-phase}
    \miint{G^{\lambda_w}_{\sigma}(w)} H(z,|Du|)\, dz<H_w(\lambda_w)
\end{equation}
for any $\sigma\in(\varsigma_w,r_2-r_1)$. The original phase condition implies
$$
\frac K2\lambda_w^p\leq a(w)\lambda_w^q.
$$
This condition at the center survives the reduction from $\varrho_w$ to $\varsigma_w$, whereas a condition involving the supremum on the smaller cylinder need not do so. Replacing $w$, $\varsigma_w$ and $\lambda_w$ by $z_0$, $\rho$ and $\lambda$, respectively, we therefore obtain
\begin{equation}\label{cond : p,q-phase condition}
    \left\{\begin{aligned}
        &\frac K2\lambda^p\leq a(z_0)\lambda^q,\quad \frac{a(z_0)}{2}\leq a(z)\leq 2a(z_0)\quad \text{for every }z\in Q_{10\rho}(z_0),\\
        &\miint{G_\sigma^\lambda (z_0)} H(z,|Du|)\, dz <H_{z_0}(\lambda) \quad \text{for any }\sigma\in(\rho,2\kappa\rho],\\
        &\miint{G_\rho^\lambda(z_0)} H(z,|Du|)\, dz = H_{z_0}(\lambda).
    \end{aligned}\right.
\end{equation}
\textbf{Case \eqref{case : no occurence}}: We shall rigorously exclude the possibility of this case by proving the estimate
\begin{equation}\label{cond : the impossibility of Case (3)}
    \lambda_w \lesssim \varrho_w^{-(1-\gamma)},
\end{equation}
which improves the corresponding estimate $\lambda_w\lesssim \varrho_w^{-1}$ for bounded solutions in \cite{KimOh2026}.
\begin{lemma}\label{lem : no occurence with gamma}
    Let $u$ be a weak solution to \eqref{eq : main equation}, and suppose that
\begin{equation}\label{cond : no occurence}
    \sup_{Q_{10\varrho_w}(w)}a(\cdot)\leq 4[a]_\alpha (10\varrho_w)^\alpha.
\end{equation}
    If \eqref{cond : main assumption with gamma} holds, then there exists a constant $c_\gamma=c_\gamma(\operatorname{data}_\gamma)>1$ such that
    $$
    \varrho_w \leq c_\gamma\lambda_w^{-\frac{1}{1-\gamma}}.
    $$
\end{lemma}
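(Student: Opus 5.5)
Starting from the stopping-time identity \eqref{cond : integral of H in =varrho}, I will bound $\lambda_w^p$ from above by a quantity decaying in $\varrho_w$. Write $\rho=\varrho_w$, $\lambda=\lambda_w$, $Q_s=Q_s^{\lambda}(w)$.

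\textbf{Step 1: Caccioppoli on the intrinsic cylinder.} Apply Lemma \ref{lem : Caccioppoli inequality} with $r=\rho$, $R=2\rho$, $\tau=\lambda^{2-p}\rho^2$ and $\ell=\lambda^{2-p}(2\rho)^2$. This gives
$$
\lambda^p=\miint{Q_\rho}H(z,|Du|)\,dz\le c\miint{Q_{2\rho}}\left[\frac{|u-u_{Q_{2\rho}}|^p}{\rho^p}+a(z)\frac{|u-u_{Q_{2\rho}}|^q}{\rho^q}+\frac{|u-u_{Q_{2\rho}}|^2}{\lambda^{2-p}\rho^2}\right]dz .
$$
The cylinder $Q_{2\rho}$ lies in $Q_{2r}(z_0)$ because $2\rho<d/\kappa$.

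\textbf{Step 2: oscillation bound.} By Lemma \ref{lem : oscillation on intrinsic cylinders}, which applies since $\lambda\ge1$, we have $|u-u_{Q_{2\rho}}|\le c[u]_\gamma\rho^\gamma$ on $Q_{2\rho}$. By \eqref{cond : no occurence}, $a\le c[a]_\alpha\rho^\alpha$ on $Q_{2\rho}\subset Q_{10\rho}(w)$. Hence
$$
\lambda^p\le c\left[\rho^{-(1-\gamma)p}+\rho^{\alpha-(1-\gamma)q}+\lambda^{p-2}\rho^{-2(1-\gamma)}\right].
$$

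\textbf{Step 3: absorb each term.} I will compare $\lambda$ with $\rho^{-(1-\gamma)}$ and argue by contradiction. Suppose $\lambda\ge M\rho^{-(1-\gamma)}$ with $M$ large.
\begin{itemize}
\item First term: $\rho^{-(1-\gamma)p}\le M^{-p}\lambda^p$.
\item Third term: $\lambda^{p-2}\rho^{-2(1-\gamma)}\le M^{-2}\lambda^p$.
\item Second term: this is where the gap condition enters. Since $q-p\le\alpha/(1-\gamma)$, we get $\alpha-(1-\gamma)q\ge-(1-\gamma)p$. Because $\rho\le 2r\le 2\operatorname{diam}(\Omega)$ is bounded, $\rho^{\alpha-(1-\gamma)q}\le c\,\rho^{-(1-\gamma)p}\le cM^{-p}\lambda^p$, with $c$ depending on $\operatorname{diam}(\Omega)$, which is part of $\operatorname{data}_\gamma$.
\end{itemize}
Choosing $M$ large depending on $\operatorname{data}_\gamma$ gives $\lambda^p\le\frac12\lambda^p$, a contradiction. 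Therefore $\lambda\le M\rho^{-(1-\gamma)}$, that is, $\rho\le M^{1/(1-\gamma)}\lambda^{-1/(1-\gamma)}$. Set $c_\gamma=M^{1/(1-\gamma)}$.

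\textbf{Main obstacle.} The delicate point is the second term: it must scale no worse than the first, and this is exactly what the gap condition guarantees. The other thing to check carefully is that the $\sup$ of $a$ is taken on a cylinder containing $Q_{2\rho}^\lambda(w)$. This holds because the time extent satisfies $\lambda^{2-p}(2\rho)^2\le(10\rho)^2$. The constant $c_\gamma$ must not depend on $K$, since $K$ is defined via $c_\gamma$; the argument above indeed avoids using $K$.
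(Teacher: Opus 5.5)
Your proposal is correct and follows the paper's proof essentially step by step. It uses the same Caccioppoli inequality between $Q^{\lambda_w}_{\varrho_w}(w)$ and $Q^{\lambda_w}_{2\varrho_w}(w)$, the oscillation bound of Lemma \ref{lem : oscillation on intrinsic cylinders}, the bound $a\lesssim\varrho_w^\alpha$ from \eqref{cond : no occurence}, and the gap condition to reduce $\varrho_w^{\alpha-(1-\gamma)q}$ to $\varrho_w^{-(1-\gamma)p}$. The only difference is cosmetic: you absorb the term $\lambda_w^{p-2}\varrho_w^{-2(1-\gamma)}$ by contradiction against $\lambda_w\geq M\varrho_w^{-(1-\gamma)}$ rather than by Young's inequality. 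The two are equivalent, and yours treats $p=2$ and $p>2$ uniformly.
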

\begin{proof}
    By Lemma \ref{lem : Caccioppoli inequality} and \eqref{cond : integral of H in =varrho}, we get
    \begin{align}
    \lambda_w^p&=\miint{Q_{\varrho_w}^{\lambda_w}(w)}H(z,|Du|)\,dz \nonumber\\
    &\leq c\miint{Q_{2\varrho_w}^{\lambda_w}(w)}\left[\frac{\Big|u-u_{Q_{2\varrho_w}^{\lambda_w}(w)}\Big|^p}{(2\varrho_w)^p}+a(z)\frac{\Big|u-u_{Q_{2\varrho_w}^{\lambda_w}(w)}\Big|^q}{(2\varrho_w)^q}\right] dz \nonumber\\
    &\qquad +c\lambda_w^{p-2}\miint{Q_{2\varrho_w}^{\lambda_w}(w)}\frac{\Big|u-u_{Q_{2\varrho_w}^{\lambda_w}(w)}\Big|^2}{(2\varrho_w)^2}\,dz \nonumber\\ \label{eq : Caccioppoli ineq in no occurence with gamma}
    &=\mathrm{I}_1+\mathrm{I}_2+\mathrm{I}_3
    \end{align}
    for some $c=c(n,p,q,\nu,L)>1$. By \eqref{cond : lambda_w geq 1} and Lemma \ref{lem : oscillation on intrinsic cylinders}, we have
    \begin{equation}\label{eq : oscillation estimate in no occurence}
        \Big|u(z)-u_{Q_{2\varrho_w}^{\lambda_w}(w)}\Big|\leq 2[u]_\gamma (2\varrho_w)^\gamma \qquad \text{for every } z\in Q^{\lambda_w}_{2\varrho_w}(w).
    \end{equation}

    We first estimate $\mathrm{I}_1$. By \eqref{eq : oscillation estimate in no occurence}, we get
    $$
    \mathrm{I}_1\leq c[u]_\gamma^p (2\varrho_w)^{(\gamma-1)p}\leq c\varrho_w^{-(1-\gamma)p}
    $$
    for some $c=c(n,p,q,\nu,L,[u]_\gamma)>1$.

    Next, we estimate $\mathrm{I}_2$. Since $\lambda_w\geq 1$ and $p\geq 2$, we have $Q^{\lambda_w}_{2\varrho_w}(w)\subset Q_{10\varrho_w}(w)$. Hence, by \eqref{cond : no occurence}, \eqref{eq : oscillation estimate in no occurence} and \eqref{cond : main assumption with gamma}$_3$, we get
    $$
    \mathrm{I}_2\leq c\varrho_w^\alpha [u]_\gamma^q (2\varrho_w)^{(\gamma-1)q}\leq c\varrho_w^{\alpha-(1-\gamma)q}=c\varrho_w^{\alpha-(1-\gamma)(q-p)}\varrho_w^{-(1-\gamma)p}\leq c\varrho_w^{-(1-\gamma)p}
    $$
    for some $c=c(n,p,q,\alpha,\gamma,\nu,L,\operatorname{diam}(\Omega),[a]_\alpha,[u]_\gamma)>1$, where in the last step we used that $\alpha-(1-\gamma)(q-p)\geq 0$ by \eqref{cond : main assumption with gamma}$_3$ and that $\varrho_w\leq \operatorname{diam}(\Omega)$.

    Finally, we estimate $\mathrm{I}_3$. By \eqref{eq : oscillation estimate in no occurence}, we get
    $$
    \mathrm{I}_3\leq c\lambda_w^{p-2}[u]_\gamma^2 (2\varrho_w)^{2(\gamma-1)}\leq c \lambda_w^{p-2}\varrho_w^{-2(1-\gamma)}.
    $$
    If $p=2$, this already gives $\mathrm{I}_3\leq c\varrho_w^{-(1-\gamma)p}$. If $p>2$, then Young's inequality with exponents $\frac{p}{p-2}$ and $\frac{p}{2}$ yields
    $$
    \mathrm{I}_3\leq \frac{1}{2}\lambda_w^p+c\varrho_w^{-(1-\gamma)p}
    $$
    for some $c=c(n,p,q,\nu,L,[u]_\gamma)>1$.

    Combining the above results with \eqref{eq : Caccioppoli ineq in no occurence with gamma}, we conclude that
    $$
    \lambda_w^p\leq c\varrho_w^{-(1-\gamma)p},
    $$
    and the claim follows by taking the $p$-th root and rearranging.
\end{proof}

Now, we show that the case \eqref{case : no occurence} never occurs. If \eqref{case : no occurence} holds, we have
$$
K\lambda_w^p=\sup_{Q_{10\varrho_w}(w)} a(\cdot)\frac{K\lambda_w^p}{\displaystyle\sup_{Q_{10\varrho_w}(w)} a(\cdot)}\leq 40[a]_\alpha\varrho_w^\alpha \lambda_w^q.
$$
It then follows from Lemma \ref{lem : no occurence with gamma}, \eqref{cond : lambda_w geq 1}, \eqref{cond : main assumption with gamma}$_3$ and \eqref{def : definition of K and kappa} that
$$
K\lambda_w^p\leq 40[a]_\alpha\varrho_w^\alpha \lambda_w^q\leq 40c_\gamma^\alpha[a]_\alpha\lambda_w^{q-\frac{\alpha}{1-\gamma}}\leq 40c_\gamma[a]_\alpha\lambda_w^{p}<\frac{K}{2}\lambda_w^p,
$$
which is a contradiction. Thus, the case \eqref{case : no occurence} can never happen under \eqref{cond : main assumption with gamma}.

\section{\bf Parabolic Sobolev-Poincar\'{e} type inequalities}\label{section 4}
Let $z_0=(x_0,t_0)\in\Psi(\Lambda)$ be a Lebesgue point of $|Du(z)|^p+a(z)|Du(z)|^q$, where $\Lambda$ is defined in Section \ref{section 3}. In this section we collect the parabolic Sobolev-Poincar\'{e} type inequalities for the $p$-phase and the $(p,q)$-phase. We emphasize that none of the statements in this section involves the assumption \eqref{cond : main assumption with gamma}$_1$ on $u$. They depend on $u$ only through the phase conditions \eqref{cond : p-phase condition} and \eqref{cond : p,q-phase condition}. We follow \cite[Section 4]{KimOh2026}, which in turn follows \cite{2023_Gradient_Higher_Integrability_for_Degenerate_Parabolic_Double-Phase_Systems}. We also note that the exponent range $\theta\in(\frac{q-1}{p},1]$ appearing in the $(p,q)$-phase case is nonempty precisely because $q<p+1$, see \eqref{cond : main assumption with gamma}$_4$ and Remark \ref{rem : role of q<p+1}, while the estimates in the $p$-phase case hold for the wider range $\theta\in(\frac{q-1}{q},1]$.

\subsection{\bf The $p$-phase case}
We assume \eqref{cond : p-phase condition} and $\lambda\geq1$. Whenever $\Lambda$ is used below, it denotes $H_{z_0}(\lambda)$, as in the stopping-time construction.
\begin{lemma}   \label{lem : last term estimate in p-intrinsic cylinder}
    Let $u$ be a weak solution to \eqref{eq : main equation} and assume that $Q_{4\rho}^\lambda(z_0)\subset \Omega_T$ satisfies \eqref{cond : p-phase condition}. Then, for $\sigma\in[2\rho,4\rho]$, there exists a positive constant $c=c(\operatorname{data}_\gamma)$ such that
    $$
    \begin{aligned}
        \miint{Q_\sigma^\lambda(z_0)} \left[|Du|^{p-1}+a(z)|Du|^{q-1}\right] dz&\leq \miint{Q_\sigma^\lambda(z_0)} |Du|^{p-1}\, dz\\
        &\quad +c\lambda^{-1+\frac{p}{q}}\miint{Q_\sigma^\lambda(z_0)} a(z)^{\frac{q-1}{q}}|Du|^{q-1}\, dz.
    \end{aligned}
    $$
\end{lemma}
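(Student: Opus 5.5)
The estimate is pointwise in nature, so I would prove it by splitting the coefficient. On $Q_\sigma^\lambda(z_0)$ with $\sigma\le 4\rho$ we have $\lambda\ge1$ and $p\ge2$, so $Q_\sigma^\lambda(z_0)\subset Q_{10\rho}(z_0)$. The first line of \eqref{cond : p-phase condition} then gives
$$
a(z)\le \sup_{Q_{10\rho}(z_0)}a(\cdot)\le K\lambda^{p-q}\qquad\text{for every } z\in Q_\sigma^\lambda(z_0).
$$
Write $a(z)=a(z)^{\frac1q}\,a(z)^{\frac{q-1}{q}}$ and bound only the first factor:
$$
a(z)^{\frac1q}\le K^{\frac1q}\lambda^{\frac{p-q}{q}}=K^{\frac1q}\lambda^{-1+\frac pq}.
$$

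Inserting this into the integrand gives, pointwise on $Q_\sigma^\lambda(z_0)$,
$$
a(z)|Du|^{q-1}\le K^{\frac1q}\lambda^{-1+\frac pq}\,a(z)^{\frac{q-1}{q}}|Du|^{q-1}.
$$
Averaging over $Q_\sigma^\lambda(z_0)$ and adding the untouched term $\miint{Q_\sigma^\lambda(z_0)}|Du|^{p-1}\,dz$ yields the claim with $c=K^{1/q}$. Since $K=1+80c_\gamma[a]_\alpha$ depends only on $\operatorname{data}_\gamma$, so does $c$.

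There is no real obstacle here. The only points to check are the inclusion $Q_\sigma^\lambda(z_0)\subset Q_{10\rho}(z_0)$, which needs $\sigma\le 4\rho<10\rho$ and time length $\lambda^{2-p}\sigma^2\le(10\rho)^2$, and that the exponent bookkeeping $(p-q)/q=-1+p/q$ is right. The stopping-time equalities in \eqref{cond : p-phase condition} and the H\"older continuity of $u$ are not needed.
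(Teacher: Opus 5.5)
Your proposal is correct and follows essentially the same route as the paper: pull out $\sup_{Q_{10\rho}(z_0)} a(\cdot)^{1/q}$ using $Q_\sigma^\lambda(z_0)\subset Q_{10\rho}(z_0)$, then bound it by $K^{1/q}\lambda^{-1+\frac{p}{q}}$ via the first line of \eqref{cond : p-phase condition}. This gives $c=K^{1/q}=c(K)$. You also check the cylinder inclusion explicitly, which the paper leaves implicit.
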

\begin{proof}
    By \eqref{cond : p-phase condition}$_1$, there exists a positive constant $c=c(K)$ such that
    \begin{align*}
        &\miint{Q_\sigma^\lambda(z_0)} \left[|Du|^{p-1}+a(z)|Du|^{q-1}\right] dz\\
        &\quad\quad\leq \miint{Q_\sigma^\lambda(z_0)} |Du|^{p-1}\, dz+\sup_{Q_{10\rho}(z_0)} a(\cdot)^{\frac{1}{q}}\miint{Q_\sigma^\lambda(z_0)} a(z)^{\frac{q-1}{q}}|Du|^{q-1}\, dz\\
        &\quad\quad\leq \miint{Q_\sigma^\lambda(z_0)} |Du|^{p-1}\, dz+ c\lambda^{-1+\frac{p}{q}}\miint{Q_\sigma^\lambda(z_0)} a(z)^{\frac{q-1}{q}}|Du|^{q-1}\, dz.
    \end{align*}
\end{proof}
Next, we establish a $p$-intrinsic parabolic Poincar\'{e} inequality.
\begin{lemma}\label{lem : p-intrinsic parabolic Poincare inequality of p-term in p-intrinsic cylinder}
    Let $u$ be a weak solution to \eqref{eq : main equation} and assume that $Q_{4\rho}^\lambda(z_0)\subset \Omega_T$ satisfies \eqref{cond : p-phase condition}. Then, for $\sigma\in[2\rho,4\rho]$ and $\theta \in (\frac{q-1}{q},1]$, there exists a positive constant $c=c(\operatorname{data}_\gamma)$ such that
    \begin{align*}
        \miint{Q_\sigma^\lambda(z_0)}\frac{\Big|u-u_{Q_\sigma^\lambda(z_0)}\Big|^{\theta p}}{\sigma^{\theta p}}\, dz\leq c\miint{Q_\sigma^\lambda(z_0)} H(z,|Du|)^\theta\, dz.
    \end{align*}
\end{lemma}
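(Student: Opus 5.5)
We start from Lemma \ref{lem : semi-Parabolic Poincare inequality} applied on $Q_\sigma^\lambda(z_0)$, so that $R=\sigma$ and $\ell=\lambda^{2-p}\sigma^2$, with $m=p$ and the given $\theta$. Since $\theta>\frac{q-1}{q}\geq\frac1p$, this choice is admissible. It gives
$$
\miint{Q_\sigma^\lambda(z_0)}\frac{|u-u_{Q_\sigma^\lambda(z_0)}|^{\theta p}}{\sigma^{\theta p}}\,dz\leq c\miint{Q_\sigma^\lambda(z_0)}|Du|^{\theta p}\,dz+c\left(\lambda^{2-p}\miint{Q_\sigma^\lambda(z_0)}\left[|Du|^{p-1}+a(z)|Du|^{q-1}\right]dz\right)^{\theta p}.
$$
The first term is bounded by $c\miint{Q_\sigma^\lambda(z_0)} H(z,|Du|)^\theta\,dz$ trivially. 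All the work is in the drift term. We split its integrand using Lemma \ref{lem : last term estimate in p-intrinsic cylinder}, which produces a $|Du|^{p-1}$ part and a part $c\lambda^{-1+\frac pq}\miint{} a^{\frac{q-1}{q}}|Du|^{q-1}$.

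For the $p$-part we use Hölder's inequality with exponent $\frac{\theta p}{p-1}\geq 1$. This is valid because $\theta>\frac{q-1}{q}\geq\frac{p-1}{p}$. It gives $\miint{}|Du|^{p-1}\leq (\miint{}|Du|^{\theta p})^{\frac{p-1}{\theta p}}$. For the $a$-part we use Hölder with exponent $\frac{\theta q}{q-1}\geq1$, which is exactly where $\theta>\frac{q-1}{q}$ enters. It gives $\miint{} a^{\frac{q-1}{q}}|Du|^{q-1}\leq(\miint{}(a|Du|^q)^\theta)^{\frac{q-1}{\theta q}}$. Both quantities are bounded by $\mathcal{H}:=\miint{Q_\sigma^\lambda(z_0)} H(z,|Du|)^\theta\,dz$.

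Next we extract the right power of $\lambda$. Since $\sigma\in[2\rho,4\rho]\subset(\rho,2\kappa\rho]$, \eqref{cond : p-phase condition}$_2$ and Jensen give $\mathcal H\leq \lambda^{\theta p}$. We write the drift bracket to the power $\theta p$ as
$$
\lambda^{(2-p)\theta p}\Big(\mathcal H^{\frac{p-1}{\theta p}}+\lambda^{-1+\frac pq}\mathcal H^{\frac{q-1}{\theta q}}\Big)^{\theta p}.
$$
For the first piece, $\lambda^{(2-p)\theta p}\mathcal H^{p-1}=\lambda^{(2-p)\theta p}\mathcal H^{p-2}\mathcal H\leq \lambda^{(2-p)\theta p}\lambda^{\theta p(p-2)}\mathcal H=\mathcal H$. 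For the second piece we use $\mathcal H^{\frac{q-1}{\theta q}}=\mathcal H^{\frac{1}{\theta p}}\mathcal H^{\frac{q-1}{\theta q}-\frac1{\theta p}}$. The exponent $\frac{q-1}{\theta q}-\frac1{\theta p}$ is nonnegative since $p\ge2$, $q>p$, so this is at most $\mathcal H^{\frac1{\theta p}}\lambda^{\frac{p(q-1)}{q}-1}$. Raising to the power $\theta p$ and collecting the powers of $\lambda$, namely $(2-p)\theta p+\theta p(-1+\frac pq)+\theta p(\frac{p(q-1)}{q}-1)=0$, the result is $c\,\mathcal H$. Combining the three bounds proves the lemma.

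The main obstacle is the $\lambda$-bookkeeping in the drift term. Each piece must use exactly the number of factors of $\mathcal H$ that can be traded for $\lambda$ via \eqref{cond : p-phase condition}$_2$, while one full factor of $\mathcal H$ is kept. This needs the Hölder exponents to be $\geq1$, which is the role of $\theta>\frac{q-1}{q}$. The estimate $\sup a\lesssim K\lambda^{p-q}$ from \eqref{cond : p-phase condition}$_1$ is already packaged in Lemma \ref{lem : last term estimate in p-intrinsic cylinder}, and the Hölder assumption on $u$ is never used.
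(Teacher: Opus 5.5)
Your proof is correct and follows the paper's argument. You use Lemma \ref{lem : semi-Parabolic Poincare inequality} with $m=p$ together with Lemma \ref{lem : last term estimate in p-intrinsic cylinder}, H\"older's inequality with exponents $\frac{\theta p}{p-1}$ and $\frac{\theta q}{q-1}$, and then trade all but one factor of the energy for powers of $\lambda$ via \eqref{cond : p-phase condition}$_2$ so that the $\lambda$-exponents cancel. The only difference is cosmetic: you convert the surplus factors through the single Jensen bound $\mathcal H\le\lambda^{\theta p}$, whereas the paper bounds the averages of $|Du|^p$ and $a(z)|Du|^q$ by $\lambda^p$ separately.
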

\begin{proof}
    By Lemmas \ref{lem : semi-Parabolic Poincare inequality} and \ref{lem : last term estimate in p-intrinsic cylinder}, there exists a positive constant $c=c(\operatorname{data}_\gamma)$ such that
    \begin{align*}
        \miint{Q_\sigma^\lambda(z_0)}\frac{\Big|u-u_{Q_\sigma^\lambda(z_0)}\Big|^{\theta p}}{\sigma^{\theta p}}\, dz &\leq  c \miint{Q_\sigma^\lambda(z_0)}|D u|^{\theta p} \,dz\\
        &\quad +c\left(\lambda^{2-p}\miint{Q_\sigma^\lambda(z_0)} |Du|^{p-1}\, dz\right)^{\theta p}\\
        &\quad +c\left(\lambda^{1-p+\frac{p}{q}}\miint{Q_\sigma^\lambda(z_0)} a(z)^{\frac{q-1}{q}}|Du|^{q-1}\, dz\right)^{\theta p}.
    \end{align*}
    We conclude from H\"{o}lder's inequality and \eqref{cond : p-phase condition} that
    \begin{align*}
        &\miint{Q_\sigma^\lambda(z_0)}\frac{\Big|u-u_{Q_\sigma^\lambda(z_0)}\Big|^{\theta p}}{\sigma^{\theta p}}\, dz\\
        &\quad\leq  c \miint{Q_\sigma^\lambda(z_0)}|D u|^{\theta p} \,dz\\
        &\qquad +c\lambda^{(2-p)\theta p}\left(\miint{Q_\sigma^\lambda(z_0)} |Du|^{\theta p}\, dz\right)^{p-1}\\
        &\qquad +c\lambda^{\left(1-p+\frac{p}{q}\right)\theta p}\left(\miint{Q_\sigma^\lambda(z_0)} a(z)^\theta|Du|^{\theta q}\, dz\right)^{p-\frac{p}{q}}\\
        &\quad\leq  c \miint{Q_\sigma^\lambda(z_0)}|D u|^{\theta p} \,dz\\
        &\qquad +c\lambda^{(2-p)\theta p}\left(\miint{Q_\sigma^\lambda(z_0)} |Du|^{\theta p}\, dz\right)\left(\miint{Q_\sigma^\lambda(z_0)} |Du|^{p}\, dz\right)^{(p-2)\theta}\\
        &\qquad +c\lambda^{\left(1-p+\frac{p}{q}\right)\theta p}\left(\miint{Q_\sigma^\lambda(z_0)} a(z)^\theta|Du|^{\theta q}\, dz\right)\left(\miint{Q_\sigma^\lambda(z_0)} a(z)|Du|^{q}\, dz\right)^{\left(-1+p-\frac{p}{q}\right)\theta}\\
        &\quad\leq  c \miint{Q_\sigma^\lambda(z_0)}H(z,|Du|)^{\theta} \,dz.
    \end{align*}
\end{proof}
\begin{lemma}\label{lem : p-intrinsic parabolic Poincare inequality of q-term in p-intrinsic cylinder}
    Let $u$ be a weak solution to \eqref{eq : main equation} and assume that $Q_{4\rho}^\lambda(z_0)\subset \Omega_T$ satisfies \eqref{cond : p-phase condition}. Then, for $\sigma\in[2\rho,4\rho]$ and $\theta \in (\frac{q-1}{q},1]$, there exists a positive constant $c=c(\operatorname{data}_\gamma)$ such that
    \begin{align*}
        \miint{Q_\sigma^\lambda(z_0)}\inf_{w\in Q_\sigma^\lambda(z_0)}a(w)^\theta\frac{\Big|u-u_{Q_\sigma^\lambda(z_0)}\Big|^{\theta q}}{\sigma^{\theta q}}\, dz\leq c\miint{Q_\sigma^\lambda(z_0)} H(z,|Du|)^\theta\, dz.
    \end{align*}
\end{lemma}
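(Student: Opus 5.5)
We apply Lemma \ref{lem : semi-Parabolic Poincare inequality} with $m=q$ to $Q_\sigma^\lambda(z_0)=Q_{\sigma,\lambda^{2-p}\sigma^2}(z_0)$. For the admissible range we only need $\theta>1/q$, which follows from $\theta>\frac{q-1}{q}\geq\frac1q$. Since $a_0\coloneq\inf_{Q_\sigma^\lambda(z_0)}a(\cdot)$ is a constant, we multiply the resulting inequality by $a_0^\theta$ and obtain
\begin{align*}
\miint{Q_\sigma^\lambda(z_0)} a_0^\theta\frac{|u-u_{Q_\sigma^\lambda(z_0)}|^{\theta q}}{\sigma^{\theta q}}\,dz
&\leq c\miint{Q_\sigma^\lambda(z_0)} a_0^\theta|Du|^{\theta q}\,dz\\
&\quad+c\,a_0^\theta\left(\lambda^{2-p}\miint{Q_\sigma^\lambda(z_0)}\left[|Du|^{p-1}+a(z)|Du|^{q-1}\right]dz\right)^{\theta q}.
\end{align*}
The first term is bounded by $c\miint{} a(z)^\theta|Du|^{\theta q}\,dz\leq c\miint{} H(z,|Du|)^\theta\,dz$, because $a_0\leq a(z)$.

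The second term is split using Lemma \ref{lem : last term estimate in p-intrinsic cylinder}. Since $Q_\sigma^\lambda(z_0)\subset Q_{10\rho}(z_0)$, we have $a_0\leq\sup_{Q_{10\rho}(z_0)}a(\cdot)\leq K\lambda^{p-q}$, and therefore $a_0^\theta\leq c\lambda^{(p-q)\theta}$.

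For the $|Du|^{p-1}$ piece we get
\[
c\lambda^{(p-q)\theta}\lambda^{(2-p)\theta q}\Big(\miint{}|Du|^{p-1}\Big)^{\theta q}.
\]
We apply H\"older's inequality with exponent $\frac{\theta p}{p-1}\geq1$, which is valid since $\theta>\frac{q-1}{q}\ge\frac{p-1}{p}$. This bounds the piece by $(\miint{}|Du|^{\theta p})^{\frac{(p-1)q}{p}}$. We then split off one factor $\miint{}|Du|^{\theta p}$ and bound the remaining power by \eqref{cond : p-phase condition}. That gives $\miint{}|Du|^{\theta p}\leq(\miint{}|Du|^p)^\theta\leq\lambda^{\theta p}$, raised to the power $\frac{(p-1)q}{p}-1$.

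Next we check the $\lambda$-exponents:
\[
(p-q)\theta+(2-p)\theta q+\theta\big((p-1)q-p\big)=\theta\big(p-q+2q-pq+pq-q-p\big)=0.
\]
So this piece is $\leq c\miint{}|Du|^{\theta p}\,dz$.

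For the $a$-piece we have
\[
c\lambda^{(p-q)\theta}\Big(\lambda^{1-p+\frac pq}\miint{}a^{\frac{q-1}{q}}|Du|^{q-1}\Big)^{\theta q}.
\]
Here we use H\"older's inequality with exponent $\frac{\theta q}{q-1}\geq1$, which is exactly where $\theta>\frac{q-1}{q}$ enters. This bounds the piece by $(\miint{}a^\theta|Du|^{\theta q})^{q-1}$. We keep one factor $\miint{}a^\theta|Du|^{\theta q}$ and bound the remaining factors by $(\miint{}a|Du|^q)^{\theta(q-2)}\leq\lambda^{p\theta(q-2)}$.

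The $\lambda$-exponent is then
\[
\theta\big(p-q+q-pq+p+p(q-2)\big)=0.
\]
So this piece is $\leq c\miint{}a^\theta|Du|^{\theta q}$.

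Summing all the pieces yields the claim. The main point to verify is this exponent bookkeeping, which balances because of \eqref{cond : p-phase condition}$_1$. Note that the case $q-2<0$ cannot occur, since $q>p\geq2$. The constants depend only on $n,p,q,L,K$, hence on $\operatorname{data}_\gamma$.
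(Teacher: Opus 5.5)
Your proposal is correct and follows the paper's proof: you apply Lemma \ref{lem : semi-Parabolic Poincare inequality} with $m=q$, split the drift term by Lemma \ref{lem : last term estimate in p-intrinsic cylinder}, use $\inf a\leq K\lambda^{p-q}$, and apply H\"older's inequality with exponents $\frac{\theta p}{p-1}$ and $\frac{\theta q}{q-1}$ together with the energy bound in \eqref{cond : p-phase condition}. You also write out the $\lambda$-exponent cancellations that the paper leaves implicit in its last inequality, and both cancellations check out.
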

\begin{proof}
    By Lemmas \ref{lem : semi-Parabolic Poincare inequality} and \ref{lem : last term estimate in p-intrinsic cylinder}, there exists a positive constant $c=c(\operatorname{data}_\gamma)$ such that
    \begin{align*}
        &\miint{Q_\sigma^\lambda(z_0)}\inf_{w\in Q_\sigma^\lambda(z_0)}a(w)^\theta\frac{\Big|u-u_{Q_\sigma^\lambda(z_0)}\Big|^{\theta q}}{\sigma^{\theta q}}\, dz \\
        &\quad\leq  c \miint{Q_\sigma^\lambda(z_0)} \inf_{w\in Q_\sigma^\lambda(z_0)}a(w)^\theta|D u|^{\theta q} \,dz\\
        &\qquad +c\inf_{w\in Q_\sigma^\lambda(z_0)}a(w)^\theta\left(\lambda^{2-p}\miint{Q_\sigma^\lambda(z_0)} |Du|^{p-1}\, dz\right)^{\theta q}\\
        &\qquad +c\inf_{w\in Q_\sigma^\lambda(z_0)}a(w)^\theta\left(\lambda^{1-p+\frac{p}{q}}\miint{Q_\sigma^\lambda(z_0)} a(z)^{\frac{q-1}{q}}|Du|^{q-1}\, dz\right)^{\theta q}.
    \end{align*}
    By H\"{o}lder's inequality and \eqref{cond : p-phase condition}, we obtain
    \begin{align*}
        &\miint{Q_\sigma^\lambda(z_0)}\inf_{w\in Q_\sigma^\lambda(z_0)}a(w)^\theta\frac{\Big|u-u_{Q_\sigma^\lambda(z_0)}\Big|^{\theta q}}{\sigma^{\theta q}}\, dz\\
        &\quad\leq  c \miint{Q_\sigma^\lambda(z_0)} a(z)^\theta|D u|^{\theta q} \,dz\\
        &\qquad +c\lambda^{(p-q)\theta}\lambda^{(2-p)\theta q}\left(\miint{Q_\sigma^\lambda(z_0)} |Du|^{\theta p}\, dz\right)^{q-\frac{q}{p}}\\
        &\qquad +c\lambda^{(p-q)\theta}\lambda^{\left(1-p+\frac{p}{q}\right)\theta q}\left(\miint{Q_\sigma^\lambda(z_0)} a(z)^\theta|Du|^{\theta q}\, dz\right)^{q-1}\\
        &\quad \leq c \miint{Q_\sigma^\lambda(z_0)} H(z,|Du|)^\theta \,dz.
    \end{align*}
\end{proof}

\subsection{\bf The $(p,q)$-phase case}
The coefficient comparison and the energy bound in \eqref{cond : p,q-phase condition}, together with $K\geq1$, imply for every $\sigma\in(\rho,4\rho]$ that
$$
\miint{G_\sigma^\lambda(z_0)}H_{z_0}(|Du|)\,dz
<2H_{z_0}(\lambda)
\leq2\left(1+\frac2K\right)a(z_0)\lambda^q
\leq6a(z_0)\lambda^q.
$$
Consequently,
\begin{equation}\label{cond : simple p,q-phase condition}
    \miint{G_\sigma^\lambda(z_0)}|Du|^q\,dz<6\lambda^q
    \qquad\text{for every }\sigma\in(\rho,4\rho].
\end{equation}

We now establish a $(p,q)$-intrinsic parabolic Poincar\'{e} inequality.
\begin{lemma}\label{lem : p,q-intrinsic parabolic Poincare inequality}
    Let $u$ be a weak solution to \eqref{eq : main equation} and assume that $G_{4\rho}^\lambda(z_0)\subset \Omega_T$ satisfies \eqref{cond : p,q-phase condition}. Then, for $\sigma\in[2\rho,4\rho]$ and $\theta \in (\frac{q-1}{p},1]$, there exists a positive constant $c=c(n,p,q,L)$ such that
    $$
    \miint{G_\sigma^\lambda (z_0)} H^\theta_{z_0}\left(\frac{\Big|u-u_{G_\sigma^\lambda(z_0)}\Big|}{\sigma}\right)\, dz\leq c \miint{G_\sigma^\lambda(z_0)} H_{z_0}^\theta(|Du|)\, dz.
    $$
\end{lemma}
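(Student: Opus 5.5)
We will split $H_{z_0}$ into its two pieces and treat
$$
\miint{G_\sigma^\lambda(z_0)} \frac{\Big|u-u_{G_\sigma^\lambda(z_0)}\Big|^{\theta p}}{\sigma^{\theta p}}\,dz
\quad\text{and}\quad
a(z_0)^\theta\miint{G_\sigma^\lambda(z_0)} \frac{\Big|u-u_{G_\sigma^\lambda(z_0)}\Big|^{\theta q}}{\sigma^{\theta q}}\,dz
$$
separately, using $H_{z_0}^\theta(\varkappa)\le \varkappa^{\theta p}+a(z_0)^\theta\varkappa^{\theta q}$ and the reverse inequality up to a factor $2$. For each piece we apply Lemma \ref{lem : semi-Parabolic Poincare inequality} on $G_\sigma^\lambda(z_0)=Q_{\sigma,\ell}(z_0)$ with $\ell=\frac{\lambda^2}{H_{z_0}(\lambda)}\sigma^2$. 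We take $m=p$ for the first piece and $m=q$ for the second; the admissibility $\theta>1/m$ follows from $\theta>\frac{q-1}{p}\ge \frac1p$. Hence $\ell/\sigma^2=\lambda^2/H_{z_0}(\lambda)$, and the gradient terms $|Du|^{\theta p}$ and $a(z_0)^\theta|Du|^{\theta q}$ are already bounded by $cH^\theta_{z_0}(|Du|)$. What remains is the drift term. In it we replace $a(z)\le 2a(z_0)$ by \eqref{cond : p,q-phase condition}, so it is controlled by
$$
\frac{\lambda^2}{H_{z_0}(\lambda)}\miint{G_\sigma^\lambda(z_0)}\left[|Du|^{p-1}+a(z_0)|Du|^{q-1}\right]dz .
$$

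To estimate the drift we use two facts. First, $H_{z_0}(\lambda)\ge \lambda^p$ and $H_{z_0}(\lambda)\ge a(z_0)\lambda^q$, so that $\frac{\lambda^2}{H_{z_0}(\lambda)}\le\lambda^{2-p}$ and $\frac{\lambda^2a(z_0)}{H_{z_0}(\lambda)}\le\lambda^{2-q}$. Second, we have the bound \eqref{cond : simple p,q-phase condition}, namely $\miint{G_\sigma^\lambda(z_0)}|Du|^q\,dz<6\lambda^q$ (together with the $p$-energy bound $\miint{}|Du|^p\le c\lambda^p$ from \eqref{cond : p,q-phase condition}). The scheme is the same as in Lemma \ref{lem : p-intrinsic parabolic Poincare inequality of p-term in p-intrinsic cylinder}. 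We raise the drift to the power $\theta m$ and apply H\"older to pass from $|Du|^{p-1}$ to $(\miint{}|Du|^{\theta p})^{(p-1)/(\theta p)}$, and from $|Du|^{q-1}$ to a $\theta p$- or $\theta q$-average. We then split off one full copy of the $\theta$-average, and bound the excess power through the energy bounds by the matching power of $\lambda$. The $\lambda$ powers cancel exactly. For the $q$-piece, for instance, the $a(z_0)|Du|^{q-1}$ contribution becomes
$$
a(z_0)^\theta\lambda^{(2-q)\theta q}\Big(\miint{}|Du|^{\theta q}\Big)^{q-1}\le c\,a(z_0)^\theta\miint{}|Du|^{\theta q}\,dz .
$$
The mixed contribution $a(z_0)^\theta(\lambda^{2}H_{z_0}(\lambda)^{-1}\miint{}|Du|^{p-1})^{\theta q}$ is handled with $\frac{\lambda^2}{H_{z_0}(\lambda)}\le \lambda^{2-q}a(z_0)^{-1}$ for part of the power and $\le\lambda^{2-p}$ for the rest, balanced so that it is bounded by $c\miint{}|Du|^{\theta p}$.

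The main obstacle is the $p$-piece with the $q$-drift, i.e.
$$
\Big(\lambda^{2-q}\miint{G_\sigma^\lambda(z_0)}|Du|^{q-1}\,dz\Big)^{\theta p}.
$$
No factor $a(z_0)$ is available outside to absorb into an $a$-weighted energy, so this must be controlled by the unweighted $\miint{}|Du|^{\theta p}$. We will apply H\"older with exponent $\frac{\theta p}{q-1}\ge1$; this is precisely where $\theta\ge\frac{q-1}{p}$, and hence $q<p+1$, enters. This gives $\lambda^{(2-q)\theta p}(\miint{}|Du|^{\theta p})^{q-1}$. We then write $(\miint{}|Du|^{\theta p})^{q-2}\le(\miint{}|Du|^{q})^{\theta p(q-2)/q}\le c\lambda^{\theta p(q-2)}$ by \eqref{cond : simple p,q-phase condition}, which leaves $c\miint{}|Du|^{\theta p}$. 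Summing all pieces yields the claim with $c=c(n,p,q,L)$.
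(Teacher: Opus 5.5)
Your proposal is correct and follows essentially the paper's argument. You apply Lemma \ref{lem : semi-Parabolic Poincare inequality} on $G_\sigma^\lambda(z_0)$ with $m=p$ and $m=q$, then use H\"older's inequality with exponent $\theta m/(q-1)\geq 1$ (exactly where $\theta\geq\frac{q-1}{p}$ enters) and the bound \eqref{cond : simple p,q-phase condition} to cancel the powers of $\lambda$. The only difference is bookkeeping: the paper first compresses the whole drift into $c\lambda^{2-p}M^{p-1}$ with $M=\big(\miint{G_\sigma^\lambda(z_0)}|Du|^{q-1}\,dz\big)^{1/(q-1)}\leq c\lambda$ and then evaluates $H_{z_0}^\theta$ at that value, whereas you estimate the four cross terms separately. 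One small correction: your bound $\miint{G_\sigma^\lambda(z_0)}|Du|^p\,dz\leq c\lambda^p$ comes from \eqref{cond : simple p,q-phase condition} via H\"older, not from the energy bound in \eqref{cond : p,q-phase condition}, which only gives $H_{z_0}(\lambda)$.
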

\begin{proof}
    We only sketch the proof. For a detailed proof, see \cite[Lemma 3.4]{2023_Gradient_Higher_Integrability_for_Degenerate_Parabolic_Double-Phase_Systems}.

    By Lemma \ref{lem : semi-Parabolic Poincare inequality}, \eqref{cond : p,q-phase condition} and \eqref{cond : simple p,q-phase condition}, there exists a positive constant $c=c(n,p,q,L)$ such that
    $$
    \begin{aligned}
        &\miint{G_\sigma^\lambda(z_0)} H_{z_0}^\theta\left(\frac{\Big|u-u_{G_\sigma^\lambda(z_0)}\Big|}{\sigma}\right)dz\\
        &\quad \leq c\miint{G_\sigma^\lambda(z_0)} H_{z_0}^\theta(|Du|)\, dz+cH^\theta_{z_0}\left(\frac{\lambda}{H'_{z_0}(\lambda)}\miint{G_\sigma^\lambda(z_0)} H'_{z_0}(|Du|)\,dz\right)\\
        &\quad \leq c\miint{G_\sigma^\lambda(z_0)} H_{z_0}^\theta(|Du|)\, dz+cH^\theta_{z_0}\left(\lambda^{2-p}\left(\miint{G_\sigma^\lambda(z_0)}|Du|^{q-1}\, dz\right)^{\frac{p-1}{q-1}}\right).
    \end{aligned}
    $$
    To justify the second inequality, put
    $$
    M=\left(\miint{G_\sigma^\lambda(z_0)}|Du|^{q-1}\,dz\right)^{1/(q-1)}.
    $$
    By \eqref{cond : simple p,q-phase condition}, $M\leq c\lambda$. Thus
    $$
    \frac{\lambda}{H'_{z_0}(\lambda)}\miint{G_\sigma^\lambda(z_0)}H'_{z_0}(|Du|)\,dz
    \leq c\lambda^{2-p}M^{p-1}.
    $$
    For $m=p,q$, write $A_m=\miint{G_\sigma^\lambda(z_0)}|Du|^{\theta m}\,dz$. Since $\theta m>q-1$, H\"older's inequality gives
    $$
    \left(\lambda^{2-p}M^{p-1}\right)^{\theta m}
    \leq \lambda^{(2-p)\theta m}A_m^{p-1}
    \leq c A_m,
    $$
    where $A_m\leq c\lambda^{\theta m}$ and $p\geq2$ were used in the last step. This also covers $p=2$, when the factor $A_m^{p-2}$ is absent. In particular, we obtain
    $$
    \begin{aligned}
        &H^\theta_{z_0}\left(\lambda^{2-p}\left(\miint{G_\sigma^\lambda(z_0)}|Du|^{q-1}\, dz\right)^{\frac{p-1}{q-1}}\right)\\
        &\quad\leq c\left(\lambda^{(2-p)p}\left(\miint{G_\sigma^\lambda(z_0)}|Du|^{q-1}\, dz\right)^{\frac{p(p-1)}{q-1}}\right)^\theta\\
        &\qquad + c\left(a(z_0)\lambda^{(2-p)q}\left(\miint{G_\sigma^\lambda(z_0)}|Du|^{q-1}\, dz\right)^{\frac{q(p-1)}{q-1}}\right)^\theta\\
        &\quad \leq c\miint{G_\sigma^\lambda(z_0)}H^\theta_{z_0}(|Du|)\, dz,
    \end{aligned}
    $$
    and the lemma follows.
\end{proof}
According to \cite[Lemma 3.5]{2023_Gradient_Higher_Integrability_for_Degenerate_Parabolic_Double-Phase_Systems}, if we replace $H_{z_0}^\theta(\kappa)$ with $\kappa^{\theta p}$, we get the following result.
\begin{lemma}\label{lem : p,q-intrinsic parabolic Poincare inequality of p-term}
    Let $u$ be a weak solution to \eqref{eq : main equation} and assume that $G_{4\rho}^\lambda(z_0)\subset \Omega_T$ satisfies \eqref{cond : p,q-phase condition}. Then, for $\sigma\in[2\rho,4\rho]$ and $\theta \in (\frac{q-1}{p},1]$, there exists a positive constant $c=c(n,p,q,L)$ such that
    $$
    \miint{G_\sigma^\lambda(z_0)}\left(\frac{\Big|u-u_{G_\sigma^\lambda(z_0)}\Big|}{\sigma}\right)^{\theta p}\, dz\leq c\miint{G_\sigma^\lambda(z_0)} |Du|^{\theta p}\, dz.
    $$
\end{lemma}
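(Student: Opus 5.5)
We apply Lemma \ref{lem : semi-Parabolic Poincare inequality} on $G_\sigma^\lambda(z_0)=Q_{\sigma,\ell}(z_0)$ with $\ell=\frac{\lambda^2}{H_{z_0}(\lambda)}\sigma^2$ and $m=p$. This is admissible because $\theta p>q-1\geq p-1$ gives $\theta>1/p$, and $\theta\leq 1$. The lemma bounds the left-hand side by
$$
c\miint{G_\sigma^\lambda(z_0)}|Du|^{\theta p}\,dz+c\left(\frac{\lambda^2}{H_{z_0}(\lambda)}\miint{G_\sigma^\lambda(z_0)}\left[|Du|^{p-1}+a(z)|Du|^{q-1}\right]dz\right)^{\theta p}.
$$
The whole task is to absorb the second term into $\miint{G_\sigma^\lambda(z_0)}|Du|^{\theta p}\,dz$.

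First we treat the drift. By the coefficient comparison in \eqref{cond : p,q-phase condition}$_1$ we have $a(z)\leq 2a(z_0)$. Together with $H_{z_0}(\lambda)\geq a(z_0)\lambda^q$ and $H_{z_0}(\lambda)\geq\lambda^p$, this gives
$$
\frac{\lambda^2}{H_{z_0}(\lambda)}\miint{G_\sigma^\lambda(z_0)}\left[|Du|^{p-1}+a(z)|Du|^{q-1}\right]dz\leq \lambda^{2-p}\miint{G_\sigma^\lambda(z_0)}|Du|^{p-1}\,dz+2\lambda^{2-q}\miint{G_\sigma^\lambda(z_0)}|Du|^{q-1}\,dz.
$$

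Next we absorb each piece by H\"older's inequality with exponent $\theta p$, which is legitimate since $\theta p>q-1\geq p-1$. For the $p$-piece,
$$
\left(\lambda^{2-p}\miint{}|Du|^{p-1}\right)^{\theta p}\leq \lambda^{(2-p)\theta p}A^{p-1},\qquad A=\miint{G_\sigma^\lambda(z_0)}|Du|^{\theta p}\,dz.
$$
For the $q$-piece,
$$
\left(\lambda^{2-q}\miint{}|Du|^{q-1}\right)^{\theta p}\leq \lambda^{(2-q)\theta p}A^{q-1}.
$$
To absorb the surplus powers of $A$ we need the bound $A\leq c\lambda^{\theta p}$. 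By H\"older's inequality $A\leq\big(\miint{}|Du|^{q}\big)^{\theta p/q}$, and \eqref{cond : simple p,q-phase condition} bounds this by $c\lambda^{\theta p}$ for $\sigma\in(\rho,4\rho]$. This covers $\sigma\in[2\rho,4\rho]$.

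With this bound, $\lambda^{(2-p)\theta p}A^{p-1}\leq c\lambda^{(2-p)\theta p}\lambda^{\theta p(p-2)}A=cA$. Likewise $\lambda^{(2-q)\theta p}A^{q-1}\leq c\lambda^{(2-q)\theta p+\theta p(q-2)}A=cA$, using $q-2\geq0$. If $p=2$ the factor $A^{p-2}$ is simply absent. The constants depend only on $n,p,q,L$, since \eqref{cond : simple p,q-phase condition} has the absolute constant $6$.

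The main point needing care is the $q$-drift term. Its power $A^{q-1}$ requires H\"older with exponent $\frac{\theta p}{q-1}\geq1$, which is exactly where $\theta>\frac{q-1}{p}$ is used. It also requires the reduction to the single bound $A\lesssim\lambda^{\theta p}$, which comes from the $(p,q)$-phase energy bound \eqref{cond : simple p,q-phase condition} and not from the $p$-energy alone.
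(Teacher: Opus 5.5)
Your proof is correct and follows essentially the paper's route, namely the cited Kim--Kinnunen--Moring argument that the paper's sketch of Lemma \ref{lem : p,q-intrinsic parabolic Poincare inequality} also follows (take $m=p$ there). Both apply Lemma \ref{lem : semi-Parabolic Poincare inequality} with $m=p$, use H\"older with exponent $\frac{\theta p}{q-1}\geq 1$ on the drift, and absorb the surplus powers through \eqref{cond : simple p,q-phase condition}. The only difference is cosmetic: the paper first collapses the drift to $c\lambda^{2-p}M^{p-1}$ using $M\leq c\lambda$ and then applies H\"older once, whereas you apply H\"older to the $p$- and $q$-pieces separately and absorb afterwards via $A\leq c\lambda^{\theta p}$.
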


\section{\bf Reverse H\"{o}lder inequalities}\label{section 5}
In this section, we establish reverse H\"{o}lder inequalities separately in each intrinsic cylinder. For this, we need the following auxiliary lemmas, called the Gagliardo-Nirenberg inequality and a standard iteration lemma, see \cite[Lemma 2.12]{Hasto_2021} and \cite[Lemma 6.1]{2003_Giusti_Direct_methods_in_the_calculus_of_variations}, respectively.
\begin{lemma}   \label{lem : Gagliardo-Nirenberg inequality}
    For an open ball $B=B_{\rho}(x_0)\subset \mr^n$, take $p_1,\,p_2,\,p_3\in[1,\infty)$, $\vartheta\in(0,1)$ and let $\psi\in W^{1,p_2}(B)$. Suppose that
    $$
    -\frac{n}{p_1}\leq \vartheta\left(1-\frac{n}{p_2}\right)-(1-\vartheta)\frac{n}{p_3}.
    $$
    Then there exists a positive constant $c=c(n,p_1)$ such that
    $$
    \dashint_B \frac{|\psi|^{p_1}}{\rho^{p_1}}\, dx\leq c \left(\dashint_{B_\rho(x_0)}\left[\frac{|\psi|^{p_2}}{\rho^{p_2}}+|D\psi|^{p_2}\right]dx\right)^{\frac{\vartheta p_1}{p_2}}\left(\dashint_{B_{\rho}(x_0)}\frac{|\psi|^{p_3}}{\rho^{p_3}}\,dx\right)^{\frac{(1-\vartheta)p_1}{p_3}}.
    $$
\end{lemma}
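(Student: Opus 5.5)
The plan is to reduce to the unit ball by scaling and then combine a Sobolev embedding with H\"older interpolation between Lebesgue norms. First set $\tilde\psi(y)\coloneq\psi(x_0+\rho y)/\rho$ on $B_1=B_1(0)$. Then $|D\tilde\psi(y)|=|D\psi(x_0+\rho y)|$, and each normalized average in the statement, such as $\dashint_B|\psi|^{p_1}\rho^{-p_1}\,dx$, equals the corresponding average of $\tilde\psi$ over $B_1$. So the inequality is scale invariant, and it suffices to prove it for $\rho=1$, $x_0=0$, in the form
$$
\|\psi\|_{L^{p_1}(B_1)}\leq c\,\|\psi\|_{W^{1,p_2}(B_1)}^{\vartheta}\,\|\psi\|_{L^{p_3}(B_1)}^{1-\vartheta},
$$
with norms taken with respect to normalized measure. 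Raising this to the power $p_1$ gives the stated form.

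Next I would rewrite the hypothesis. Dividing by $-n$, it reads
$$
\frac{1}{p_1}\geq \vartheta\Big(\frac1{p_2}-\frac1n\Big)+\frac{1-\vartheta}{p_3}.
$$
Choose a Sobolev target exponent $s$ and define $r$ by $\frac1r=\frac{\vartheta}{s}+\frac{1-\vartheta}{p_3}$. I would take $s$ as follows:
\begin{itemize}
\item if $p_2<n$, let $s=p_2^*=\frac{np_2}{n-p_2}$;
\item if $p_2>n$, let $s=\infty$, using Morrey's embedding on $B_1$;
\item if $p_2=n$, let $s<\infty$ be large, so that $\frac{\vartheta}{s}+\frac{1-\vartheta}{p_3}\leq\frac1{p_1}$ still holds, which is possible when the hypothesis is strict.
\end{itemize}
In each case the hypothesis gives $\frac1r\leq\frac1{p_1}$, that is, $p_1\leq r$.

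The argument then proceeds in three steps.
\begin{enumerate}
\item Since the measure is normalized, Jensen's inequality gives $\|\psi\|_{L^{p_1}(B_1)}\leq\|\psi\|_{L^{r}(B_1)}$.
\item H\"older's interpolation inequality gives $\|\psi\|_{L^r}\leq\|\psi\|_{L^s}^{\vartheta}\|\psi\|_{L^{p_3}}^{1-\vartheta}$.
\item The Sobolev (or Morrey) embedding $W^{1,p_2}(B_1)\hookrightarrow L^s(B_1)$ on the fixed Lipschitz domain $B_1$, obtained by extension to $\mathbb{R}^n$, gives $\|\psi\|_{L^s}\leq c\|\psi\|_{W^{1,p_2}(B_1)}$. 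The inhomogeneous $W^{1,p_2}$-norm accounts for the term $|\psi|^{p_2}/\rho^{p_2}$ in the first factor.
\end{enumerate}
Chaining these three bounds proves the claim.

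I expect the main obstacle to be the borderline case $p_2=n$ with equality in the hypothesis. There the admissible $s$ would have to be $\infty$, and $W^{1,n}\not\hookrightarrow L^\infty$, so the argument above does not apply. I would first check whether this configuration is actually excluded in the form of the cited lemma, for instance by strictness or because $p_1$ stays finite. If it is not excluded, I would verify that every later application in the paper uses the lemma with strict inequality. A secondary issue is the constant. Steps 1 and 2 have constant one, but the embedding constant in step 3 depends on $n$, $p_2$ and $s$, not only on $(n,p_1)$. In the applications the exponents are drawn from $\operatorname{data}_\gamma$, so I would track this dependence and absorb it into the constants of the reverse H\"older inequalities.
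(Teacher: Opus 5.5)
\textbf{Gap: the case $p_2>n$.} Your scaling reduction and the chain ``Jensen, then H\"older interpolation, then Sobolev'' are correct when $p_2<n$, since there $\frac1r$ equals the right-hand side of the rewritten hypothesis. They are also correct when $p_2=n$ with strict inequality. The argument fails for $p_2>n$, and not only at the borderline you flagged.

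Your assertion that in each case the hypothesis gives $\frac1r\le\frac1{p_1}$ is false when $p_2>n$. Since $\frac1{p_2}-\frac1n<0$, setting $\frac1s=0$ makes $\frac1r=\frac{1-\vartheta}{p_3}$ strictly \emph{larger} than the lower bound $\vartheta(\frac1{p_2}-\frac1n)+\frac{1-\vartheta}{p_3}$ that the hypothesis gives for $\frac1{p_1}$. So the hypothesis admits $p_1>r=\frac{p_3}{1-\vartheta}$, and even every $p_1$ once $\vartheta(1-\frac n{p_2})\ge(1-\vartheta)\frac n{p_3}$. In that range Step 1 runs the wrong way.

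No other choice of $s$ helps, because $\frac{\vartheta}{s}\ge0$ always. Embedding into $L^s$ and then interpolating with the same weight $\vartheta$ can never use the gain $\vartheta(\frac1n-\frac1{p_2})>0$. Strictness of the hypothesis does not help either.

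This regime also occurs in the paper's use of the lemma. In Lemma~\ref{lem : estimation of difference of u and mean u in p intrinsic cylinder} the parameters are $p_1=p$, $p_2=\theta p$, $p_3=2$, $\vartheta=\theta$; this is consistent with the range $\theta>\frac n{n+2}$ and the factor $\lambda^{(1-\theta)p}$ there. For $n=2$, $p=10$, $\theta=0.6$ the hypothesis holds, yet $p_2=6>n$ and $r=5<p_1=10$.

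\textbf{What is missing: a multiplicative $L^\infty$ bound.} On $B_1$, Morrey's oscillation estimate on $B_t(x)\cap B_1$ gives, for $t\in(0,1]$,
\[
|\psi(x)|\le c\,t^{-n/p_3}\|\psi\|_{L^{p_3}}+c\,t^{1-n/p_2}\|D\psi\|_{L^{p_2}} .
\]
Optimize in $t$; if the optimal $t$ exceeds $1$, use $\|\psi\|_{L^{p_3}}\le c\|\psi\|_{W^{1,p_2}}$ instead. This yields $\|\psi\|_{L^\infty}\le c\|\psi\|_{W^{1,p_2}}^{a}\|\psi\|_{L^{p_3}}^{1-a}$ with $a=\frac{1/p_3}{1/p_3+1/n-1/p_2}<1$.

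For $p_1\ge p_3$, H\"older between $L^{p_3}$ and $L^\infty$ then puts the exponent $\vartheta^*=a(1-\frac{p_3}{p_1})$ on the Sobolev norm. The hypothesis is exactly $\vartheta\ge\vartheta^*$, and the surplus is moved over using $\|\psi\|_{L^{p_3}}\le c\|\psi\|_{W^{1,p_2}}$. The case $p_1<p_3$ is immediate.

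\textbf{The borderline case and the constant.} The case $p_2=n$ with equality is not excluded by the statement. The inequality does hold there, since Nirenberg's exceptional case only forbids $\vartheta=1$. It likewise needs a multiplicative argument, for example applying $W^{1,1}\hookrightarrow L^{n/(n-1)}$ to powers $|\psi|^\gamma$ as in Ladyzhenskaya's inequality.

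The paper does not prove the lemma. It quotes the classical Gagliardo--Nirenberg inequality from \cite[Lemma 2.12]{Hasto_2021}, which covers all these regimes at once. Your remark that the constant will depend on more than $(n,p_1)$ is fair, and it does no harm in the applications.
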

\begin{lemma}   \label{lem : a standard iteration lemma}
    Let $0<\rho<\tau<\infty$, and let $g:[\rho,\tau]\rightarrow [0,\infty)$ be a bounded function. Suppose that
    $$
    g(\rho_1)\leq \vartheta g(\rho_2)+\frac{A}{(\rho_2-\rho_1)^\beta}+B
    $$
    holds for all $0<\rho\leq \rho_1<\rho_2\leq \tau$, where $\vartheta \in (0,1)$, $A,B\geq 0$ and $\beta>0$. Then there exists a positive constant $c$ depending on $\vartheta$ and $\beta$ such that
    $$
    g(\rho)\leq c\left(\frac{A}{(\tau-\rho)^{\beta}}+B\right).
    $$
\end{lemma}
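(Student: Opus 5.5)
The statement immediately above is Lemma \ref{lem : a standard iteration lemma}, the classical absorption lemma. I would prove it by the usual geometric-sequence iteration rather than by any argument specific to the double phase setting.

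First, fix a parameter $\mu\in(0,1)$, to be chosen depending only on $\vartheta$ and $\beta$. Define radii
$$
\rho_0=\rho,\qquad \rho_{i+1}=\rho_i+(1-\mu)\mu^i(\tau-\rho),
$$
so that $\rho_i\nearrow\tau$ and $\rho_{i+1}-\rho_i=(1-\mu)\mu^i(\tau-\rho)$. Applying the hypothesis with $\rho_1=\rho_i$ and $\rho_2=\rho_{i+1}$ gives
$$
g(\rho_i)\leq \vartheta g(\rho_{i+1})+\frac{A}{(1-\mu)^\beta\mu^{i\beta}(\tau-\rho)^\beta}+B.
$$

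Iterating this $k$ times yields
$$
g(\rho)\leq \vartheta^k g(\rho_k)+\sum_{i=0}^{k-1}\vartheta^i\left[\frac{A\,\mu^{-i\beta}}{(1-\mu)^\beta(\tau-\rho)^\beta}+B\right].
$$

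Next, choose $\mu$ so that $\vartheta\mu^{-\beta}<1$, for instance $\mu^\beta=\frac{1+\vartheta}{2}$. Then $\vartheta\mu^{-\beta}=\frac{2\vartheta}{1+\vartheta}<1$, and both geometric series converge. Their sums are bounded by constants depending only on $\vartheta$ and $\beta$.

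Finally, since $g$ is bounded on $[\rho,\tau]$, we have $\vartheta^k g(\rho_k)\leq \vartheta^k\sup g\to0$ as $k\to\infty$. Letting $k\to\infty$ gives
$$
g(\rho)\leq c\left(\frac{A}{(\tau-\rho)^\beta}+B\right)
$$
with
$$
c=\frac{(1-\mu)^{-\beta}}{1-\vartheta\mu^{-\beta}}+\frac{1}{1-\vartheta}.
$$

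The only delicate point is this choice of $\mu$: it has to be fixed before summing the series, so that $\vartheta\mu^{-\beta}<1$ balances the growth of $\mu^{-i\beta}$ against $\vartheta^i$. The boundedness of $g$ is used exactly once, to discard the remainder term $\vartheta^k g(\rho_k)$. Everything else is routine.
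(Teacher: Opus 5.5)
Your proof is correct. It is the standard argument behind this lemma: the paper does not prove the lemma but cites Giusti's Lemma 6.1, whose proof uses the same geometric radii $\rho_{i+1}=\rho_i+(1-\mu)\mu^i(\tau-\rho)$, the same choice of $\mu$ with $\vartheta\mu^{-\beta}<1$, and boundedness of $g$ to discard the remainder $\vartheta^k g(\rho_k)$.
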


\subsection{The $p$-phase case.}
In this case, we consider the $p$-intrinsic cylinder denoted in \eqref{eq : definition of p-intrinsic cylinder}. We write
$$
S(u,Q_{\rho}^\lambda(z_0))=\sup_{I_\rho^\lambda(t_0)}\dashint_{B_\rho(x_0)}\frac{\Big|u-u_{Q_\rho^\lambda(z_0)}\Big|^2}{\rho^2}\, dx.
$$
Note that, by \eqref{cond : lambda_w geq 1}, every intrinsic cylinder arising from the stopping time argument satisfies $\lambda\geq 1$. We henceforth include this in the standing hypotheses of this section, so that Lemma \ref{lem : oscillation on intrinsic cylinders} is at our disposal.
\begin{lemma}   \label{lem : estimation of S in p-intrinsic cylinder}
    Let $u$ be a weak solution to \eqref{eq : main equation} with \eqref{cond : main assumption with gamma} and let $\lambda\geq 1$. Then there exists a constant $c=c(\operatorname{data}_\gamma)>1$ such that
    $$
    S(u,Q_{2\rho}^\lambda(z_0))\leq c \lambda^2,
    $$
    whenever $Q_{2\kappa\rho}^\lambda(z_0)\subset \Omega_T$ satisfies \eqref{cond : p-phase condition}.
\end{lemma}
\begin{proof}
    Let $2\rho\leq\rho_1<\rho_2\leq 4\rho$. By Lemma \ref{lem : Caccioppoli inequality}, there exists a positive constant $c=c(n,p,q,\nu,L)$ such that
    \begin{align}
        &\lambda^{p-2}S(u,Q_{\rho_1}^\lambda(z_0))\nonumber\\
        &\quad \;\leq \frac{c\rho_2^q}{(\rho_2-\rho_1)^q}\miint{Q_{\rho_2}^\lambda (z_0)}\left[\frac{\Big|u-u_{Q_{\rho_2}^\lambda(z_0)}\Big|^p}{\rho_2^p}+a(z)\frac{\Big|u-u_{Q_{\rho_2}^\lambda(z_0)}\Big|^q}{\rho_2^q}\right] dz\nonumber\\\label{eq : estimation of S in p-intrinsic cylinder}
        &\qquad +\frac{c\rho_2^2\lambda^{p-2}}{(\rho_2-\rho_1)^2}\miint{Q_{\rho_2}^\lambda(z_0)}\frac{\Big|u-u_{Q_{\rho_2}^\lambda(z_0)}\Big|^2}{\rho_2^2}\, dz.
    \end{align}
    To estimate the $p$-term in the first term on the right-hand side of \eqref{eq : estimation of S in p-intrinsic cylinder}, we use Lemma \ref{lem : p-intrinsic parabolic Poincare inequality of p-term in p-intrinsic cylinder} with $\theta=1$ and \eqref{cond : p-phase condition}$_2$. Then we have
    \begin{equation}    \label{eq : estimation u^p in p-intrinsic cylinder}
    \miint{Q_{\rho_2}^\lambda(z_0)}\frac{\Big|u-u_{Q_{\rho_2}^\lambda(z_0)}\Big|^p}{\rho_2^p}\,dz\leq c\lambda^p
    \end{equation}
    for some $c=c(\operatorname{data}_\gamma)>1$. Now, we estimate the $q$-term in the first term. Since $a(\cdot) \in C^{\alpha,\frac{\alpha}{2}}(\Omega_T)$, we get
    $$
    \begin{aligned}
        \miint{Q_{\rho_2}^\lambda(z_0)} a(z)\frac{\Big|u-u_{Q_{\rho_2}^\lambda(z_0)}\Big|^q}{\rho_2^q}\, dz &\leq \miint{Q_{\rho_2}^\lambda(z_0)} \inf_{w\in Q_{\rho_2}^\lambda (z_0)} a(w)\frac{\Big|u-u_{Q_{\rho_2}^\lambda (z_0)}\Big|^q}{\rho_2^q}\, dz\\
        &\quad +2[a]_\alpha \rho_2^\alpha \miint{Q_{\rho_2}^\lambda(z_0)} \frac{\Big|u-u_{Q_{\rho_2}^\lambda (z_0)}\Big|^q}{\rho_2^q}\, dz.
    \end{aligned}
    $$
    By Lemma \ref{lem : p-intrinsic parabolic Poincare inequality of q-term in p-intrinsic cylinder} with $\theta = 1$ and \eqref{cond : p-phase condition}$_2$, we obtain
    $$
    \miint{Q_{\rho_2}^\lambda(z_0)} \inf_{w\in Q_{\rho_2}^\lambda (z_0)} a(w)\frac{\Big|u-u_{Q_{\rho_2}^\lambda (z_0)}\Big|^q}{\rho_2^q}\, dz\leq c \lambda^p
    $$
    for some $c=c(\operatorname{data}_\gamma)>1$. For the remaining term, Lemma \ref{lem : oscillation on intrinsic cylinders}, \eqref{cond : main assumption with gamma}$_3$ and \eqref{eq : estimation u^p in p-intrinsic cylinder} yield
    \begin{equation}\label{eq : estimation u^q in p-intrinsic cylinder}
    \begin{aligned}
        \rho_2^\alpha \miint{Q_{\rho_2}^\lambda(z_0)}\frac{\Big|u-u_{Q_{\rho_2}^\lambda (z_0)}\Big|^q}{\rho_2^q}\, dz &= \rho_2^\alpha \miint{Q_{\rho_2}^\lambda(z_0)}\frac{\Big|u-u_{Q_{\rho_2}^\lambda (z_0)}\Big|^{q-p}}{\rho_2^{q-p}}\cdot\frac{\Big|u-u_{Q_{\rho_2}^\lambda (z_0)}\Big|^{p}}{\rho_2^p}\, dz\\
        &\leq c[u]_\gamma^{q-p}\rho_2^{\alpha-(1-\gamma)(q-p)} \miint{Q_{\rho_2}^\lambda(z_0)}\frac{\Big|u-u_{Q_{\rho_2}^\lambda (z_0)}\Big|^{p}}{\rho_2^p}\, dz\\
        &\leq c\lambda^p
    \end{aligned}
    \end{equation}
    for some $c=c(\operatorname{data}_\gamma)>1$, where we used $\alpha-(1-\gamma)(q-p)\geq 0$ and $\rho_2\leq \operatorname{diam}(\Omega)$. Finally, since $p\geq2$, H\"older's inequality and \eqref{eq : estimation u^p in p-intrinsic cylinder} give
    \begin{equation}\label{eq : estimation u^2 in p-intrinsic cylinder}
    \miint{Q_{\rho_2}^\lambda(z_0)}\frac{\big|u-u_{Q_{\rho_2}^\lambda(z_0)}\big|^2}{\rho_2^2}\,dz
    \leq\left(\miint{Q_{\rho_2}^\lambda(z_0)}\frac{\big|u-u_{Q_{\rho_2}^\lambda(z_0)}\big|^p}{\rho_2^p}\,dz\right)^{2/p}
    \leq c\lambda^2.
    \end{equation}
    Inserting the three moment estimates into \eqref{eq : estimation of S in p-intrinsic cylinder} with $\rho_1=2\rho$ and $\rho_2=4\rho$ yields
    $$
    \lambda^{p-2}S(u,Q_{2\rho}^\lambda(z_0))\leq c\lambda^p,
    $$
    which proves the claim.
\end{proof}
\begin{lemma}\label{lem : estimation of difference of u and mean u in p intrinsic cylinder}
    Let $u$ be a weak solution to \eqref{eq : main equation} with \eqref{cond : main assumption with gamma} and let $\lambda\geq 1$. Then there exist constants $c=c(\operatorname{data}_\gamma)>1$ and $\theta_1=\theta_1(n)\in (0,1)$ such that for any $\theta\in(\theta_1,1)$
    $$
    \begin{aligned}
        &\miint{Q_{2\rho}^\lambda (z_0)}\left[\frac{\Big|u-u_{Q_{2\rho}^\lambda (z_0)}\Big|^p}{(2\rho)^p}+a(z)\frac{\Big|u-u_{Q_{2\rho}^\lambda (z_0)}\Big|^q}{(2\rho)^q}\right] dz\\
        &\quad \leq c\lambda^{(1-\theta)p}\miint{Q_{2\rho}^\lambda (z_0)} \left[\frac{\Big|u-u_{Q_{2\rho}^\lambda (z_0)}\Big|^{\theta p}}{(2\rho)^{\theta p}}+|Du|^{\theta p}\right] dz\\
        &\qquad + c\lambda^{(1-\theta)p}\miint{Q_{2\rho}^\lambda (z_0)}\inf_{w\in Q_{2\rho}^\lambda (z_0)}a(w)^\theta \left[\frac{\Big|u-u_{Q_{2\rho}^\lambda (z_0)}\Big|^{\theta q}}{(2\rho)^{\theta q}}+|Du|^{\theta q}\right] dz,
    \end{aligned}
    $$
    whenever $Q_{2\kappa \rho}^\lambda (z_0)\subset \Omega_T$ satisfies \eqref{cond : p-phase condition}.
\end{lemma}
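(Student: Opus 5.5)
We estimate the left-hand side slice by slice in time. On each time slice we apply the Gagliardo-Nirenberg inequality (Lemma \ref{lem : Gagliardo-Nirenberg inequality}) to $\psi=u-u_{Q_{2\rho}^\lambda(z_0)}$ on $B_{2\rho}(x_0)$. The $L^2$ factor will be controlled by Lemma \ref{lem : estimation of S in p-intrinsic cylinder}, and the $\rho^\alpha$ part of the coefficient by the oscillation bound of Lemma \ref{lem : oscillation on intrinsic cylinders}.

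\textbf{The $p$-term.} We take $p_1=p$, $p_2=\theta p$, $p_3=2$ and $\vartheta=\theta$. The admissibility condition $-\frac np\le \theta-\frac np-(1-\theta)\frac n2$ holds once $\theta$ is close to $1$, that is, for $\theta>\theta_1(n)$ with $\theta_1=n/(n+2)$, uniformly in $p\ge2$. On each slice this gives
\[
\dashint_{B_{2\rho}}\frac{|\psi|^p}{(2\rho)^p}dx\le c\dashint_{B_{2\rho}}\Big[\frac{|\psi|^{\theta p}}{(2\rho)^{\theta p}}+|Du|^{\theta p}\Big]dx\,\Big(\dashint_{B_{2\rho}}\frac{|\psi|^2}{(2\rho)^2}dx\Big)^{\frac{(1-\theta)p}{2}}.
\]
The last factor is bounded by $S(u,Q_{2\rho}^\lambda(z_0))^{(1-\theta)p/2}\le c\lambda^{(1-\theta)p}$ by Lemma \ref{lem : estimation of S in p-intrinsic cylinder}. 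Integrating in time then gives the first line of the right-hand side.

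\textbf{The $q$-term, split of the coefficient.} We write $a(z)\le \inf_{Q_{2\rho}^\lambda}a+2[a]_\alpha(2\rho)^\alpha$, using that $Q_{2\rho}^\lambda(z_0)$ has time length at most $(2\rho)^2$ because $\lambda\ge1$.

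For the infimum part we use Gagliardo-Nirenberg with $p_1=q$, $p_2=\theta q$, $p_3=2$ and $\vartheta=\theta$, obtaining the factor $\lambda^{(1-\theta)q}$. To turn this into $\lambda^{(1-\theta)p}$ we would rather interpolate against the $L^p$-type quantity. We may instead use $p_3=p$ together with Lemma \ref{lem : estimation of S in p-intrinsic cylinder}. We then bound
\[
\Big(\inf a\,\dashint|\psi|^{q}\Big)
\]
using $\inf a\le K\lambda^{p-q}$ from \eqref{cond : p-phase condition}$_1$. Concretely, we write $\inf a=(\inf a)^\theta(\inf a)^{1-\theta}$ and absorb $(\inf a)^{1-\theta}\le (K\lambda^{p-q})^{1-\theta}$ against $\lambda^{(1-\theta)q}$ coming from $S^{(1-\theta)q/2}$. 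This yields exactly $c\lambda^{(1-\theta)p}\,(\inf a)^\theta\dashint[\,|\psi|^{\theta q}/(2\rho)^{\theta q}+|Du|^{\theta q}]$. The admissibility condition for $p_1=q$ and $p_2=\theta q$ again holds for $\theta$ near $1$ depending only on $n$ (with $p_3=2$, the condition is $-\frac nq\le\theta-\frac nq-(1-\theta)\frac n2$, which is the same type as before).

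For the $\rho^\alpha$ part we argue as in \eqref{eq : estimation u^q in p-intrinsic cylinder}. Lemma \ref{lem : oscillation on intrinsic cylinders} and $\alpha\ge(1-\gamma)(q-p)$ give
\[
\rho^\alpha\dashint\frac{|\psi|^q}{\rho^q}\le c\,\rho^{\alpha-(1-\gamma)(q-p)}\dashint\frac{|\psi|^p}{\rho^p}\le c\dashint\frac{|\psi|^p}{\rho^p},
\]
which reduces this part to the $p$-term already treated. This step replaces the Gagliardo-Nirenberg case analysis of the $L^s$ track.

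\textbf{Main obstacle.} The delicate point is the exponent bookkeeping in the $q$-term with the infimum coefficient: the $L^2$ interpolation naturally yields $\lambda^{(1-\theta)q}$, and this must be converted to $\lambda^{(1-\theta)p}$. We will rely on the $p$-phase bound $\inf a\,\lambda^{q}\le K\lambda^p$ for this conversion and keep the weight $(\inf a)^\theta$ inside. We also need to check that $\theta_1$ depends only on $n$ for both choices $p_1\in\{p,q\}$.
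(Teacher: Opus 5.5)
Your proposal is correct and takes essentially the same route as the paper: you split $a(z)\le\inf a+2[a]_\alpha(2\rho)^\alpha$, and you treat the $p$-term and the infimum $q$-term slicewise by Gagliardo--Nirenberg with $p_3=2$ and $S(u,Q^\lambda_{2\rho}(z_0))\le c\lambda^2$, converting $\lambda^{(1-\theta)q}$ into $\lambda^{(1-\theta)p}$ via $(\inf a)^{1-\theta}\lambda^{(1-\theta)q}\le K^{1-\theta}\lambda^{(1-\theta)p}$; the $\rho^\alpha$ part you reduce to the $p$-term through Lemma \ref{lem : oscillation on intrinsic cylinders} and $\alpha\ge(1-\gamma)(q-p)$, with $\theta_1=n/(n+2)$. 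Drop the aside about taking $p_3=p$, since Lemma \ref{lem : estimation of S in p-intrinsic cylinder} controls only a time-supremum of an $L^2$ quantity; the $p_3=2$ absorption argument you then give is the one that works.
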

\begin{proof}
    Since $a(\cdot) \in C^{\alpha,\frac{\alpha}{2}}(\Omega_T)$, we see that
    \begin{align}
        &\miint{Q_{2\rho}^\lambda (z_0)}\left[\frac{\Big|u-u_{Q_{2\rho}^\lambda (z_0)}\Big|^p}{(2\rho)^p}+a(z)\frac{\Big|u-u_{Q_{2\rho}^\lambda (z_0)}\Big|^q}{(2\rho)^q}\right] dz\nonumber\\
        &\quad \leq \miint{Q_{2\rho}^\lambda (z_0)}\frac{\Big|u-u_{Q_{2\rho}^\lambda (z_0)}\Big|^p}{(2\rho)^p}\, dz+ \miint{Q_{2\rho}^\lambda (z_0)} \inf_{w\in Q_{2\rho}^\lambda (z_0)} a(w)\frac{\Big|u-u_{Q_{2\rho}^\lambda (z_0)}\Big|^q}{(2\rho)^q}\, dz\nonumber\\ \label{eq : energy estimate in p-intrinsic cylinder}
        &\qquad +2[a]_\alpha (2\rho)^\alpha \miint{Q_{2\rho}^\lambda (z_0)} \frac{\Big|u-u_{Q_{2\rho}^\lambda (z_0)}\Big|^q}{(2\rho)^q}\, dz.
    \end{align}
    By applying Lemma \ref{lem : Gagliardo-Nirenberg inequality} in the same manner as in \cite[Lemma 4.4]{2023_Gradient_Higher_Integrability_for_Degenerate_Parabolic_Double-Phase_Systems} and using Lemma \ref{lem : estimation of S in p-intrinsic cylinder}, we obtain that for any $\theta\in\left(\frac{n}{n+2},1\right)$,
    \begin{equation}\label{eq : p,q-energy estimate in p-intrinsic cylinder}
        \begin{aligned}
            &\miint{Q_{2\rho}^\lambda (z_0)}\frac{\Big|u-u_{Q_{2\rho}^\lambda (z_0)}\Big|^p}{(2\rho)^p}\, dz+ \miint{Q_{2\rho}^\lambda (z_0)} \inf_{w\in Q_{2\rho}^\lambda (z_0)} a(w)\frac{\Big|u-u_{Q_{2\rho}^\lambda (z_0)}\Big|^q}{(2\rho)^q}\, dz\\
            &\quad \leq c\lambda^{(1-\theta)p}\miint{Q_{2\rho}^\lambda (z_0)}\left[\frac{\Big|u-u_{Q_{2\rho}^\lambda (z_0)}\Big|^{\theta p}}{(2\rho)^{\theta p}}+|Du|^{\theta p}\right] dz\\
            &\qquad + c\lambda^{(1-\theta)p}\miint{Q_{2\rho}^\lambda (z_0)}\inf_{w\in Q_{2\rho}^\lambda (z_0)}a(w)^\theta \left[\frac{\Big|u-u_{Q_{2\rho}^\lambda (z_0)}\Big|^{\theta q}}{(2\rho)^{\theta q}}+|Du|^{\theta q}\right] dz.
        \end{aligned}
    \end{equation}
    Moreover, we obtain from Lemma \ref{lem : oscillation on intrinsic cylinders}, \eqref{cond : main assumption with gamma}$_3$ and \eqref{eq : p,q-energy estimate in p-intrinsic cylinder} that
    $$
    \begin{aligned}
    &(2\rho)^\alpha\miint{Q^\lambda_{2\rho}(z_0)} \frac{\Big|u-u_{Q^\lambda_{2\rho}(z_0)}\Big|^q}{(2\rho)^q}\, dz\\
    &\qquad =(2\rho)^\alpha\miint{Q^\lambda_{2\rho}(z_0)} \frac{\Big|u-u_{Q^\lambda_{2\rho}(z_0)}\Big|^{q-p}}{(2\rho)^{q-p}}\cdot\frac{\Big|u-u_{Q^\lambda_{2\rho}(z_0)}\Big|^p}{(2\rho)^p}\, dz\\
    &\qquad \leq c[u]_\gamma^{q-p}(2\rho)^{\alpha-(1-\gamma)(q-p)} \miint{Q^\lambda_{2\rho}(z_0)}\frac{\Big|u-u_{Q^\lambda_{2\rho}(z_0)}\Big|^p}{(2\rho)^p}\, dz\\
    &\qquad\leq c\lambda^{(1-\theta)p}\miint{Q_{2\rho}^\lambda (z_0)}\left[\frac{\Big|u-u_{Q_{2\rho}^\lambda (z_0)}\Big|^{\theta p}}{(2\rho)^{\theta p}}+|Du|^{\theta p}\right] dz\\
    &\qquad\quad + c\lambda^{(1-\theta)p}\miint{Q_{2\rho}^\lambda (z_0)}\inf_{w\in Q_{2\rho}^\lambda (z_0)}a(w)^\theta \left[\frac{\Big|u-u_{Q_{2\rho}^\lambda (z_0)}\Big|^{\theta q}}{(2\rho)^{\theta q}}+|Du|^{\theta q}\right] dz
    \end{aligned}
    $$
    for some $c=c(\operatorname{data}_\gamma)>1$, where we used $\alpha-(1-\gamma)(q-p)\geq 0$ and $\rho\leq \operatorname{diam}(\Omega)$. Hence, combining the above inequalities with \eqref{eq : energy estimate in p-intrinsic cylinder}, we conclude the desired estimate for any $\theta\in(\theta_1,1)$, where $\theta_1 \coloneq \frac{n}{n+2}\in(0,1)$.
\end{proof}
\begin{lemma}\label{lem : pre reverse Holder inequality for p phase}
    Let $u$ be a weak solution to \eqref{eq : main equation} with \eqref{cond : main assumption with gamma} and let $\lambda\geq 1$. Then there exist constants $c=c(\operatorname{data}_\gamma)>1$ and $\theta_0=\theta_0(n,p,q)\in (0,1)$ such that, for any $\theta\in(\theta_0,1)$,
    $$
    \begin{aligned}
        &\miint{Q_\rho^\lambda(z_0)} H(z,|Du|)\, dz\leq c\left(\miint{Q_{2\rho}^\lambda(z_0)} H(z,|Du|)^\theta\,dz\right)^\frac{1}{\theta},
    \end{aligned}
    $$
    whenever $Q_{2\kappa \rho}^\lambda (z_0)\subset \Omega_T$ satisfies \eqref{cond : p-phase condition}.
\end{lemma}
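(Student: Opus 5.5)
We combine the Caccioppoli inequality with the preceding lemmas to bound the left-hand side by $\lambda^{(1-\theta)p}$ times a $\theta$-energy, and then use the stopping-time identity $\lambda^p=\miint{Q_\rho^\lambda(z_0)}H(z,|Du|)\,dz$ to convert this into the reverse H\"older form.

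First, apply Lemma \ref{lem : Caccioppoli inequality} on $Q_{2\rho}^\lambda(z_0)$ with $r=\rho$, $R=2\rho$, $\ell=\lambda^{2-p}(2\rho)^2$ and $\tau=\lambda^{2-p}\rho^2$. Dropping the supremum term, this gives
$$
\miint{Q_\rho^\lambda(z_0)} H(z,|Du|)\,dz\leq c\miint{Q_{2\rho}^\lambda(z_0)}\left[\frac{|u-u_{Q_{2\rho}^\lambda}|^p}{(2\rho)^p}+a(z)\frac{|u-u_{Q_{2\rho}^\lambda}|^q}{(2\rho)^q}\right]dz + c\lambda^{p-2}\miint{Q_{2\rho}^\lambda(z_0)}\frac{|u-u_{Q_{2\rho}^\lambda}|^2}{(2\rho)^2}\,dz .
$$

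Second, bound the first group by Lemma \ref{lem : estimation of difference of u and mean u in p intrinsic cylinder}. Then apply Lemmas \ref{lem : p-intrinsic parabolic Poincare inequality of p-term in p-intrinsic cylinder} and \ref{lem : p-intrinsic parabolic Poincare inequality of q-term in p-intrinsic cylinder} with $\sigma=2\rho$ and $\theta\in(\max\{\theta_1,\frac{q-1}{q}\},1)$ to remove the $u$-terms. Using $\inf a^\theta|Du|^{\theta q}\leq a(z)^\theta|Du|^{\theta q}$, the first group is at most
$$
c\lambda^{(1-\theta)p}\miint{Q_{2\rho}^\lambda(z_0)}H(z,|Du|)^\theta\,dz .
$$

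Third, treat the $L^2$ term. By H\"older's inequality and Lemma \ref{lem : p-intrinsic parabolic Poincare inequality of p-term in p-intrinsic cylinder}, it is at most
$$
c\lambda^{p-2}\Big(\miint{Q_{2\rho}^\lambda(z_0)} H^\theta\,dz\Big)^{\frac{2}{\theta p}},
$$
provided $\theta p\geq 2$. If $p=2$, or if $\theta p<2$, I would instead use the same Gagliardo--Nirenberg argument as in Lemma \ref{lem : estimation of difference of u and mean u in p intrinsic cylinder}, together with $S\le c\lambda^2$ from Lemma \ref{lem : estimation of S in p-intrinsic cylinder}, to get $c\lambda^{p-2}\lambda^{2(1-\theta)\cdot(\dots)}(\dots)$; this is the standard route of \cite[Lemma 4.5]{2023_Gradient_Higher_Integrability_for_Degenerate_Parabolic_Double-Phase_Systems}. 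In either case, Young's inequality with exponents $\frac{p}{p-2}$ and $\frac p2$ gives
$$
\varepsilon\lambda^p + c_\varepsilon \Big(\miint{Q_{2\rho}^\lambda(z_0)} H^\theta\,dz\Big)^{1/\theta}.
$$

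Finally, combine the pieces. Write $\lambda^p=\miint{Q_\rho^\lambda}H\,dz$ by \eqref{cond : p-phase condition}$_3$. For the first group, Young's inequality gives
$$
\lambda^{(1-\theta)p}X\leq \varepsilon\lambda^p+c_\varepsilon X^{1/\theta},\qquad X=\miint{Q_{2\rho}^\lambda}H^\theta\,dz .
$$
With $\varepsilon$ small, absorb $2\varepsilon\lambda^p=2\varepsilon\miint{Q_\rho^\lambda}H$ into the left-hand side. The threshold $\theta_0$ collects $\theta_1=\frac{n}{n+2}$, $\frac{q-1}{q}$ and the Gagliardo--Nirenberg restriction, so it depends only on $(n,p,q)$.

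The main obstacle is the $L^2$ parabolic term when $\theta p<2$, which needs the interpolation using the sup bound $S$. The H\"older continuity of $u$ enters only through the earlier lemmas.
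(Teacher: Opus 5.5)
Your proposal is correct and is essentially the paper's proof. The steps match: Caccioppoli from $Q^\lambda_{2\rho}(z_0)$ to $Q^\lambda_\rho(z_0)$; Lemma~\ref{lem : estimation of difference of u and mean u in p intrinsic cylinder} with Lemmas~\ref{lem : p-intrinsic parabolic Poincare inequality of p-term in p-intrinsic cylinder} and~\ref{lem : p-intrinsic parabolic Poincare inequality of q-term in p-intrinsic cylinder} for the $p$/$q$-terms; \cite[Lemma 4.5]{2023_Gradient_Higher_Integrability_for_Degenerate_Parabolic_Double-Phase_Systems} for the $L^2$-term; Young's inequality; and absorption via $\lambda^p=\miint{Q_\rho^\lambda(z_0)}H(z,|Du|)\,dz$. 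Your H\"older shortcut for $\theta p\geq 2$ is valid but optional.

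One small slip: in the Gagliardo--Nirenberg branch the $L^2$-term is bounded by $c\lambda^{p-1}\big(\miint{Q_{2\rho}^\lambda(z_0)}H(z,|Du|)^\theta\,dz\big)^{1/(\theta p)}$. So Young's inequality must be applied with exponents $\frac{p}{p-1}$ and $p$, not $\frac{p}{p-2}$ and $\frac p2$, which is meaningless for $p=2$.
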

\begin{proof}
    By Lemma \ref{lem : Caccioppoli inequality}, we get
    $$
    \begin{aligned}
    \miint{Q_\rho^\lambda(z_0)} H(z,|Du|)\, dz &\leq c \miint{Q_{2\rho}^\lambda(z_0)} \left[\frac{\Big|u-u_{Q_{2\rho}^\lambda(z_0)}\Big|^p}{(2\rho)^p}+a(z)\frac{\Big|u-u_{Q_{2\rho}^\lambda(z_0)}\Big|^q}{(2\rho)^q}\right] dz\\
    &\quad +c \lambda^{p-2} \miint{Q_{2\rho}^\lambda (z_0)} \frac{\Big|u-u_{Q_{2\rho}^\lambda(z_0)}\Big|^2}{(2\rho)^2}\, dz
    \end{aligned}
    $$
    for some $c=c(n,p,q,\nu,L)>1$. Using Lemmas \ref{lem : p-intrinsic parabolic Poincare inequality of p-term in p-intrinsic cylinder}, \ref{lem : p-intrinsic parabolic Poincare inequality of q-term in p-intrinsic cylinder}, \ref{lem : estimation of difference of u and mean u in p intrinsic cylinder} and Young's inequality yields
    $$
    \begin{aligned}
        &c\miint{Q_{2\rho}^\lambda(z_0)} \left[\frac{\Big|u-u_{Q_{2\rho}^\lambda(z_0)}\Big|^p}{(2\rho)^p}+a(z)\frac{\Big|u-u_{Q_{2\rho}^\lambda(z_0)}\Big|^q}{(2\rho)^q}\right] dz\\
        &\qquad \leq c_0\lambda^{p(1-\theta)}\miint{Q_{2\rho}^\lambda (z_0)} H(z,|Du|)^\theta \,dz\\
        &\qquad \leq \frac{1}{4}\lambda^p +c\left(\miint{Q_{2\rho}^\lambda (z_0)} H(z,|Du|)^\theta \,dz\right)^\frac{1}{\theta}
    \end{aligned}
    $$
    for any $\theta\in (\theta_2,1)$ and for some $c_0,\,c=c(\operatorname{data}_\gamma)>1$, where $\theta_2\coloneq\max\{\frac{n}{n+2},\frac{q-1}{q}\}$. Next, by \cite[Lemma 4.5]{2023_Gradient_Higher_Integrability_for_Degenerate_Parabolic_Double-Phase_Systems}, we obtain
    $$
    \miint{Q_{2\rho}^\lambda (z_0)} \frac{\Big|u-u_{Q_{2\rho}^\lambda(z_0)}\Big|^2}{(2\rho)^2}\, dz\leq c \lambda\left(\miint{Q_{2\rho}^\lambda(z_0)}\left[\frac{\Big|u-u_{Q_{2\rho}^\lambda(z_0)}\Big|^{\theta p}}{(2\rho)^{\theta p}}+|Du|^{\theta p}\right] dz\right)^\frac{1}{\theta p}
    $$
    for some $c=c(\operatorname{data}_\gamma)>1$ and for any $\theta\in (\frac{2n}{(n+2)p},1)$. It follows from Lemma \ref{lem : p-intrinsic parabolic Poincare inequality of p-term in p-intrinsic cylinder} and Young's inequality that
    $$
    \begin{aligned}
        &c\lambda^{p-2}\miint{Q_{2\rho}^\lambda (z_0)} \frac{\Big|u-u_{Q_{2\rho}^\lambda(z_0)}\Big|^2}{(2\rho)^2}\, dz\\
        &\quad \leq c \lambda^{p-1}\left(\miint{Q_{2\rho}^\lambda(z_0)}\left[\frac{\Big|u-u_{Q_{2\rho}^\lambda(z_0)}\Big|^{\theta p}}{(2\rho)^{\theta p}}+|Du|^{\theta p}\right] dz\right)^\frac{1}{\theta p}\\
        &\quad \leq \frac{1}{4}\lambda^p + c\left(\miint{Q_{2\rho}^\lambda (z_0)} H(z,|Du|)^\theta \,dz\right)^\frac{1}{\theta}.
    \end{aligned}
    $$
    By the third condition in \eqref{cond : p-phase condition}, we conclude that
    $$
    \miint{Q_\rho^\lambda(z_0)} H(z,|Du|)\, dz\leq c\left(\miint{Q_{2\rho}^\lambda (z_0)} H(z,|Du|)^\theta \,dz\right)^\frac{1}{\theta}
    $$
    for any $\theta \in (\theta_0,1)$, where $\theta_0\coloneq\max\{\theta_2,\frac{2n}{(n+2)p}\}=\max\{\frac{n}{n+2},\frac{q-1}{q}\}$.
\end{proof}
Arguing as in \cite[Lemma 4.6]{2023_Gradient_Higher_Integrability_for_Degenerate_Parabolic_Double-Phase_Systems}, we obtain the following consequence.
\begin{lemma}\label{lem : reverse Holder inequality for p phase}
    Let $u$ be a weak solution to \eqref{eq : main equation} with \eqref{cond : main assumption with gamma} and let $\lambda\geq 1$. Then there exist constants $c=c(\operatorname{data}_\gamma)>1$ and $\theta_0=\theta_0(n,p,q)\in(0,1)$ such that, for any $\theta\in(\theta_0,1)$,
    $$
    \iints{Q_{2\kappa\rho}^\lambda (z_0)} H(z,|Du|)\, dz\leq c\Lambda^{1-\theta}\iints{Q_{2\rho}^\lambda (z_0)\cap \Psi(c^{-1}\Lambda)} H(z,|Du|)^\theta \, dz,
    $$
    whenever $Q_{2\kappa \rho}^\lambda (z_0)\subset \Omega_T$ satisfies \eqref{cond : p-phase condition}.
\end{lemma}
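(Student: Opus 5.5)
The plan is the standard level-set splitting argument applied to the reverse Hölder inequality of Lemma \ref{lem : pre reverse Holder inequality for p phase}, converting it into an estimate on super-level sets. Here $\Lambda=H_{z_0}(\lambda)$, and \eqref{cond : p-phase condition}$_1$ gives
$$
\lambda^p\leq\Lambda\leq(1+K)\lambda^p .
$$
So it suffices to bound $\iints{Q_{2\kappa\rho}^\lambda(z_0)} H(z,|Du|)\,dz$ by $c\lambda^{p(1-\theta)}$ times the truncated integral.

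\textbf{Step 1: reduce to $Q_\rho^\lambda(z_0)$.} By \eqref{cond : p-phase condition}$_2$ with $\sigma=2\kappa\rho$ and \eqref{cond : p-phase condition}$_3$,
$$
\iints{Q_{2\kappa\rho}^\lambda(z_0)} H\,dz< \lambda^p|Q_{2\kappa\rho}^\lambda(z_0)| = (2\kappa)^{n+2}\lambda^p|Q_\rho^\lambda(z_0)| = (2\kappa)^{n+2}\iints{Q_\rho^\lambda(z_0)}H\,dz .
$$
Since $\kappa$ depends only on $\operatorname{data}_\gamma$, it is enough to estimate $\lambda^p|Q_\rho^\lambda(z_0)|$.

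\textbf{Step 2: apply the pre-reverse Hölder inequality and split.} Lemma \ref{lem : pre reverse Holder inequality for p phase}, combined with \eqref{cond : p-phase condition}$_3$, gives
$$
\lambda^{p\theta}\leq c_1\miint{Q_{2\rho}^\lambda(z_0)} H(z,|Du|)^\theta\,dz .
$$
Fix a small $\epsilon>0$ and split the integral over $Q_{2\rho}^\lambda(z_0)$ into the parts where $H\leq\epsilon\lambda^p$ and where $H>\epsilon\lambda^p$. The first part contributes at most $\epsilon^\theta\lambda^{p\theta}|Q_{2\rho}^\lambda(z_0)|$. Choosing $\epsilon$ with $c_1\epsilon^\theta\leq\frac12$, this term is absorbed, leaving
$$
\lambda^{p\theta}|Q_{2\rho}^\lambda(z_0)|\leq 2c_1\iints{Q_{2\rho}^\lambda(z_0)\cap\{H>\epsilon\lambda^p\}} H^\theta\,dz .
$$
The choice of $\epsilon$ should be made uniform in $\theta\in(\theta_0,1)$, for instance via $\epsilon^{\theta}\leq\epsilon^{\theta_0}$ when $\epsilon<1$.

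\textbf{Step 3: pass to the level set of $\Lambda$ and conclude.} Since $\Lambda\leq(1+K)\lambda^p$, we have $\{H>\epsilon\lambda^p\}\supset\Psi(\epsilon(1+K)^{-1}\Lambda)$, and the two sets coincide once we set $c^{-1}=\epsilon/(1+K)$. Multiplying by $\lambda^{p(1-\theta)}\leq\Lambda^{1-\theta}$ and using $|Q_\rho^\lambda|\leq|Q_{2\rho}^\lambda|$ together with Step 1 yields the claim, after enlarging $c$ so that the same constant works in both places.

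\textbf{Main obstacle.} There is essentially no genuine obstacle. The only delicate point is bookkeeping: the constant $c$ both multiplies the right-hand side and defines the level $c^{-1}\Lambda$, so it must be taken as the maximum of the two required constants. This is legitimate because enlarging $c$ only shrinks $\Psi(c^{-1}\Lambda)$... which works in the wrong direction. Therefore the level constant must be fixed first, as $\epsilon/(1+K)$, and the multiplicative constant chosen afterwards to be at least that reciprocal.
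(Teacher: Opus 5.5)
Your argument is correct and is the one the paper imports from \cite[Lemma 4.6]{2023_Gradient_Higher_Integrability_for_Degenerate_Parabolic_Double-Phase_Systems}. It bounds the integral over $Q_{2\kappa\rho}^\lambda(z_0)$ by $(2\kappa)^{n+2}\lambda^p|Q_\rho^\lambda(z_0)|$ via the stopping-time conditions, feeds in Lemma \ref{lem : pre reverse Holder inequality for p phase}, splits off and absorbs the small values of $H$, and uses $\lambda^p\leq\Lambda\leq(1+K)\lambda^p$.

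Two directional slips need fixing, though neither affects validity. First, since $\epsilon(1+K)^{-1}\Lambda\leq\epsilon\lambda^p$, the correct inclusion in Step 3 is $\{H>\epsilon\lambda^p\}\subset\Psi(\epsilon(1+K)^{-1}\Lambda)$, not $\supset$; the two sets need not coincide, and $\subset$ is exactly the direction you need. Second, enlarging $c$ lowers the level $c^{-1}\Lambda$ and so enlarges $\Psi(c^{-1}\Lambda)$. Taking $c=\max\{2c_1(2\kappa)^{n+2},(1+K)/\epsilon\}$ therefore works directly, and the ``main obstacle'' you describe does not exist.
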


\subsection{The $(p,q)$-phase case.}
In this case, we consider the $(p,q)$-intrinsic cylinder denoted in \eqref{eq : definition of p,q-intrinsic cylinder}. We write
$$
S(u,G_\rho^\lambda(z_0))=\sup_{J_\rho^\lambda (t_0)}\dashint_{B_\rho (x_0)}\frac{\Big|u-u_{G_{\rho}^\lambda(z_0)}\Big|^2}{\rho^2}\, dx.
$$
As in \cite{KimOh2026}, the estimates in the $(p,q)$-phase do not involve the assumption on $u$: by \eqref{cond : p,q-phase condition}, the coefficient $a(\cdot)$ is comparable to $a(z_0)$ on $G_{4\rho}^\lambda(z_0)$, so that the full structure function $H_{z_0}$ is available and the following results hold with constants independent of $\|u\|_{C^{\gamma,\frac{\gamma}{2}}(\Omega_T)}$. The estimate integrated over $G_{2\kappa\rho}^\lambda$ additionally contains the dilation factor $\kappa^{n+2}$, which depends on $\operatorname{data}_\gamma$. For the proofs we refer to \cite[Section 4]{2023_Gradient_Higher_Integrability_for_Degenerate_Parabolic_Double-Phase_Systems} and \cite[Section 5]{KimOh2026}.
\begin{lemma}\label{lem : estimation of S in p,q-intrinsic cylinder}
    Let $u$ be a weak solution to \eqref{eq : main equation} with \eqref{cond : main assumption with gamma}. Then there exists a constant $c=c(n,p,q,\nu,L)>1$ such that
    $$
    S(u,G_{2\rho}^\lambda(z_0))\leq c\lambda^2,
    $$
    whenever $G_{2\kappa \rho}^\lambda (z_0)\subset \Omega_T$ satisfies \eqref{cond : p,q-phase condition}.
\end{lemma}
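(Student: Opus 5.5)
We mirror the proof of Lemma \ref{lem : estimation of S in p-intrinsic cylinder}, now on the $(p,q)$-intrinsic cylinders $G^\lambda_\sigma(z_0)$, where the time scale is $\lambda^2/H_{z_0}(\lambda)$. Fix $2\rho\leq\rho_1<\rho_2\leq4\rho$ and apply Lemma \ref{lem : Caccioppoli inequality} with $R=\rho_2$, $r=\rho_1$, $\ell=\frac{\lambda^2}{H_{z_0}(\lambda)}\rho_2^2$ and $\tau=\frac{\lambda^2}{H_{z_0}(\lambda)}\rho_1^2$ (so $\tau\geq\ell/4$). After multiplying the time-sup term by $\tau$ and dividing by $\rho_1^2$, this gives
$$
\frac{H_{z_0}(\lambda)}{\lambda^2}S(u,G^\lambda_{\rho_1}(z_0))\leq \frac{c\rho_2^q}{(\rho_2-\rho_1)^q}\miint{G^\lambda_{\rho_2}(z_0)}\left[\frac{|u-u_{G^\lambda_{\rho_2}}|^p}{\rho_2^p}+a(z)\frac{|u-u_{G^\lambda_{\rho_2}}|^q}{\rho_2^q}\right]dz+\frac{c\rho_2^2H_{z_0}(\lambda)}{(\rho_2-\rho_1)^2\lambda^2}\miint{G^\lambda_{\rho_2}(z_0)}\frac{|u-u_{G^\lambda_{\rho_2}}|^2}{\rho_2^2}dz .
$$
We then take $\rho_1=2\rho$, $\rho_2=4\rho$, so all prefactors are absolute constants.

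For the first term we use \eqref{cond : p,q-phase condition}$_1$, which gives $a(z)\leq2a(z_0)$ on $G^\lambda_{4\rho}\subset Q_{10\rho}(z_0)$. This bounds the bracket by $2H_{z_0}(|u-u_{G}|/\rho_2)$. Lemma \ref{lem : p,q-intrinsic parabolic Poincare inequality} with $\theta=1$ (admissible since $\frac{q-1}{p}<1$) then bounds the integral by $c\miint{G^\lambda_{4\rho}}H_{z_0}(|Du|)\,dz$. Using $H_{z_0}(|Du|)\leq 2H(z,|Du|)$, which follows from $a(z_0)\leq 2a(z)$, together with \eqref{cond : p,q-phase condition}$_2$ at $\sigma=4\rho\in(\rho,2\kappa\rho]$, the first term is at most $cH_{z_0}(\lambda)$.

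For the second term we use Lemma \ref{lem : p,q-intrinsic parabolic Poincare inequality of p-term} with $\theta=1$, giving $\miint{G}|u-u_G|^p/\rho_2^p\leq c\miint{G}|Du|^p$. We then apply H\"older with $p\geq2$ and \eqref{cond : simple p,q-phase condition} (valid since $q\geq p$) to get
$$
\miint{G}\frac{|u-u_G|^2}{\rho_2^2}dz\leq\Big(c\miint{G}|Du|^p\Big)^{2/p}\leq c\Big(\miint{G}|Du|^q\Big)^{2/q}\leq c\lambda^2 .
$$
Hence the second term is at most $cH_{z_0}(\lambda)$ as well.

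Dividing by $H_{z_0}(\lambda)/\lambda^2$ then gives $S(u,G^\lambda_{2\rho}(z_0))\leq c\lambda^2$. The constant depends only on $n,p,q,\nu,L$, because no oscillation bound on $u$ and no Hölder seminorm of $a$ is used. The main point to check is the bookkeeping of the time scale in Lemma \ref{lem : Caccioppoli inequality}, namely that the factor $1/(\ell-\tau)$ produces exactly $H_{z_0}(\lambda)/\lambda^2$ times $\rho^{-2}$. The other step needing care is using the $p$-term Poincaré inequality together with \eqref{cond : simple p,q-phase condition}, rather than the pointwise energy, to control the $L^2$ term by $\lambda^2$. Both steps go through since $\rho_2-\rho_1=2\rho$ is comparable to $\rho_2$.
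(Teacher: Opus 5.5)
Your proof is correct and is essentially the argument the paper defers to in \cite{2023_Gradient_Higher_Integrability_for_Degenerate_Parabolic_Double-Phase_Systems}. You apply Caccioppoli on the $G$-cylinders, control the energy term by the coefficient comparison and the $(p,q)$-intrinsic Poincar\'e inequality with $\theta=1$, and control the $L^2$ term by H\"older, the $p$-term Poincar\'e inequality and \eqref{cond : simple p,q-phase condition}; as in the paper's $p$-phase analogue, no iteration is needed. The only unstated detail is that the inclusion $G^\lambda_{4\rho}(z_0)\subset Q_{10\rho}(z_0)$, which you need for the comparison $a\leq 2a(z_0)$, uses $\lambda\geq1$; this holds for every cylinder produced by the stopping-time argument.
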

\begin{lemma}\label{lem : estimation of difference of u and mean u in p,q intrinsic cylinder}
    Let $u$ be a weak solution to \eqref{eq : main equation} with \eqref{cond : main assumption with gamma}. Then there exists a constant $c=c(n,p,q)>1$ such that for any $\theta\in(\frac{n}{n+2},1)$,
    $$
    \begin{aligned}
        &\miint{G_{2\rho}^\lambda (z_0)} \left[\frac{\Big|u-u_{G_{2\rho}^\lambda(z_0)}\Big|^p}{(2\rho)^p}+a(z)\frac{\Big|u-u_{G_{2\rho}^\lambda(z_0)}\Big|^q}{(2\rho)^q}\right] dz\\
        &\qquad \leq c\miint{G_{2\rho}^\lambda (z_0)} \left[H_{z_0}^\theta\left(\frac{\Big|u-u_{G_{2\rho}^\lambda(z_0)}\Big|}{2\rho}\right)+H_{z_0}^\theta (|Du|)\right] dz\\
        &\qquad\qquad \times H_{z_0}^{1-\theta}\left(S(u,G_{2\rho}^\lambda (z_0))^\frac{1}{2}\right),
    \end{aligned}
    $$
    whenever $G_{2\kappa \rho}^\lambda (z_0)\subset\Omega_T$ satisfies \eqref{cond : p,q-phase condition}.
\end{lemma}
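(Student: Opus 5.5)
Since in \eqref{cond : p,q-phase condition} the coefficient is frozen up to a factor two on $Q_{10\rho}(z_0)\supset G_{2\rho}^\lambda(z_0)$, I will first replace $a(z)$ by $2a(z_0)$ on the left-hand side. This bounds the left-hand side by $2\miint{G_{2\rho}^\lambda(z_0)} H_{z_0}\bigl(|u-u_{G_{2\rho}^\lambda(z_0)}|/(2\rho)\bigr)\,dz$. Everything is thereby reduced to a slicewise estimate for the single Young function $H_{z_0}$, and the assumption on $u$ plays no role. Write $v=u-u_{G_{2\rho}^\lambda(z_0)}$ and $S=S(u,G_{2\rho}^\lambda(z_0))$. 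On each time slice $t\in J_{2\rho}^\lambda(t_0)$ I treat the $p$-part and the $q$-part separately, applying Lemma \ref{lem : Gagliardo-Nirenberg inequality} on $B_{2\rho}(x_0)$ to $\psi=v(\cdot,t)$.

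For the $p$-part I take $p_1=p$, $p_2=\theta p$, $p_3=2$ and $\vartheta=\theta$. The dimensional condition reads $-\frac np\le\theta-\frac n p-(1-\theta)\frac n2$, that is, $\theta(1+\frac n2)\ge\frac n2$, which is exactly $\theta\ge\frac{n}{n+2}$. The lemma then gives
$$\dashint_{B_{2\rho}}\frac{|v|^p}{(2\rho)^p}dx\le c\dashint_{B_{2\rho}}\Bigl[\frac{|v|^{\theta p}}{(2\rho)^{\theta p}}+|Dv|^{\theta p}\Bigr]dx\;\Bigl(\dashint_{B_{2\rho}}\frac{|v|^2}{(2\rho)^2}dx\Bigr)^{\frac{(1-\theta)p}{2}}.$$
The last factor is at most $S^{(1-\theta)p/2}$. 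The $q$-part is handled the same way, with $p_1=q$, $p_2=\theta q$, $p_3=2$ and $\vartheta=\theta$. Here the condition becomes $-\frac nq\le\theta-\frac nq-(1-\theta)\frac n2$, again $\theta\ge\frac n{n+2}$, and after multiplying by $a(z_0)$ we obtain the factor $a(z_0)^\theta(\cdots)\cdot a(z_0)^{1-\theta}S^{(1-\theta)q/2}$.

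Next I integrate in time and average over $J_{2\rho}^\lambda(t_0)$, pulling out $S$ through the supremum. Both $S^{(1-\theta)p/2}$ and $a(z_0)^{1-\theta}S^{(1-\theta)q/2}$ are bounded by $H_{z_0}^{1-\theta}(S^{1/2})$. Likewise $\miint{}(|v|^{\theta p}/(2\rho)^{\theta p}+|Du|^{\theta p})$ and $a(z_0)^\theta\miint{}(\cdots)^{\theta q}$ are each dominated, up to a constant depending on $p,q$ (from $(s+t)^\theta\le s^\theta+t^\theta$ and $H_{z_0}^\theta\ge$ each term to the power $\theta$), by $\miint{}[H_{z_0}^\theta(|v|/2\rho)+H_{z_0}^\theta(|Du|)]$. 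Adding the two parts gives the claim with $c=c(n,p,q)$.

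The main obstacle is purely technical. We need to check that the Gagliardo–Nirenberg constants depend only on $n$ and $p_1\in\{p,q\}$, and that the requirement $p_2=\theta p\ge1$ holds; it does, since $\theta>\frac n{n+2}\ge\frac12$ and $p\ge2$. Unlike the $p$-phase lemma, no term with $[a]_\alpha\rho^\alpha$ appears, because the comparability $\frac{a(z_0)}{2}\le a\le2a(z_0)$ is built into \eqref{cond : p,q-phase condition}.
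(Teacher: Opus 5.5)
Your proposal is correct and is essentially the argument the paper defers to (Kim--Kinnunen--Moring, Section 4). That argument freezes the coefficient via $\frac{a(z_0)}{2}\le a\le 2a(z_0)$ and applies the Gagliardo--Nirenberg inequality slicewise with $(p_1,p_2,p_3,\vartheta)=(p,\theta p,2,\theta)$ and $(q,\theta q,2,\theta)$. It then pulls out $S$ through the supremum and dominates $S^{(1-\theta)p/2}$ and $a(z_0)^{1-\theta}S^{(1-\theta)q/2}$ by $H_{z_0}^{1-\theta}(S^{1/2})$. You should make explicit that the inclusion $G_{2\rho}^\lambda(z_0)\subset Q_{10\rho}(z_0)$ uses $\lambda\ge 1$, so that $\lambda^2/H_{z_0}(\lambda)\le\lambda^{2-p}\le 1$; this holds for every cylinder produced by the stopping-time argument.
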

\begin{lemma}\label{lem : reverse Holder inequality for p,q phase}
    Let $u$ be a weak solution to \eqref{eq : main equation} with \eqref{cond : main assumption with gamma}. Then there exist constants $c=c(n,p,q,\nu,L)>1$ and $\theta_0=\theta_0(n,p,q)\in(0,1)$ such that for any $\theta\in(\theta_0,1)$,
    $$
    \miint{G_\rho^\lambda(z_0)} H_{z_0}(|Du|)\, dz\leq c\left(\miint{G_{2\rho}^\lambda(z_0)}H_{z_0}^\theta(|Du|)\, dz\right)^\frac{1}{\theta},
    $$
    whenever $G_{2\kappa \rho}^\lambda(z_0)\subset \Omega_T$ satisfies \eqref{cond : p,q-phase condition}. One may take $\theta_0=\max\{n/(n+2),(q-1)/p\}<1$. Moreover, with $\Lambda=H_{z_0}(\lambda)$, we have
    $$
    \iints{G_{2\kappa \rho}^\lambda (z_0)} H(z,|Du|)\, dz\leq c\kappa^{n+2}\Lambda^{1-\theta}\iints{G_{2\rho}^\lambda (z_0)\cap \Psi(c^{-1}\Lambda)} H(z,|Du|)^\theta\, dz.
    $$
\end{lemma}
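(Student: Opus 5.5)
We prove Lemma \ref{lem : reverse Holder inequality for p,q phase} along the lines of the $p$-phase argument (Lemmas \ref{lem : pre reverse Holder inequality for p phase} and \ref{lem : reverse Holder inequality for p phase}), now in the cylinders $G^\lambda_\rho(z_0)$ and with $H_{z_0}$ as the structure function. By \eqref{cond : p,q-phase condition}$_1$ we have $a(z)\le 2a(z_0)$ on $Q_{10\rho}(z_0)\supset G^\lambda_{2\rho}(z_0)$, so every occurrence of $a(z)$ may be replaced by $a(z_0)$ at the cost of a factor $2$.

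\emph{Step 1: Caccioppoli.} Apply Lemma \ref{lem : Caccioppoli inequality} with $R=2\rho$, $r=\rho$, $\ell=\lambda^2\rho^2\cdot 4/H_{z_0}(\lambda)$ and $\tau=\ell/4$. This gives
\[
\miint{G_\rho^\lambda(z_0)} H_{z_0}(|Du|)\,dz\le c\,\mathrm{I}+c\,\frac{H_{z_0}(\lambda)}{\lambda^2}\miint{G_{2\rho}^\lambda(z_0)}\frac{|u-u_{G_{2\rho}^\lambda(z_0)}|^2}{(2\rho)^2}\,dz,
\]
where $\mathrm{I}$ is the $p$/$q$ term.

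\emph{Step 2: the $p$/$q$ term.} For $\mathrm{I}$ we use Lemma \ref{lem : estimation of difference of u and mean u in p,q intrinsic cylinder} together with Lemma \ref{lem : estimation of S in p,q-intrinsic cylinder}. The factor $H_{z_0}^{1-\theta}(S^{1/2})$ is then bounded by $cH_{z_0}(\lambda)^{1-\theta}$. Lemma \ref{lem : p,q-intrinsic parabolic Poincare inequality} removes the $u$-term; this needs $\theta>(q-1)/p$. We obtain
\[
\mathrm{I}\le cH_{z_0}(\lambda)^{1-\theta}\miint{G_{2\rho}^\lambda(z_0)} H_{z_0}^\theta(|Du|)\,dz,
\]
and Young's inequality turns this into
\[
\mathrm{I}\le\tfrac14 H_{z_0}(\lambda)+c\Bigl(\miint{G_{2\rho}^\lambda(z_0)} H_{z_0}^\theta(|Du|)\,dz\Bigr)^{1/\theta}.
\]

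\emph{Step 3: the $L^2$ term.} This is the step I expect to be the main obstacle. Use Lemma \ref{lem : Gagliardo-Nirenberg inequality} slicewise with $p_1=2$, $p_2=\theta p$, $p_3=2$, combined with $S\le c\lambda^2$, as in \cite[Lemma 4.5]{2023_Gradient_Higher_Integrability_for_Degenerate_Parabolic_Double-Phase_Systems}. This yields
\[
\miint{G_{2\rho}^\lambda(z_0)}\frac{|u-u_{G_{2\rho}^\lambda(z_0)}|^2}{(2\rho)^2}\,dz\le c\lambda\Bigl(\miint{G_{2\rho}^\lambda(z_0)}\Bigl[\frac{|u-u_{G_{2\rho}^\lambda(z_0)}|^{\theta p}}{(2\rho)^{\theta p}}+|Du|^{\theta p}\Bigr]dz\Bigr)^{1/(\theta p)},
\]
with the $u$-term then removed by Lemma \ref{lem : p,q-intrinsic parabolic Poincare inequality of p-term}. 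A bound in terms of $p$-moments alone is too weak, because the prefactor is $H_{z_0}(\lambda)/\lambda$. So I would run the same argument with $p$ replaced by $q$ as well, and weight it by $a(z_0)$, so that the resulting quantity $Y$ satisfies $H_{z_0}(Y)\lesssim \miint{} H_{z_0}^\theta(|Du|)\,dz$ raised to the power $1/\theta$. Then
\[
\frac{H_{z_0}(\lambda)}{\lambda}\,Y\le\frac{H_{z_0}(\lambda)}{\lambda}\,\lambda^{1-\theta}Y^{\theta},
\]
and Young's inequality, applied separately to the $\lambda^p$ and $a(z_0)\lambda^q$ parts, bounds this by $\tfrac14H_{z_0}(\lambda)+c(\cdots)^{1/\theta}$.

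\emph{Step 4: absorption and the level-set form.} Use \eqref{cond : p,q-phase condition}$_3$, namely $H_{z_0}(\lambda)=\miint{G_\rho^\lambda}H$, together with $H(z,\cdot)\ge\tfrac12H_{z_0}$, to absorb the $\tfrac12H_{z_0}(\lambda)$ terms. This gives the first claim with $\theta_0=\max\{n/(n+2),(q-1)/p\}$.

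For the integral form, first use \eqref{cond : p,q-phase condition}$_2$ with $\sigma=2\kappa\rho$ and compare volumes:
\[
\iints{G_{2\kappa\rho}^\lambda(z_0)}H\,dz\le (2\kappa)^{n+2}|G_\rho^\lambda|\,H_{z_0}(\lambda)=c\kappa^{n+2}\iints{G^\lambda_\rho(z_0)}H\,dz.
\]
Then combine the reverse Hölder inequality with \eqref{cond : p,q-phase condition}$_2$ on $G^\lambda_{2\rho}$. Split $\miint{G^\lambda_{2\rho}}H^\theta$ into the parts over $\Psi(\epsilon\Lambda)$ and its complement. The complement contributes at most $(\epsilon\Lambda)^\theta$, which is absorbed for small $\epsilon$. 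On the remaining part, $H^{\theta}\cdot(\cdots)^{1/\theta-1}\le \Lambda^{1-\theta}H^\theta$ by \eqref{cond : p,q-phase condition}$_2$, which gives the stated estimate.
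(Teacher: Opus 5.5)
Your proposal is correct and follows essentially the same route as the argument the paper defers to in \cite[Section 4]{2023_Gradient_Higher_Integrability_for_Degenerate_Parabolic_Double-Phase_Systems} and \cite[Section 5]{KimOh2026}: Caccioppoli on $G^\lambda_{2\rho}(z_0)$, then the $p$/$q$ term via Lemma \ref{lem : estimation of difference of u and mean u in p,q intrinsic cylinder}, $S\le c\lambda^2$ and the $(p,q)$-Poincar\'e inequality (which is where $\theta>(q-1)/p$ enters), then the $L^2$ term via Gagliardo--Nirenberg, absorption through \eqref{cond : p,q-phase condition}$_3$, and finally the standard level-set splitting. One remark: the $p$-moment bound in Step 3 is not actually too weak, because with $Y_p=\bigl(\miint{G_{2\rho}^\lambda(z_0)}|Du|^{\theta p}\,dz\bigr)^{1/(\theta p)}$ Jensen gives $a(z_0)Y_p^q\le\bigl(\miint{G_{2\rho}^\lambda(z_0)}a(z_0)^\theta|Du|^{\theta q}\,dz\bigr)^{1/\theta}$, so Young applied separately to $\lambda^{p-1}Y_p$ and $a(z_0)\lambda^{q-1}Y_p$ closes directly; your detour via weighted $q$-moments and $\lambda^{1-\theta}Y^\theta$ is therefore harmless but unnecessary, and the latter tacitly uses $Y\lesssim\lambda$, which holds by \eqref{cond : simple p,q-phase condition}.
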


\section{\bf Proof of the main result}\label{section 6}
\subsection{\bf Vitali type covering lemma}\label{subsection 6.1}
To prove a Vitali type covering lemma for the collection of intrinsic cylinders, we write them as
$$
\mathcal{Q}(w)=\left\{
\begin{aligned}
    &Q^{\lambda_w}_{2\varrho_w}(w) &\text{if \eqref{case : p-phase} holds,}\\
    &G^{\lambda_w}_{2\varsigma_w}(w) &\text{if \eqref{case : p,q-phase} holds,}
\end{aligned}\right.
$$
where $\lambda_w,\, \varrho_w$ and $\varsigma_w$ are defined in Section \ref{section 3}. As in \cite[Subsection 5.2]{2023_Gradient_Higher_Integrability_for_Degenerate_Parabolic_Double-Phase_Systems}, we obtain a countable collection $\mathcal{G}$ of pairwise disjoint cylinders in $\mathcal{F}\coloneq \{\mathcal{Q}(w):w\in\Psi(\Lambda,r_1)\}$, where $\mathcal{G}$ satisfies the following two properties:
\begin{itemize}
    \item For each $\mathcal{Q}(z_1)\in\mathcal{F}$, there exists $\mathcal{Q}(z_2)\in\mathcal{G}$ such that
    \begin{equation}\label{cond : intersect intrinsic cylinder}
        \mathcal{Q}(z_1)\cap\mathcal{Q}(z_2)\neq \emptyset
    \end{equation}
    \item For the above points $z_1$ and $z_2$, we get
    \begin{equation}\label{cond : relationship of ell_z_1 and ell_z_2}
        \ell_{z_1}\leq 2\ell_{z_2},
    \end{equation}
    where
    $$
    \ell_{z_i}=\left\{
    \begin{aligned}
        &2\varrho_{z_i} &\text{if \eqref{case : p-phase} holds,}\\
        &2\varsigma_{z_i} &\text{if \eqref{case : p,q-phase} holds.}
    \end{aligned}\right.
    $$
\end{itemize}
Now, we claim that for the above points $z_1$ and $z_2$, there holds
\begin{equation}\label{cond : Vitali covering Q(z_1) and Q(z_2)}
    \mathcal{Q}(z_1)\subset \kappa\mathcal{Q}(z_2).
\end{equation}
We prove the claim by dividing it into the following four cases:
\begin{enumerate}[label=(\roman*)]
    \item\label{case : p-phases} $\mathcal{Q}(z_2)=Q_{\ell_{z_2}}^{\lambda_{z_2}}(z_2)\quad$ and $\quad\mathcal{Q}(z_1)=Q_{\ell_{z_1}}^{\lambda_{z_1}}(z_1)$,
    \item\label{case : p,q-phases} $\mathcal{Q}(z_2)=G_{\ell_{z_2}}^{\lambda_{z_2}}(z_2)\quad$ and $\quad\mathcal{Q}(z_1)=G_{\ell_{z_1}}^{\lambda_{z_1}}({z_1})$,
    \item\label{case : p,q-phase and p-phase} $\mathcal{Q}({z_2})=G_{\ell_{z_2}}^{\lambda_{z_2}}({z_2})\quad$ and $\quad\mathcal{Q}({z_1})=Q_{\ell_{z_1}}^{\lambda_{z_1}}({z_1})$,
    \item\label{case : p-phase and p,q-phase} $\mathcal{Q}({z_2})=Q_{\ell_{z_2}}^{\lambda_{z_2}}({z_2})\quad$ and $\quad\mathcal{Q}({z_1})=G_{\ell_{z_1}}^{\lambda_{z_1}}({z_1})$.
\end{enumerate}
Write $z_i=(x_i,t_i)$. The intersection condition and $\ell_{z_1}\leq2\ell_{z_2}$ imply
$$
|x_1-x_2|<3\ell_{z_2},\qquad |t_1-t_2|<5\ell_{z_2}^2,
$$
because both intrinsic time factors are at most one. In particular, $z_1\in Q_{3\ell_{z_2}}(z_2)$. They also imply that the spatial ball of $\mathcal Q(z_1)$ is contained in $B_{5\ell_{z_2}}(x_2)$.

We compare the parameters using the phase of the selected cylinder $\mathcal Q(z_2)$, whose radius is at least half that of $\mathcal Q(z_1)$. If it is a $p$-cylinder, then $3\ell_{z_2}=6\varrho_{z_2}<10\varrho_{z_2}$ and
$$
a(z_1)\lambda_{z_2}^q\leq K\lambda_{z_2}^p,
\qquad \Lambda=H_{z_2}(\lambda_{z_2})\leq(1+K)\lambda_{z_2}^p.
$$
The second inequality and $\lambda_{z_1}^p\leq\Lambda$ give
$\lambda_{z_1}\leq(1+K)^{1/p}\lambda_{z_2}$. If $\lambda_{z_2}\geq\lambda_{z_1}$, the first inequality gives
$$
\left(\frac{\lambda_{z_2}}{\lambda_{z_1}}\right)^p\Lambda
\leq H_{z_1}(\lambda_{z_2})
\leq(1+K)\lambda_{z_2}^p\leq(1+K)\Lambda,
$$
and hence the reverse comparison follows as well.

If $\mathcal Q(z_2)$ is a $(p,q)$-cylinder, then
$3\ell_{z_2}=6\varsigma_{z_2}<10\varrho_{z_2}$, so \eqref{cond : comparison in p,q-phase} gives
$a(z_2)/2\leq a(z_1)\leq2a(z_2)$. Thus $H_{z_1}$ and $H_{z_2}$ are comparable with factor two. Since both parameters have energy level $\Lambda$ and $H_{z_i}(ts)\geq t^pH_{z_i}(s)$ for $t\geq1$, their ratio is bounded in both directions by $2^{1/p}$.

Both cases imply the convenient common bound
\begin{equation}\label{eq : comparison condition of lambda_z_2 and _z_1}
    (4K)^{-1/p}\lambda_{z_1}\leq\lambda_{z_2}\leq(4K)^{1/p}\lambda_{z_1}.
\end{equation}
Here $\kappa\mathcal Q$ denotes dilation of the spatial radius by $\kappa$ and of the time half-length by $\kappa^2$, with the intrinsic parameter fixed.

To prove the claim, we write $z_1=(x_{1},t_{1})$ and $z_2=(x_{2},t_{2})$ for $x_{1},x_{2}\in\mr^n$ and $t_{1},t_{2}\in\mr$.

\ref{case : p-phases} For any $\tau\in I^{\lambda_{z_1}}_{\ell_{z_1}}(t_{1})$, by \eqref{cond : relationship of ell_z_1 and ell_z_2} and \eqref{eq : comparison condition of lambda_z_2 and _z_1}, we have
$$
\begin{aligned}
    |\tau-t_{2}|&\leq |\tau-t_{1}|+|t_{1}-t_{2}|\leq 2\lambda_{z_1}^{2-p}\ell_{z_1}^2+\lambda_{z_2}^{2-p}\ell_{z_2}^2\\
    &\leq (32K+1) \lambda_{z_2}^{2-p}\ell_{z_2}^2\leq \lambda_{z_2}^{2-p}(\kappa\ell_{z_2})^2.
\end{aligned}
$$
Thus, we get $I^{\lambda_{z_1}}_{\ell_{z_1}}(t_{1})\subset \kappa I^{\lambda_{z_2}}_{\ell_{z_2}}(t_{2})$ and hence $Q_{\ell_{z_1}}^{\lambda_{z_1}}(z_1)\subset \kappa Q_{\ell_{z_2}}^{\lambda_{z_2}}(z_2)$.

\ref{case : p,q-phases} For any $\tau\in J^{\lambda_{z_1}}_{\ell_{z_1}}(t_{1})$, we have
$$
|\tau-t_{2}|\leq |\tau-t_{1}|+|t_{1}-t_{2}|\leq 2\frac{\lambda_{z_1}^2}{H_{z_1}(\lambda_{z_1})}\ell_{z_1}^2+\frac{\lambda_{z_2}^2}{H_{z_2}(\lambda_{z_2})}\ell_{z_2}^2.
$$
We apply \eqref{cond : Lambda=H(lambda)} and \eqref{eq : comparison condition of lambda_z_2 and _z_1} to have
$$
\frac{\lambda_{z_1}^2}{H_{z_1}(\lambda_{z_1})}=\frac{\lambda_{z_1}^2}{\Lambda}\leq 4K\frac{\lambda_{z_2}^2}{\Lambda}=4K\frac{\lambda_{z_2}^2}{H_{z_2}(\lambda_{z_2})}.
$$
By \eqref{cond : relationship of ell_z_1 and ell_z_2}, we obtain
$$
|\tau-t_{2}|\leq (32K+1)\frac{\lambda_{z_2}^2}{H_{z_2}(\lambda_{z_2})}\ell_{z_2}^2\leq \frac{\lambda_{z_2}^2}{H_{z_2}(\lambda_{z_2})} (\kappa \ell_{z_2})^2.
$$
Thus, we have $J^{\lambda_{z_1}}_{\ell_{z_1}}(t_{1})\subset \kappa J^{\lambda_{z_2}}_{\ell_{z_2}}(t_{2})$ and hence $G_{\ell_{z_1}}^{\lambda_{z_1}}(z_1)\subset \kappa G_{\ell_{z_2}}^{\lambda_{z_2}}(z_2)$.

\ref{case : p,q-phase and p-phase} For any $\tau \in I^{\lambda_{z_1}}_{\ell_{z_1}}(t_{1})$, applying \eqref{cond : Lambda=H(lambda)}, we have
$$
|\tau-t_{2}|\leq |\tau-t_{1}|+|t_{1}-t_{2}|\leq 2\lambda_{z_1}^{2-p}\ell_{z_1}^2+\frac{\lambda_{z_2}^2}{H_{z_2}(\lambda_{z_2})}\ell_{z_2}^2=2\lambda_{z_1}^{2-p}\ell_{z_1}^2+\frac{\lambda_{z_2}^2}{\Lambda}\ell_{z_2}^2.
$$
Since $\displaystyle K\lambda_{z_1}^p\geq \sup_{Q_{10\varrho_{z_1}}(z_1)} a(\cdot)\lambda_{z_1}^q\geq a(z_1)\lambda_{z_1}^q$, by \eqref{cond : Lambda=H(lambda)} and \eqref{eq : comparison condition of lambda_z_2 and _z_1}, we get
$$
\lambda_{z_1}^{2-p}\leq \frac{2K\lambda_{z_1}^2}{\lambda_{z_1}^p+a(z_1)\lambda_{z_1}^q}\leq 8K^2\frac{\lambda_{z_2}^2}{\Lambda},
$$
and hence we see from \eqref{cond : relationship of ell_z_1 and ell_z_2} that
$$
|\tau-t_{2}|\leq (64K^2+1)\frac{\lambda_{z_2}^2}{\Lambda}\ell_{z_2}^2\leq \frac{\lambda_{z_2}^2}{\lambda_{z_2}^p+a(z_2)\lambda_{z_2}^q}(\kappa\ell_{z_2})^2.
$$
Thus, we get $I^{\lambda_{z_1}}_{\ell_{z_1}}(t_{1})\subset \kappa J^{\lambda_{z_2}}_{\ell_{z_2}}(t_{2})$ and hence $Q_{\ell_{z_1}}^{\lambda_{z_1}}(z_1)\subset \kappa G_{\ell_{z_2}}^{\lambda_{z_2}}(z_2)$.

\ref{case : p-phase and p,q-phase} For any $\tau\in J^{\lambda_{z_1}}_{\ell_{z_1}}(t_{1})$, using \eqref{cond : Lambda=H(lambda)} and \eqref{eq : comparison condition of lambda_z_2 and _z_1}, we obtain
$$
\begin{aligned}
    |\tau-t_{2}|&\leq |\tau-t_{1}|+|t_{1}-t_{2}|\leq 2\frac{\lambda_{z_1}^2}{H_{z_1}(\lambda_{z_1})}\ell_{z_1}^2+\lambda_{z_2}^{2-p}\ell_{z_2}^2\\
    &\leq 2\lambda_{z_1}^{2-p}\ell_{z_1}^2+\lambda_{z_2}^{2-p}\ell_{z_2}^2\leq (32K+1)\lambda_{z_2}^{2-p}\ell_{z_2}^2\\
    &\leq\lambda_{z_2}^{2-p}(\kappa\ell_{z_2})^2.
\end{aligned}
$$
Therefore, $J^{\lambda_{z_1}}_{\ell_{z_1}}(t_{1})\subset \kappa I^{\lambda_{z_2}}_{\ell_{z_2}}(t_{2})$ and $G_{\ell_{z_1}}^{\lambda_{z_1}}(z_1)\subset \kappa Q_{\ell_{z_2}}^{\lambda_{z_2}}(z_2)$. Thus, the claim is proved.

\subsection{\bf Proof of Theorem \ref{thm : main theorem with gamma}}\label{subsection 6.2}
Denote intrinsic cylinders in the countable pairwise disjoint collection $\mathcal{G}$ by, for $j=1,2,\cdots,$
$$
\mathcal{Q}_j \equiv \mathcal{Q}_j(z_j),
$$
where $z_j \in \Psi(\Lambda,r_1)$. Set $f(z)=H(z,|Du(z)|)$ and fix
$$
\theta=\frac{1+\theta_*}{2},\qquad
\theta_*=\max\left\{\frac n{n+2},\frac{q-1}{p}\right\}<1.
$$
This exponent is admissible in both phases. Applying Lemmas \ref{lem : reverse Holder inequality for p phase} and \ref{lem : reverse Holder inequality for p,q phase}, with the dilation factor absorbed into $c=c(\operatorname{data}_\gamma)>1$, gives
$$
\iints{\kappa\mathcal Q_j}f\,dz
\leq c\Lambda^{1-\theta}\iints{\mathcal Q_j\cap\{f>\Lambda/c\}}f^\theta\,dz.
$$
These are ordinary integrals, so that they can be summed over the pairwise disjoint cylinders $\mathcal Q_j$. The dilated cylinders cover $\Psi(\Lambda,r_1)$ up to a null set and are contained in $Q_{r_2}(z_0)$. Therefore,
\begin{equation}\label{eq : superlevel integral estimate}
\iints{Q_{r_1}(z_0)\cap\{f>\Lambda\}}f\,dz
\leq c\Lambda^{1-\theta}\iints{Q_{r_2}(z_0)\cap\{f>\Lambda/c\}}f^\theta\,dz
\end{equation}
whenever
$$
\Lambda>L_{12}:=\left(\frac{4\kappa r}{r_2-r_1}\right)^\beta\Lambda_0,
\qquad \beta=\frac{q(n+2)}2.
$$

To avoid assuming higher integrability in advance, let $N>0$, $f_N=\min\{f,N\}$, and
$$
F_N(s)=\iints{Q_s(z_0)}f f_N^\varepsilon\,dz,
\qquad 0<\varepsilon\leq1.
$$
This is finite because $f\in L^1$. Multiplying \eqref{eq : superlevel integral estimate} by $\varepsilon\Lambda^{\varepsilon-1}$, integrating from $L_{12}$ to $N$ when $N>L_{12}$, and using Tonelli's theorem yields
$$
F_N(r_1)\leq L_{12}^\varepsilon\iints{Q_{2r}(z_0)}f\,dz
+\frac{C\varepsilon}{1-\theta}F_N(r_2),
$$
with $C=C(\operatorname{data}_\gamma)$ independent of $N$ and $\varepsilon\in(0,1]$. Indeed, the inner integral on the right is bounded by
$$
\frac{\min\{N,cf\}^{1+\varepsilon-\theta}}{1+\varepsilon-\theta},
\qquad
f^\theta\min\{N,cf\}^{1+\varepsilon-\theta}
\leq c^{1+\varepsilon-\theta}f f_N^\varepsilon.
$$
If $N\leq L_{12}$, the same estimate follows directly from $f_N\leq L_{12}$. Choose $\varepsilon_0\leq1$ so that $C\varepsilon_0/(1-\theta)\leq1/2$. Since
$L_{12}^\varepsilon\leq(4\kappa r/(r_2-r_1))^\beta\Lambda_0^\varepsilon$, Lemma \ref{lem : a standard iteration lemma}, applied to the bounded function $F_N$ on $[r,2r]$, gives
$$
F_N(r)\leq c\Lambda_0^\varepsilon\iints{Q_{2r}(z_0)}f\,dz.
$$
Letting $N\to\infty$ by monotone convergence and dividing by $|Q_r(z_0)|$, we obtain
\begin{equation}\label{eq : H estimate in Q_r to Q_2r}
    \miint{Q_r(z_0)}H(z,|Du|)^{1+\varepsilon}\,dz
    \leq c\Lambda_0^\varepsilon\miint{Q_{2r}(z_0)}H(z,|Du|)\,dz
\end{equation}
for every $\varepsilon\in(0,\varepsilon_0)$, where both $c$ and $\varepsilon_0$ depend only on $\operatorname{data}_\gamma$.
Since $\lambda_0\geq 1$ and $2\leq p<q$, $\Lambda_0^\varepsilon\leq c \lambda_0^{\varepsilon q}$ for some $c=c(\operatorname{data}_\gamma,\|a\|_{L^\infty(\Omega_T)})$. Then the definition of $\lambda_0$ implies that
\begin{equation}\label{eq : Lambda_0 estimate}
    \Lambda_0^\varepsilon\miint{Q_{2r}(z_0)} H(z,|Du|)\, dz\leq c\left(\miint{Q_{2r}(z_0)} H(z,|Du|)\, dz\right)^{1+\frac{\varepsilon q}{2}}+c.
\end{equation}
Combining \eqref{eq : H estimate in Q_r to Q_2r} and \eqref{eq : Lambda_0 estimate} proves Theorem \ref{thm : main theorem with gamma}. \qquad \qquad \qquad \qquad \qquad \qquad $\Box$

\vspace{0.5cm}
\noindent\textbf{Acknowledgments.} This work is supported by the National Research Foundation of Korea (NRF) grant funded by the Korea government [Grant Nos. RS-2025-00555316 and RS-2025-25415411].


\begin{thebibliography}{10}

\bibitem{Acerbi2004}
E.~Acerbi, G.~Mingione, and G.~A. Seregin.
\newblock Regularity results for parabolic systems related to a class of
  non-{N}ewtonian fluids.
\newblock {\em Ann. Inst. H. Poincar\'{e} C Anal. Non Lin\'{e}aire},
  21(1):25--60, 2004.

\bibitem{Arora2023}
R.~Arora and S.~Shmarev.
\newblock Double-phase parabolic equations with variable growth and nonlinear
  sources.
\newblock {\em Adv. Nonlinear Anal.}, 12(1):304--335, 2023.

\bibitem{Baasandorj2020}
S.~Baasandorj, S.-S. Byun, and J.~Oh.
\newblock Calder\'{o}n-{Z}ygmund estimates for generalized double phase
  problems.
\newblock {\em J. Funct. Anal.}, 279(7):108670, 57, 2020.

\bibitem{Bahrouni2019}
A.~Bahrouni, V.~D. R{\u{a}}dulescu, and D.~D. Repov{\v{s}}.
\newblock Double phase transonic flow problems with variable growth: nonlinear
  patterns and stationary waves.
\newblock {\em Nonlinearity}, 32(7):2481--2495, 2019.

\bibitem{Baroni2015}
P.~Baroni, M.~Colombo, and G.~Mingione.
\newblock Harnack inequalities for double phase functionals.
\newblock {\em Nonlinear Anal.}, 121:206--222, 2015.

\bibitem{Baroni2018}
P.~Baroni, M.~Colombo, and G.~Mingione.
\newblock Regularity for general functionals with double phase.
\newblock {\em Calc. Var. Partial Differential Equations}, 57(2):Paper No. 62,
  48, 2018.

\bibitem{Benci2000}
V.~Benci, P.~D'Avenia, D.~Fortunato, and L.~Pisani.
\newblock Solitons in several space dimensions: {D}errick's problem and
  infinitely many solutions.
\newblock {\em Arch. Ration. Mech. Anal.}, 154(4):297--324, 2000.

\bibitem{Buryachenko2022}
K.~O. Buryachenko and I.~I. Skrypnik.
\newblock Local continuity and {H}arnack's inequality for double-phase
  parabolic equations.
\newblock {\em Potential Anal.}, 56(1):137--164, 2022.

\bibitem{Byun2021a}
S.-S. Byun and H.-S. Lee.
\newblock Calder\'{o}n-{Z}ygmund estimates for elliptic double phase problems
  with variable exponents.
\newblock {\em J. Math. Anal. Appl.}, 501(1):Paper No. 124015, 31, 2021.

\bibitem{Byun2021}
S.-S. Byun and H.-S. Lee.
\newblock Gradient estimates of {$\omega$}-minimizers to double phase
  variational problems with variable exponents.
\newblock {\em Q. J. Math.}, 72(4):1191--1221, 2021.

\bibitem{Byun2017}
S.-S. Byun and J.~Oh.
\newblock Global gradient estimates for non-uniformly elliptic equations.
\newblock {\em Calc. Var. Partial Differential Equations}, 56(2):Paper No. 46,
  36, 2017.

\bibitem{Byun2020}
S.-S. Byun and J.~Oh.
\newblock Regularity results for generalized double phase functionals.
\newblock {\em Anal. PDE}, 13(5):1269--1300, 2020.

\bibitem{Charkaoui2024}
A.~Charkaoui, A.~Ben-Loghfyry, and S.~Zeng.
\newblock Nonlinear parabolic double phase variable exponent systems with
  applications in image noise removal.
\newblock {\em Appl. Math. Model.}, 132:495--530, 2024.

\bibitem{Chen2006}
Y.~Chen, S.~Levine, and M.~Rao.
\newblock Variable exponent, linear growth functionals in image restoration.
\newblock {\em SIAM J. Appl. Math.}, 66(4):1383--1406, 2006.

\bibitem{cherfils2005stationary}
L.~Cherfils and Y.~Il'yasov.
\newblock On the stationary solutions of generalized reaction diffusion
  equations with $p\,$\&$\,q$-{L}aplacian.
\newblock {\em Commun. Pure Appl. Anal.}, 4(1):9--22, 2005.

\bibitem{Chlebicka2025}
I.~Chlebicka, P.~Garain, and W.~Kim.
\newblock Gradient higher integrability of bounded solutions to parabolic
  double-phase systems.
\newblock arXiv preprint arXiv:2512.11294, 2025.

\bibitem{Chlebicks2019}
I.~Chlebicka, P.~Gwiazda, and A.~Zatorska-Goldstein.
\newblock Parabolic equation in time and space dependent anisotropic
  {M}usielak-{O}rlicz spaces in absence of {L}avrentiev's phenomenon.
\newblock {\em Ann. Inst. H. Poincar\'{e} C Anal. Non Lin\'{e}aire},
  36(5):1431--1465, 2019.

\bibitem{Colombo2015a}
M.~Colombo and G.~Mingione.
\newblock Bounded minimisers of double phase variational integrals.
\newblock {\em Arch. Ration. Mech. Anal.}, 218(1):219--273, 2015.

\bibitem{Colombo2015}
M.~Colombo and G.~Mingione.
\newblock Regularity for double phase variational problems.
\newblock {\em Arch. Ration. Mech. Anal.}, 215(2):443--496, 2015.

\bibitem{Colombo2016}
M.~Colombo and G.~Mingione.
\newblock Calder\'{o}n-{Z}ygmund estimates and non-uniformly elliptic
  operators.
\newblock {\em J. Funct. Anal.}, 270(4):1416--1478, 2016.

\bibitem{DeFilippis2019}
C.~De~Filippis and G.~Mingione.
\newblock A borderline case of {C}alder\'{o}n-{Z}ygmund estimates for
  nonuniformly elliptic problems.
\newblock {\em St. Petersburg Math. J.}, 31(3):455--477, 2020.

\bibitem{1993_Degenerate_parabolic_equations_DiBenedetto}
E.~DiBenedetto.
\newblock {\em Degenerate parabolic equations}.
\newblock Universitext. Springer-Verlag, New York, 1993.

\bibitem{Esposito2004}
L.~Esposito, F.~Leonetti, and G.~Mingione.
\newblock Sharp regularity for functionals with {$(p,q)$} growth.
\newblock {\em J. Differential Equations}, 204(1):5--55, 2004.

\bibitem{2003_Giusti_Direct_methods_in_the_calculus_of_variations}
E.~Giusti.
\newblock {\em Direct methods in the calculus of variations}.
\newblock World Scientific Publishing Co., Inc., River Edge, NJ, 2003.

\bibitem{Harjulehto2021}
P.~Harjulehto and P.~H\"ast\"o.
\newblock Double phase image restoration.
\newblock {\em J. Math. Anal. Appl.}, 501(1):Paper No. 123832, 12, 2021.

\bibitem{Harjulehto2013}
P.~Harjulehto, P.~H\"ast\"o, V.~Latvala, and O.~Toivanen.
\newblock Critical variable exponent functionals in image restoration.
\newblock {\em Appl. Math. Lett.}, 26(1):56--60, 2013.

\bibitem{Hasto_2021}
P.~H\"{a}st\"{o} and J.~Ok.
\newblock Higher integrability for parabolic systems with {O}rlicz growth.
\newblock {\em J. Differential Equations}, 300:925--948, 2021.

\bibitem{Haestoe2022a}
P.~H\"{a}st\"{o} and J.~Ok.
\newblock Maximal regularity for local minimizers of non-autonomous
  functionals.
\newblock {\em J. Eur. Math. Soc. (JEMS)}, 24(4):1285--1334, 2022.

\bibitem{Haestoe2022}
P.~H\"{a}st\"{o} and J.~Ok.
\newblock Regularity theory for non-autonomous partial differential equations
  without {U}hlenbeck structure.
\newblock {\em Arch. Ration. Mech. Anal.}, 245(3):1401--1436, 2022.

\bibitem{Kbiri2014}
M.~Kbiri~Alaoui, T.~Nabil, and M.~Altanji.
\newblock On some new non-linear diffusion models for the image filtering.
\newblock {\em Appl. Anal.}, 93(2):269--280, 2014.

\bibitem{Kim2024}
B.~Kim and J.~Oh.
\newblock Higher integrability for weak solutions to parabolic multi-phase
  equations.
\newblock {\em J. Differential Equations}, 409:223--298, 2024.

\bibitem{KimOh2026}
B.~Kim and J.~Oh.
\newblock Bounded solutions and interpolative gap bounds for degenerate
  parabolic double phase problems.
\newblock arXiv preprint arXiv:2511.13454, 2025.

\bibitem{Kim2025}
B.~Kim, J.~Oh, and A.~Sen.
\newblock Parabolic {L}ipschitz truncation for multi-phase problems: the
  degenerate case.
\newblock {\em Adv. Calc. Var.}, 18(3):979--1010, 2025.

\bibitem{Wontae2023b}
W.~Kim.
\newblock Calder\'on-{Z}ygmund type estimate for the singular parabolic
  double-phase system.
\newblock {\em J. Math. Anal. Appl.}, 551(1):Paper No. 129593, 33, 2025.

\bibitem{2023_Gradient_Higher_Integrability_for_Degenerate_Parabolic_Double-Phase_Systems}
W.~Kim, J.~Kinnunen, and K.~Moring.
\newblock Gradient higher integrability for degenerate parabolic double-phase
  systems.
\newblock {\em Arch. Ration. Mech. Anal.}, 247(5):Paper No. 79, 46, 2023.

\bibitem{Wontae2023a}
W.~Kim, J.~Kinnunen, and L.~S\"arki\"o.
\newblock Lipschitz truncation method for parabolic double-phase systems and
  applications.
\newblock {\em J. Funct. Anal.}, 288(3):Paper No. 110738, 60, 2025.

\bibitem{Wontae2025}
W.~Kim, K.~Moring, and L.~S\"arki\"o.
\newblock H\"older regularity for degenerate parabolic double-phase equations.
\newblock {\em J. Differential Equations}, 434:Paper No. 113231, 34, 2025.

\bibitem{Wontae2024}
W.~Kim and L.~S\"arki\"o.
\newblock Gradient higher integrability for singular parabolic double-phase
  systems.
\newblock {\em NoDEA Nonlinear Differential Equations Appl.}, 31(3):Paper No.
  40, 38, 2024.

\bibitem{Kinnunen2000}
J.~Kinnunen and J.~L. Lewis.
\newblock Higher integrability for parabolic systems of {$p$}-{L}aplacian type.
\newblock {\em Duke Math. J.}, 102(2):253--271, 2000.

\bibitem{Fang2010}
F.~Li, Z.~Li, and L.~Pi.
\newblock Variable exponent functionals in image restoration.
\newblock {\em Appl. Math. Comput.}, 216(3):870--882, 2010.

\bibitem{Marcellini1989}
P.~Marcellini.
\newblock Regularity of minimizers of integrals of the calculus of variations
  with nonstandard growth conditions.
\newblock {\em Arch. Ration. Mech. Anal.}, 105(3):267--284, 1989.

\bibitem{Marcellini1991}
P.~Marcellini.
\newblock Regularity and existence of solutions of elliptic equations with
  {$p,q$}-growth conditions.
\newblock {\em J. Differential Equations}, 90(1):1--30, 1991.

\bibitem{Mingione2021}
G.~Mingione and V.~R\v{a}dulescu.
\newblock Recent developments in problems with nonstandard growth and
  nonuniform ellipticity.
\newblock {\em J. Math. Anal. Appl.}, 501(1):Paper No. 125197, 41, 2021.

\bibitem{Ok2017}
J.~Ok.
\newblock Regularity of {$\omega$}-minimizers for a class of functionals with
  non-standard growth.
\newblock {\em Calc. Var. Partial Differential Equations}, 56(2):Paper No. 48,
  31, 2017.

\bibitem{Ok2020}
J.~Ok.
\newblock Regularity for double phase problems under additional integrability
  assumptions.
\newblock {\em Nonlinear Anal.}, 194:111408, 13, 2020.

\bibitem{Sen2025}
A.~Sen.
\newblock Gradient higher integrability for degenerate/singular parabolic
  multi-phase problems.
\newblock {\em J. Geom. Anal.}, 35(6):Paper No. 170, 95, 2025.

\bibitem{Singer2016}
T.~Singer.
\newblock Existence of weak solutions of parabolic systems with {$p,
  q$}-growth.
\newblock {\em Manuscripta Math.}, 151(1-2):87--112, 2016.

\bibitem{Zhikov1986}
V.~V. Zhikov.
\newblock Averaging of functionals of the calculus of variations and elasticity
  theory.
\newblock {\em Izv. Akad. Nauk SSSR Ser. Mat.}, 50(4):675--710, 877, 1986.

\bibitem{Zhikov1993}
V.~V. Zhikov.
\newblock Lavrentiev phenomenon and homogenization for some variational
  problems.
\newblock {\em C. R. Acad. Sci. Paris S\'{e}r. I Math.}, 316(5):435--439, 1993.

\bibitem{Zhikov1995}
V.~V. Zhikov.
\newblock On {L}avrentiev's phenomenon.
\newblock {\em Russian J. Math. Phys.}, 3(2):249--269, 1995.

\bibitem{Zhikov1997}
V.~V. Zhikov.
\newblock On some variational problems.
\newblock {\em Russian J. Math. Phys.}, 5(1):105--116 (1998), 1997.

\end{thebibliography}
\end{document}